\newtheorem{theorem}{Theorem}
\newtheorem{remark}{Remark}
\newtheorem{lemma}{Lemma}
\newtheorem{corollary}{Corollary}
\newtheorem{definition}{Definition}
\newcommand{\diag}{\mathop{\rm diag}\nolimits}
\newcommand{\rr}{{\mathbb R}}
\newcommand{\ba}[1]{\begin{array}{#1}}
\newcommand{\ea}{\end{array}}
\newcommand*{\pdot}{\mathbin{\scalerel*{\boldsymbol\odot}{\circ}}}
\begin{document}
\title{\LARGE \bf
A Quadratic Programming Algorithm with $O(n^3)$ Time Complexity}
\author{Liang Wu$^{1}$
and Richard D. Braatz$^{1}$%
\thanks{$^{1}$Massachusetts Institute of Technology, Cambridge, MA 02139, USA, {\tt\small \{liangwu,braatz\}@mit.edu}.
}
}
\maketitle

\begin{abstract}
Solving linear systems and quadratic programming (QP) problems are both ubiquitous tasks in the engineering and computing fields. Direct methods for solving linear systems, such as Cholesky, LU, and QR factorizations, exhibit \textit{data-independent} time complexity of $O(n^3)$. This raises a natural question: could there exist algorithms for solving QPs that also achieve \textit{data-independent} time complexity of $O(n^3)$? This is critical for offering an execution time certificate for real-time optimization-based applications such as model predictive control. This article first demonstrates that solving real-time strictly convex QPs, Lasso problems, and support vector machine problems can be turned into solving box-constrained QPs (Box-QPs), which support a cost-free initialization strategy for feasible interior-point methods (IPMs). Next, focusing on solving Box-QPs, this article replaces the \textit{exact Newton step} with an \textit{approximated Newton step} (substituting the matrix-inversion operation with multiple rank-1 updates) within feasible IPMs. For the first time, this article proposes an \textit{implementable} feasible IPM algorithm with $O(n^3)$ time complexity, by proving the number of iterations is exact $O(\sqrt{n})$ and the number of rank-1 updates is bounded by $O(n)$. Numerical validations/applications and codes are provided.
\end{abstract}

\begin{IEEEkeywords}
Iteration complexity, time complexity, execution time certificate, quadratic programming, Lasso problem.
\end{IEEEkeywords}

\section{Introduction}
Similar to solving linear systems, quadratic programming (QP) is ubiquitous and widely applied across diverse fields such as machine learning, finance, operations, energy, transportation, computer vision, signal processing, bioinformatics, robotics, and control. In particular, QP-based control approaches such as model predictive control (MPC) \cite{borrelli2017predictive,wu2023simple}, moving horizon estimation \cite{kuhl2011real}, CLF-CBF based QPs \cite{ames2016control}, QP-based minimum snap trajectory generation \cite{mellinger2011minimum} for quadrotor maneuvering, and QP-based task-space control paradigms \cite{escande2014hierarchical,bouyarmane2018quadratic} for robotics, are classified \textit{online}, since they must be executed in \textit{real-time}, in contrast to the \textit{offline} QPs, which have no such requirement. 

In such real-time scenarios, QP algorithms are required to provide an execution time certificate (on time-predictable computer architectures \cite{schoeberl2009time}), ensuring that a solution can be returned within a specified sampling time.  An execution time certificate was recently provided in the analog optimization paradigm   \cite{wu2025arbitrarily}. Still, finding such a certificate remains an open problem in the mainstream numerical optimization paradigm, as for numerically \textit{iterative} optimization algorithms; certifying the number of iterations and the number of operations needed to solve real-time QPs with time-varying Hessian matrix data is inherently difficult \cite{richter2011computational, giselsson2012execution, patrinos2013accelerated, cimini2017exact, arnstrom2019exact, cimini2019complexity,arnstrom2020complexity, arnstrom2021unifying, okawa2021linear}.  References \cite{richter2011computational,giselsson2012execution} and \cite{cimini2017exact, cimini2019complexity, arnstrom2019exact, arnstrom2020complexity, arnstrom2021unifying, okawa2021linear} adopt first-order methods and active-set based methods to provide worst-case iteration complexity analysis, respectively, which are dependent on the Hessian matrix of QPs.

An execution time certificate is available when solving linear systems numerically, as there exist \textit{direct} methods such as Cholesky factorization, LU factorization, and QR factorization. Those factorization methods can be viewed as \textit{iterative} algorithms in which the number of iterations is \textit{exactly} 
the problem dimension $n$ (\textit{data-independent}) with certified $O(n^3)$ time complexity. 
In this perspective, our previous box-constrained QP (Box-QP) algorithm \cite{wu2023direct}, whose iteration complexity is the \textit{exact} and \textit{data-independent} (or \textit{only dimension-dependent}) value of $
\!\left\lceil\frac{\log(\frac{2n}{\epsilon})}{-2\log(\frac{\sqrt{2n}}{\sqrt{2n}+\sqrt{2}-1})}\right\rceil \! + 1
$ (where $n$ and $\epsilon$ denote the problem dimension and the predefined optimality level, respectively), rather than an upper bound as provided in previous works, can be regarded as being \textit{direct}. The \textit{direct} Box-QP algorithm is then used to offer an execution time certificate for real-time MPC applications \cite{wu2024time,wu2024execution} and extended to handle general QPs via the $\ell_1$-penalty soft-constrained scheme \cite{wu2024parallel}. The \textit{direct} Box-QP algorithm is based on feasible interior-point methods (IPMs), which offer certified $O(\sqrt{n})$ iteration complexity (better than the usual $O(n)$ iteration complexity of infeasible IPMs \cite{roos2006full}). Feasible and infeasible IPMs differ in whether the initial point is feasible or not, and Ref.\ \cite{wu2023direct} exploited the structure of Box-QP to propose a cost-free initialization strategy. However, solving general convex QPs (including linear programming, LP), which are possibly infeasible, requires detecting infeasibility as well with no known way of finding a strictly feasible point in a cost-free manner. 
To address this issue, our previous work \cite{wu2025eiqp} proposed a novel infeasible IPM and proved its iteration complexity to be $O(\sqrt{n})$, with the \textit{exact} and \textit{data-independent} value $\!\left\lceil\frac{\log(\frac{n+1}{\epsilon})}{-\log(1-\frac{0.414213}{\sqrt{n+1}})}\right\rceil$.

Although the algorithms of \cite{wu2023direct} and \cite{wu2025eiqp} for solving QPs can be regarded as being \textit{direct} (thanks to their \textit{exact} and \textit{data-independent} iteration complexity, thus analogous to \textit{direct} methods in solving linear systems), their time complexity is $O(n^{3.5})$ (resulting from $O(\sqrt{n})$ iteration complexity multiplied by the $O(n^3)$ time complexity of solving a linear system in each iteration of IPMs), which is larger than the time complexity of \textit{direct} methods in solving linear systems. Moreover, there is a well-known $O(\log(n))$ iteration complexity conjecture in the field of IPM algorithms, which has remained unproven for over four decades. This conjecture stems from the empirical observation that heuristic IPM algorithms typically exhibit $O(\log(n))$ iteration complexity in practice. If the $O(\log(n))$ conjecture were proved, the time complexity of IPM algorithms would be $O(\log(n)\times n^3)$, which is close to the $O(n^3)$ iteration complexity of \textit{direct} methods in solving linear systems.

This article proposes a \textit{direct} IPM-based QP algorithm with a certified \textit{exact} and \textit{data-independent} number of iterations, while achieving the same order of time complexity, $O(n^3)$, as \textit{direct} methods in solving linear systems.

\subsection{Related work}
The development of $O(n^3)$ IPM algorithms dates back to the 1980s. The core idea behind these algorithms is that, while the number of iterations remains at the proven $O(\sqrt{n})$ level, the costly matrix inversion at each iteration is replaced by an approximation using multiple rank-1 updates, a technique originally introduced in \cite{karmarkar1984new}. The first major breakthrough came from \cite{vaidya1987algorithm} and \cite{gonzaga1989algorithm}, who independently proved that the total number of such rank-1 updates is bounded by $O(n)$, resulting in an overall $O(n^3L)$ time complexity (where $L$ is bounded by the number of bits in the input) for solving LP. Other $O(n^3L)$ IPM-based algorithms for solving LP were proposed in \cite{roos1989,ye1991n,mizuno1991nrhol,anstreicher1992long,bosch1993mizuno,wu1994rank,anstreicher1999linear}. The same technique, which uses multiple rank-1 updates to approximate the matrix inversion within the IPM framework, was further extended to solve convex QPs \cite{monteiro1989interiorII,goldfarb1990n,tiande1996properties} and linear complementary problems \cite{kojima1989polynomial,mizuno1990n,mizuno1991n,mizuno1992new,mcshane1996superlinear} to obtain $O(n^3L)$ time complexity, respectively. 

However, all the aforementioned $O(n^3L)$ IPM-based algorithms are not practically \textit{implementable}, and none of the works \cite{roos1989,ye1991n,mizuno1991nrhol,anstreicher1992long,bosch1993mizuno,wu1994rank,anstreicher1999linear,monteiro1989interiorII,goldfarb1990n,tiande1996properties,kojima1989polynomial,mizuno1990n,mizuno1991n,mizuno1992new,mcshane1996superlinear} provide numerical experiments. This is because all of them are based on feasible IPM frameworks and assume that a strictly feasible point is given. Finding a strictly feasible point in feasible IPM frameworks generally requires solving another LP. Moreover, none of their time complexity results is \textit{data-independent} (the results explicitly depend on $L$), and therefore can not provide an execution time certificate for real-time optimization-based applications.

\subsection{Contributions}
This article begins by addressing a Box-QP, as it supports finding a strictly feasible initial point for feasible IPMs as demonstrated in our previous work \cite{wu2023direct}. It then demonstrates how to transform a general strictly convex QP, a Lasso problem, and a support vector machine (SVM) (or regression) into a Box-QP, highlighting the broad applicability. The novel contributions of this article are to
\begin{itemize}
    \item[1)] develop a unified and simple proof framework for feasible IPM algorithms with \textit{exact Newton step} and \textit{approximated Newton step}, respectively resulting in $O(n^{3.5})$ and $O(n^3)$ time complexity;
    \item[2)] prove that the proposed $O(n^3)$-time-complexity algorithm has the \textit{exact} and \textit{data-independent} number of iterations   
    \[
\mathcal{N}_\mathrm{iter}= \!\left\lceil\frac{\log\!\left(\frac{2n+\alpha\sqrt{2n}}{\epsilon}\right)}{-\log\!\left(1-\frac{\beta}{\sqrt{2n}}\right)}\right\rceil
    \]
    and the \textit{data-independent} total number of rank-1 updates bounded by
    \[
    \mathcal{N}_{\mathrm{rank}-1}\leq \! \left\lceil\frac{4\eta(\mathcal{N}_\mathrm{iter}-1)\sqrt{n}}{\left(1-\eta\right)\log(1+\delta)}\right\rceil.
    \]
     (where $\alpha,\beta,\eta,\delta$ are \textit{data-independent} constants), thereby being able to offer an execution time certificate for real-time optimization-based applications;
    \item[3)] show that the proposed $O(n^3)$-time-complexity algorithm is \textit{simple to implement} and
    \textit{matrix-free} in the iterative procedures.
\end{itemize}

To the best of the author's knowledge, this article is the first to propose an \textit{implementable} feasible IPM algorithm with $O(n^3)$ time complexity in solving QPs.

\subsection{Notation}
$\mathbb{R}^n$ denotes the space of $n$-dimensional real vectors, $\mathbb{R}^n_{++}$ is the set of all positive vectors of $\mathbb{R}^n$, and $\mathbb{N}_+$ is the set of positive integers. Given a vector $x$ and a matrix $X$, $x_i$ denotes the $i$th element of $x$, $X_{i,i}$ denotes the element in the $i$th row and $i$th column of $X$, and $X_{\cdot,i}$ denotes the $i$th column of $X$. The vector of all ones is denoted by $e=(1,\cdots{},1)^\top$. Given a set $I$, the notation $|I|$ denotes the number of elements in set $I$. $\left\lceil x\right\rceil$ maps $x$ to the least integer greater than or equal to $x$. For $z,y\in\mathbb{R}^n$, let $\mathrm{col}(z,y)=[z^{\top},y^{\top}]^{\top}$. Given two arbitrary vectors $z,y \in\mathbb{R}^n_{++}$, their Hadamard product is $z\pdot y = (z_1y_1,z_2y_2,\cdots{},z_ny_n)^\top$, $\frac{z}{y}=\big(\frac{z_1}{y_1},\frac{z_2}{y_2},\cdots{},\frac{z_n}{y_n}
\big)^{\!\top}$. 

\section{Problem Formulations}\label{sec_BoxQP}
This article starts from a convex Box-QP,
\begin{equation}\label{eqn_box_QP}
    \begin{aligned}
        \min_{z\in\rr^{n}}&~ \frac{1}{2}z^\top H z + z^\top h\\
        \mathrm{s.t.}&~ -e \leq z \leq e
    \end{aligned}
\end{equation}
where $H=H^\top\in\rr^{n\times n}\succeq0$. The general box constraint $l\leq z\leq u$ ($l<u$) can be easily transformed into the scaled box constraint $-e\leq z\leq e$. In fact, many practical optimization problems, such as general strictly convex QPs, non-smooth Lasso problems \cite{tibshirani1996regression}, and support vector machine (SVM) \cite{cortes1995support} (or regression \cite{smola2004tutorial}) (see Appendix \ref{sec_SVM_BoxQP}), can be transformed into Box-QP. 

Transforming those problems into Box-QPs is the key to deriving our proposed $O(n^3)$ QP algorithm, because the special structure of Box-QP \eqref{eqn_box_QP} has no cost in finding a strictly feasible initial point for feasible IPMs, as shown in our prior work \cite{wu2023direct}. 

\subsection{Strictly convex QP to Box-QP via $\ell_1$ penalty}\label{sec_ell_1_penalty}
Given a standard strictly convex QP,
\begin{equation}\label{eqn_standard_QP}
    \begin{aligned}
\min_{y\in\rr^{n_y}}&~ \frac{1}{2}y^\top Q y + y^\top q\\
        \mathrm{s.t.}&~ Gy\leq g
    \end{aligned}
\end{equation}
where $Q\in\rr^{n_y\times n_y}\succ0$, $q\in\rr^{n_y}$, $G\in\rr^{n_g\times n_y}$, and $g\in\rr^{n_g}$. QP \eqref{eqn_standard_QP} often arises in real-time optimization-based applications, such as Model Predictive Control (MPC). In MPC scenarios, the resulting QP \eqref{eqn_standard_QP} can be infeasible, that is, the set $\{y\in\rr^{n_y}: Gy\leq g\}$ is empty. A common practical approach is to use soft-constrained formulations. Our previous work \cite{wu2024parallel} uses the exact $\ell_1$-penalty soft-constraint scheme for QP \eqref{eqn_standard_QP},
\begin{equation}\label{eqn_l1_peantly_QP}
        \min_y~\frac{1}{2}y^\top Q y + y^\top q + \|\rho\pdot\max (0,Gy-g)\|_1
\end{equation}
(where $\rho\in\rr_{++}^{n_g}$ denotes the penalty weight vector) to equivalently transform \eqref{eqn_standard_QP} into Box-QP \eqref{eqn_box_QP}.
\begin{lemma}(see \cite[Col.\ 2]{wu2024parallel})
    The $\ell_1$-penalty soft-constrained QP \eqref{eqn_l1_peantly_QP} has a unique solution $y^*$, which can be recovered by the optimal solution $z^*$ of Box-QP \eqref{eqn_box_QP} with
    \begin{equation}
        y^* = -Q^{-1}\!\left(q+\frac{1}{2} G^\top\!\left(\rho\pdot z^* + \rho\right)\right)
    \end{equation}
    if the data of Box-QP \eqref{eqn_box_QP} are assigned as 
    \[
    \begin{aligned}
    &H\leftarrow \diag(\rho)GQ^{-1}G^\top\diag(\rho),\\
    &h\leftarrow \diag(\rho)\left(GQ^{-1}G^\top\rho+2(GQ^{-1}q+g)\right).
    \end{aligned}
    \]
\end{lemma}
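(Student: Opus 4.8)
The plan is to recognize \eqref{eqn_l1_peantly_QP} as the primal of a convex--concave saddle-point problem whose dual, after one affine change of variables, is precisely Box-QP \eqref{eqn_box_QP}; the recovery map $z^*\mapsto y^*$ then falls out of the stationarity condition of the inner minimization over $y$, and uniqueness of $y^*$ follows from strict convexity.

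First I would replace the nonsmooth penalty by its variational (dual) form. Coordinatewise, $\rho_i\max(0,t_i)=\max_{0\le\lambda_i\le1}\rho_i\lambda_i t_i$, so $\|\rho\pdot\max(0,Gy-g)\|_1=\max_{0\le\lambda\le e}\lambda^\top\diag(\rho)(Gy-g)$, and \eqref{eqn_l1_peantly_QP} becomes $\min_y\max_{0\le\lambda\le e}\Phi(y,\lambda)$ with $\Phi(y,\lambda)=\tfrac12 y^\top Qy+y^\top q+\lambda^\top\diag(\rho)(Gy-g)$. Since $\Phi$ is continuous, strictly convex in $y$ (as $Q\succ0$), and affine, hence concave, in $\lambda$ over the convex compact box $\{0\le\lambda\le e\}$, Sion's minimax theorem lets me exchange $\min$ and $\max$; moreover $\psi(y):=\max_{0\le\lambda\le e}\Phi(y,\lambda)$ dominates the coercive quadratic $\tfrac12 y^\top Qy+y^\top q$, so $\psi$ is coercive and strictly convex, \eqref{eqn_l1_peantly_QP} has a unique minimizer $y^*$, and the dual value is attained at some maximizer $\lambda^*$. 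Equivalently, and avoiding minimax altogether, one may rewrite \eqref{eqn_l1_peantly_QP} as the affinely constrained QP $\min_{y,s}\{\tfrac12 y^\top Qy+y^\top q+\rho^\top s:\ s\ge0,\ s\ge Gy-g\}$ and invoke Lagrangian strong duality, the multiplier of $s\ge Gy-g$ playing the role of $\diag(\rho)\lambda$ and the $s\ge0$ constraint forcing that multiplier into the box $0\le\lambda\le e$.

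Next I would carry out the inner minimization in closed form: $\nabla_y\Phi=Qy+q+G^\top\diag(\rho)\lambda=0$ gives $y(\lambda)=-Q^{-1}\big(q+G^\top\diag(\rho)\lambda\big)$, and substituting back produces, up to an additive constant independent of $\lambda$, the concave quadratic $-\tfrac12\lambda^\top\diag(\rho)GQ^{-1}G^\top\diag(\rho)\lambda-\lambda^\top\diag(\rho)(GQ^{-1}q+g)$ to be maximized over $0\le\lambda\le e$. Applying the affine bijection $\lambda=\tfrac12(z+e)$, which maps $\{0\le\lambda\le e\}$ onto $\{-e\le z\le e\}$, then discarding constants and rescaling, turns this maximization into $\min_{-e\le z\le e}\tfrac12 z^\top Hz+z^\top h$ with exactly $H=\diag(\rho)GQ^{-1}G^\top\diag(\rho)$ and, using $\diag(\rho)e=\rho$, $h=\diag(\rho)\big(GQ^{-1}G^\top\rho+2(GQ^{-1}q+g)\big)$, which is the claimed data. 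Finally, the saddle-point identity forces $y^*=y(\lambda^*)$ with $\diag(\rho)\lambda^*=\tfrac12(\rho\pdot z^*+\rho)$, which is the stated recovery formula; and although the Box-QP minimizer $z^*$ need not be unique (as $H$ is only positive semidefinite), $G^\top\diag(\rho)z^*$ is constant over the optimal face because $Q^{-1}\succ0$ makes $Hv=0\iff G^\top\diag(\rho)v=0$, so $y^*$ is well defined and, as noted, unique.

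I expect the only genuinely delicate point to be the strong-duality step: justifying the $\min$--$\max$ interchange together with attainment of both optima, so that a saddle point $(y^*,\lambda^*)$ exists and $y^*$ coincides with the unique minimizer of \eqref{eqn_l1_peantly_QP}. Everything afterwards is routine linear algebra --- the gradient computation, the back-substitution, and the affine reparametrization --- with the constants checked termwise against the stated $H$ and $h$.
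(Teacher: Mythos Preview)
Your proposal is correct. The paper itself does not supply a proof of this lemma; it simply cites \cite[Col.~2]{wu2024parallel} and moves on, so there is nothing in the paper to compare against line by line. Your argument---recasting the nonsmooth penalty via its Fenchel/variational representation $\rho_i\max(0,t_i)=\max_{0\le\lambda_i\le1}\rho_i\lambda_i t_i$, invoking Sion (or, equivalently, Lagrangian duality for the epigraphical reformulation), eliminating $y$ by stationarity, and then applying the affine bijection $\lambda=\tfrac12(z+e)$---is the standard route and yields exactly the stated $H$, $h$, and recovery formula after the harmless overall scaling by $4$ that you implicitly absorb. Your closing remark that nonuniqueness of $z^*$ does not spoil uniqueness of $y^*$ (since any two Box-QP optima differ by a vector in $\ker(G^\top\diag(\rho))$) is a nice touch that the paper's statement leaves implicit.
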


\subsection{Lasso to Box-QP}\label{sec_Lasso_BoxQP}
Given the Lasso ($\ell_1$-regularized least squares) problem,
\begin{equation}\label{problem_lasso} 
    \min_{x\in\mathbb{R}^n}~\frac{1}{2}\|Ax-b\|_2^2 + \lambda \|x\|_1
\end{equation}
where $A\in\mathbb{R}^{m\times n}$ and $b\in\mathbb{R}^m$, assume that $m\geq n$ and $A^\top A$ has full rank.

To build the connection with Box-QP \eqref{eqn_box_QP},  first
reformulate Lasso \eqref{problem_lasso} by introducing an auxiliary variable $y=x$ as
\begin{equation}\label{problem_add_y_equality_constrained}
    \begin{aligned}
        \min_{x,y}&~ \frac{1}{2}\|Ax-b\|_2^2 + \lambda \|y\|_1\\
        \text{s.t.}&~ x - y=0
    \end{aligned}
\end{equation} 
in which strong duality holds. Because its objective function is convex and its affine equality constraint satisfies Salter's condition, see \cite[Sec. 5.2.3]{boyd2004convex}. This strong duality of Problem (\ref{problem_add_y_equality_constrained}) allows us to solve \eqref{problem_add_y_equality_constrained} via its dual problem and transform Lasso \eqref{problem_lasso} to Box-QP.
\begin{lemma}
The optimal solution $x^*$ of Lasso \eqref{problem_lasso} can be recovered by the optimal solution $z^*$ of Box-QP \eqref{eqn_box_QP} with
\begin{equation}\label{eqn_recover_primal}
    x^*=\left(A^\top A\right)^{\!-1}\!\left(A^\top b-\lambda z^*\right),
\end{equation}
if the data of Box-QP \eqref{eqn_box_QP} are assigned as 
\[
H=\lambda\!\left(A^\top A\right)^{\!-1}, \quad h=-\!\left(A^\top A\right)^{\!-1} \! A^\top b.
\]
\end{lemma}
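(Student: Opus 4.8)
The plan is to obtain Box-QP \eqref{eqn_box_QP} as the Lagrangian dual of the split reformulation \eqref{problem_add_y_equality_constrained}, and then to read the primal-recovery map off the $x$-stationarity condition. First I would attach a multiplier $\nu\in\mathbb{R}^n$ to the equality constraint $x-y=0$ and form $L(x,y,\nu)=\tfrac{1}{2}\|Ax-b\|_2^2+\lambda\|y\|_1+\nu^\top(x-y)$. Since the objective of \eqref{problem_add_y_equality_constrained} is convex and the only constraint is affine, Slater's condition holds (as already noted below \eqref{problem_add_y_equality_constrained}), so strong duality holds and a primal-dual optimal triple $(x^*,y^*,\nu^*)$ exists.

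Next I would evaluate the dual function $g(\nu)=\inf_{x,y}L(x,y,\nu)$ by splitting the minimization over $x$ and over $y$. The $x$-term $\inf_x\{\tfrac{1}{2}\|Ax-b\|_2^2+\nu^\top x\}$ is an unconstrained strictly convex quadratic --- this is where $A^\top A\succ0$ is used --- attained at $x=(A^\top A)^{-1}(A^\top b-\nu)$, with optimal value $-\tfrac{1}{2}(A^\top b-\nu)^\top(A^\top A)^{-1}(A^\top b-\nu)+\tfrac{1}{2}\|b\|_2^2$. The $y$-term $\inf_y\{\lambda\|y\|_1-\nu^\top y\}=-\sup_y\{\nu^\top y-\lambda\|y\|_1\}$ is minus the Fenchel conjugate of $\lambda\|\cdot\|_1$, hence equals $0$ when $\|\nu\|_\infty\le\lambda$ and $-\infty$ otherwise. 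Thus the dual problem is $\max\{-\tfrac{1}{2}(A^\top b-\nu)^\top(A^\top A)^{-1}(A^\top b-\nu)+\tfrac{1}{2}\|b\|_2^2 \;:\; \|\nu\|_\infty\le\lambda\}$.

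Then I would substitute $\nu=\lambda z$, so that $\|\nu\|_\infty\le\lambda$ becomes $-e\le z\le e$, expand the quadratic in $z$, negate to turn the $\max$ into a $\min$, drop the additive constants (which do not affect the optimal $z$), and divide the objective by $\lambda>0$. This produces exactly Box-QP \eqref{eqn_box_QP} with $H=\lambda(A^\top A)^{-1}$ and $h=-(A^\top A)^{-1}A^\top b$; note $H=H^\top\succ0$ because $A^\top A$ has full rank, so the Box-QP has a unique minimizer $z^*$, and by strict concavity of the dual objective in $\nu$ the optimal multiplier $\nu^*=\lambda z^*$ is unique. Likewise $A^\top A\succ0$ makes the Lasso objective strictly convex in $x$, so $x^*$ is unique as well. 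Finally, strong duality forces $x^*$ to minimize $L(\cdot,y^*,\nu^*)$, i.e.\ $A^\top(Ax^*-b)+\nu^*=0$, which rearranges to $x^*=(A^\top A)^{-1}(A^\top b-\lambda z^*)$, which is \eqref{eqn_recover_primal}.

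All the pieces are elementary; the step requiring the most care is the partial minimization over $y$ --- recognizing $\inf_y\{\lambda\|y\|_1-\nu^\top y\}$ as (minus) the conjugate of the $\ell_1$ norm, and thereby producing the box constraint $\|\nu\|_\infty\le\lambda$ --- together with the bookkeeping of the additive constants and the rescaling $\nu=\lambda z$ needed to match the stated $H$ and $h$ exactly. Beyond that I do not expect any genuine obstacle.
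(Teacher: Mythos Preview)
Your proposal is correct and follows essentially the same Lagrangian-dual route as the paper: attach a multiplier to the splitting constraint in \eqref{problem_add_y_equality_constrained}, minimize separately in $x$ (giving the recovery map) and in $y$ (giving, via the conjugate of $\lambda\|\cdot\|_1$, the box constraint), and then identify the resulting dual as Box-QP \eqref{eqn_box_QP}. If anything, your version is slightly more explicit than the paper's---you carry out the rescaling $\nu=\lambda z$ and the division by $\lambda$ to land exactly on the stated $H,h$, and you invoke strong duality to justify that $x^*$ must satisfy the $x$-stationarity condition---whereas the paper simply notes the unscaled Box-QP in the variable $z\in[-\lambda,\lambda]$ ``can easily be scaled'' to \eqref{eqn_box_QP}.
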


\begin{proof}
For Problem \eqref{problem_add_y_equality_constrained}, introduce the Lagrange multiplier $z\in\mathbb{R}^n$, for which the corresponding Lagrange function is
\[
\mathcal{L}(x,y,z)=\frac{1}{2}\|Ax-b\|_2^2+\lambda \|y\|_1+z^\top(x-y).
\]
By minimizing $\mathcal{L}(x,y,z)$ with respect to primal variables $x,y$, the dual function is
\[
\mathcal{D}(z)=\min_{x}\left\{\frac{1}{2}\|Ax-b\|_2^2+z^\top x\right\} -\max_{y}\left\{z^\top y-\lambda\|y\|_1\right\}.
\]
The first term of $\mathcal{D}(z)$ has the closed-form solution
\[
x^*=\left(A^\top A\right)^{\!-1}\!\left(A^\top b-z^*\right),
\]
which shows that the optimal solution $x^*$ can be recovered by $z^*$. Based on the dual norm (see \cite{boyd2004convex}), the second term of $\mathcal{D}(z)$ satisfies
\[
\max_y\left\{z^\top y-\lambda\|y\|_1\right\}=\left\{
\begin{array}{@{}r}
     0, \quad \|z\|_\infty \leq \lambda, \\
     +\infty, \quad\|z\|_\infty>\lambda.
\end{array}
\right.
\]
Thus, obtaining $z^*$ is to maximize  $\mathcal{D}(z)$, which is equivalent to the Box-QP:
\[
\begin{aligned}
    \max_z&~\frac{1}{2} \!\left[\left(A^\top A\right)^{\!-1}\!\left(A^\top b-z\right)\right]^\top(A^\top A)\left(A^\top A\right)^{\!-1}\!\left(A^\top b-z\right)\\
    &+z^\top \left(A^\top A\right)^{\!-1}\!\left(A^\top b-z\right)\\
    \text{s.t.}&~ -\lambda\leq z \leq \lambda,\\
    \Leftrightarrow\quad&\\
    \min_z&~ \frac{1}{2}z^\top (A^\top A)^{-1} z+z^\top (-(A^\top A)^{-1}A^\top b)\\
    \text{s.t.}&~ -\lambda\leq z \leq \lambda.
\end{aligned}
\]
which can easily be scaled to Box-QP \eqref{eqn_box_QP}. This completes the proof.
\end{proof}

\section{Preliminary: $O(n^{3.5}$) Feasible IPM Algorithm}
According to \cite[Ch. 5]{boyd2004convex}, the KKT condition of the Box-QP \eqref{eqn_box_QP} is the nonlinear equations
\begin{subequations}\label{eqn_KKT}
\begin{align}
    Hz + h + \gamma - \theta = 0,\label{eqn_KKT_a}\\
    z + \phi - e=0,\label{eqn_KKT_b}\\
    z - \psi + e=0,\label{eqn_KKT_c}\\
    \gamma \pdot\phi = 0,\label{eqn_KKT_d}\\
    \theta \pdot\psi = 0,\label{eqn_KKT_e}\\
(\gamma,\theta,\phi,\psi)\geq0,\label{eqn_KKT_f}
\end{align}
\end{subequations}
where $\gamma\in\rr^{n}$ and $\theta\in\rr^{n}$ denote the Lagrangian variables of the lower bound and upper bound, respectively, and $\phi\in\rr^{n}$ and $\psi\in\rr^{n}$ denote the slack variables of the lower bound and upper bound, respectively.

In solving the nonlinear KKT Eq.\  \eqref{eqn_KKT}, primal-dual IPMs, akin to Newton's method, involve linearizing KKT Eq.\ \eqref{eqn_KKT} around the current iterate $(z,\gamma>0,\theta>0,\phi>0,\psi>0)$ and solving a resulted linear system to obtain the moving direction $(\Delta z,\Delta\gamma,\Delta\theta,\Delta\phi,\Delta\psi)$ at each iteration. A step-size $\eta\in(0,1]$ is found by a line search procedure to ensure the next iterate $(z^+,\gamma^+,\theta^+,\phi^+,\psi^+)\leftarrow(z,\gamma,\theta,\phi,\psi)+\eta(\Delta z,\Delta\gamma,\Delta\theta,\Delta\phi,\Delta\psi)$ satisfying $(\gamma^+,\theta^+,\phi^+,\psi^+)>0$. This procedure has the disadvantage that the step size $\eta$ is often very small ($\eta\ll 1$) and thus results in low efficiency.

To address this issue, path-following primal-dual IPM algorithms introduce a positive parameter $\tau$ to replace \eqref{eqn_KKT_d} and \eqref{eqn_KKT_e} with the equations
\begin{subequations}\label{eqn_KKT_tau}
\begin{align}
    \gamma \phi = \tau e,\label{eqn_KKT_tau_d}\\
    \theta \psi = \tau e.\label{eqn_KKT_tau_e}
\end{align}
\end{subequations}
In path-following primal-dual IPM algorithms, the well-known theory is the existence of {\em central path}: for arbitrary $\tau>0$ there exists one unique solution $(z_{\tau},\gamma_{\tau},\theta_{\tau},\phi_{\tau},\psi_{\tau})$, the path $\tau \rightarrow (z_{\tau},\gamma_{\tau},\theta_{\tau},\phi_{\tau},\psi_{\tau})$ is called the {\em central path} \cite{wright1997primal}, and $(z_{\tau},\gamma_{\tau},\theta_{\tau},\phi_{\tau},\psi_{\tau})$ goes to a solution of \eqref{eqn_KKT} when $\tau$ decreasingly closes to $0$. Let define the primal-dual feasible set $\mathcal{F}$ and strictly feasible set $\mathcal{F}_{\mathrm{int}}$ as
\[
\begin{aligned}
    \mathcal{F}=\{(z,\gamma,\theta,\phi,\psi)|\eqref{eqn_KKT_a}\mathrm{-}\eqref{eqn_KKT_c} ~\text{hold},(\gamma,\theta,\phi,\psi)\geq0\},\\
    \mathcal{F}_{\mathrm{int}}=\{(z,\gamma,\theta,\phi,\psi)|\eqref{eqn_KKT_a}\mathrm{-}\eqref{eqn_KKT_c}~\text{hold},(\gamma,\theta,\phi,\psi)>0\}.
\end{aligned}
\]
According to whether the initial point $(z^0,\gamma^0,\theta^0,\phi^0,\psi^0)\in\mathcal{F}_{\mathrm{int}}$ or $(z^0,\gamma^0,\theta^0,\phi^0,\psi^0)\notin\mathcal{F}_{\mathrm{int}}$, path-following primal-dual IPM algorithms are divided into \emph{feasible} and \emph{infeasible} path-following IPMs, respectively. Generally, \emph{feasible} path-following IPMs have $O(\sqrt{n})$ iteration complexity and \emph{infeasible} path-following IPMs have $O(n)$ iteration complexity, some exceptions exist such as \emph{homogeneous infeasible} path-following IPMs which enjoy $O(\sqrt{n})$ but at the cost of ill-conditioning linear systems to be solved, see our previous work \cite{wu2025eiqp}. Our considered Box-QP \eqref{eqn_box_QP} offers an initial point $(z^0,\gamma^0,\theta^0,\phi^0,\psi^0)\in\mathcal{F}_{\mathrm{int}}$ without any cost, thus the following sections will only focus on \emph{feasible} path-following IPMs.

To simplify the presentation, introduce
\begin{equation}
    x=\mathrm{col}(\gamma,\theta) \in \mathbb{R}^{2n}, ~s=\mathrm{col}(\phi,\psi) \in \mathbb{R}^{2n},
\end{equation}
and define the neighborhood:
\begin{equation}
\mathcal{F}_{\mathrm{cen}}(\alpha,\tau)\triangleq\left\{(z,x,s)\in\mathcal{F}_{\mathrm{int}}:\frac{ \left\|x\pdot s- \tau e\right\|}{\tau}\leq \alpha\right\},  
\end{equation}
where $\alpha>0$, $\tau>0$.

\subsection{Cost-free initialization strategy}
Deriving the $O(\sqrt{n})$ iteration complexity of \emph{feasible} IPM algorithms often requires an initial point $ (z^0,x^0,s^0)\in\mathcal{F}_{\mathrm{cen}}(\alpha,\tau)$. For Box-QP \eqref{eqn_box_QP}, we present a cost-free initialization strategy by using a scaling approach.
\begin{remark}\label{remark_initialization_stragegy}
For $h=0$, the optimal solution of Box-QP \eqref{eqn_box_QP} is $z^*=0$. For $h\neq0$, we first scale its objective as 
\[
\min_z \tfrac{1}{2} z^\top (\tfrac{2\lambda}{\|h\|_\infty}H) z + z^\top (\tfrac{2\lambda}{\|h\|_\infty}h)
\]
which does not affect the optimal solution for any $\lambda>0$. With the definitions
\[
\tilde{H} = \frac{1}{\|h\|_\infty}H, \quad \tilde{h}=\frac{1}{\|h\|_\infty} h,
\]
it follows that $\|\tilde{h}\|_\infty =1$.
Then we replace \eqref{eqn_KKT_a} by 
\[
2\lambda \tilde{H}z+2\lambda\tilde{h}+\gamma-\theta=0,
\]
and the cost-free initialization strategy for Box-QP \eqref{eqn_box_QP} is
\begin{equation}\label{eqn_initialization_stragegy}
\begin{aligned}
    z^0 &= 0,\\
    \gamma^0 &=e - \lambda \tilde{h},\\
    \theta^0 &=e + \lambda \tilde{h},\\
    \phi^0 &= e, \\
    \psi^0 &= e.    
\end{aligned}
\end{equation}
This set of values is in $\mathcal{F}_{\mathrm{int}}$. 

Furthermore, let $\lambda=\frac{\alpha}{\sqrt{2n}}$ and $\tau^0=1$; then
\[
\begin{aligned}
   \frac{\|x^0s^0-\tau^0e\|}{\tau^0} &=  \left\|\left[\begin{array}{c}
        1-\lambda\Tilde{h}  \\
        1+\lambda\Tilde{h} 
        \end{array}\right] - e\right\| = \left\|\left[\begin{array}{c}
        -\lambda\Tilde{h}  \\
        \lambda\Tilde{h} 
        \end{array}\right]\right\|\\
        &=\sqrt{2}\lambda\|\Tilde{h}\|\\
        &\leq\sqrt{2}\lambda\sqrt{n}\|\Tilde{h}\|_\infty= \lambda\sqrt{2n}=\alpha,
\end{aligned}
\]
which shows that the initialization point from \eqref{eqn_initialization_stragegy} is in $\mathcal{F}_{\mathrm{cen}}(\alpha,\tau)$.
\end{remark}

\subsection{$O(n^3)$ cost of each iteration}
The idea of \emph{feasible} IPM algorithms is to continuously decrease $\tau$ to $0$ and generate iterates that always stay in $\mathcal{F}_{\mathrm{cen}}(\alpha,\tau)$ and the duality gap $x^\top s$ approaches zero. Specially, given a current iterate $(z,x,s)\in\mathcal{F}_{\mathrm{cen}}(\alpha,\tau)$, a direction $(\Delta z,\Delta x, \Delta s)$ can be calculated by solving the linear system
\begin{subequations}\label{eqn_KKT_delta}
\begin{align}
   2\lambda\tilde{H}\Delta z +\Omega\Delta x &= 0,\label{eqn_KKT_delta_a}\\
    \Omega^\top\Delta z + \Delta s &=0,\label{eqn_KKT_delta_b}\\
    s\pdot\Delta x+ x\pdot\Delta s &= \tau e- x\pdot s\label{eqn_KKT_delta_c}
\end{align}
\end{subequations}
obtained by linearizing \eqref{eqn_KKT_a}--\eqref{eqn_KKT_c} and \eqref{eqn_KKT_tau}, 
where $\Omega=[I,-I] \in\mathbb{R}^{n \times 2n}$. As $H\succeq0$, one feature of the solution of \eqref{eqn_KKT_delta} is
\begin{equation}\label{eqn_Delta_x_Delta_s_positive}
    \Delta x^\top \Delta s = \Delta x^\top (-\Omega^\top \Delta z)=\Delta z^\top(2\lambda\tilde{H})\Delta z\geq0,
\end{equation}
which is critical in proving the iteration complexity. By letting
\begin{subequations}\label{eqn_Delta_gamma_theta_phi_psi}
    \begin{align}
        &\Delta \gamma=\frac{\gamma}{\phi}\Delta z+\tau\frac{1}{\phi}-\gamma,\\
        &\Delta \theta=-\frac{\theta}{\psi}\Delta z+\tau\frac{1}{\psi}-\theta,\\
        &\Delta\phi = - \Delta z,\\
        &\Delta\psi = \Delta z,
    \end{align}
\end{subequations}
\eqref{eqn_KKT_delta} can be reduced into a more compact system of linear equations,
\begin{equation}{\label{eqn_compact_linsys}}
\Big(2\lambda\tilde{H}+\diag\!\Big(\frac{\gamma}{\phi}\Big) + \diag\!\Big(\frac{\theta}{\psi}\Big) \!\Big) \Delta z=\tau\frac{1}{\psi}-\tau\frac{1}{\phi}+ \gamma - \theta.
\end{equation}
This Newton step takes $O(n^3)$ computation cost by using Cholesky factorization. 

\subsection{$O(\sqrt{n})$ iteration complexity}
Most previous works adopt the update rule for $\tau$: 
\[
\tau^+\leftarrow\left(1-\frac{\beta}{\sqrt{2n}}\right)\!\mu
\]
(where $\mu=\frac{x^\top s}{2n}$ denotes the duality gap measure and $\beta$ denotes the chosen decreasing rate parameter) to derive $O(\sqrt{n})$ iteration complexity. To simplify and unify the proof of $O(n^{3.5})$ and $O(n^3)$ time complexity in feasible IPMs, this paper adopts the update rule for $\tau$ as 
\begin{equation}\label{eqn_assumption_tau}
         \tau^+ = \left(1-\frac{\beta}{\sqrt{2n}}\right)\!\tau.
    \end{equation}
The following Corollary shows how to choose appropriate values for $\alpha$ and $ \beta$ to ensure the positivity of sequences $\{x\}$ and $\{s\}$, as well as the certified $O(\sqrt{n})$ iteration complexity.
\begin{corollary}\label{corollary_n_35}
   Consider any value of $\alpha\in(0,1)$ that satisfies
   \[
   \begin{aligned}
       \mu&=\frac{\alpha}{1-\alpha}\in(0,1),\\
       \beta&=\frac{\alpha-\sigma}{1+\frac{\alpha}{\sqrt{2n}}}>0,
   \end{aligned}
   \]
   where $\sigma=\frac{\alpha^2}{2(1-\alpha)}$ (i.e., $\alpha\in(0,0.5)$).\footnote{For example, choosing $\alpha=0.3$ results in $\mu=0.4286\in(0,1),~\sigma=0.0643<\alpha\Leftrightarrow\beta>0$.}
   
   Suppose that 
    \[
    (x,s)>0, \quad \xi\triangleq\frac{\|x\pdot s-\tau e\|}{\tau}\leq\alpha.
    \]
    Using the update rule \eqref{eqn_assumption_tau} for $\tau$, the next iterate $(z^+,x^+,s^+)$, given by 
\begin{equation}\label{eqn_full_newton_step}
        (z^+, x^+,s^+) = (z, x,s) + (\Delta z, \Delta x, \Delta s),
    \end{equation}
    (where $(\Delta z, \Delta x, \Delta s)$ is from \eqref{eqn_Delta_gamma_theta_phi_psi} and  \eqref{eqn_compact_linsys}), satisfies
    \[
(x^+,s^+)\geq0,\quad \xi^+\triangleq\frac{\|x^+\pdot s^+-\tau^+ e\|}{\tau^+}\leq\alpha.
    \]
     Furthermore, after \textbf{exact}
\begin{equation}\label{eqn_iteration_number}
\mathcal{N}_{iter}=\left\lceil\frac{\log\left(\frac{2n+\alpha\sqrt{2n}}{\epsilon}\right)}{-\log\left(1-\frac{\beta}{\sqrt{2n}}\right)}\right\rceil
    \end{equation}
    iterations, the inequality $x^\top s\leq\epsilon$ holds.
\end{corollary}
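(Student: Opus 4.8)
The plan is to establish Corollary~\ref{corollary_n_35} in two movements: first the one-step invariance of the neighborhood $\mathcal{F}_{\mathrm{cen}}(\alpha,\tau)$ under the full Newton step, and then the iteration count from a geometric-decay argument on the duality gap. For the invariance part, I would begin by recording the exact expressions for $x^+ \pdot s^+$. Writing $x^+ = x + \Delta x$, $s^+ = s + \Delta s$, and using the centering equation \eqref{eqn_KKT_delta_c}, i.e. $s \pdot \Delta x + x \pdot \Delta s = \tau e - x \pdot s$, one gets the standard identity $x^+ \pdot s^+ = \tau e + \Delta x \pdot \Delta s$. Then I would estimate
\[
\frac{\|x^+ \pdot s^+ - \tau^+ e\|}{\tau^+} \le \frac{\|\tau e - \tau^+ e\| + \|\Delta x \pdot \Delta s\|}{\tau^+},
\]
splitting the deviation into the part caused by shrinking $\tau \to \tau^+ = (1 - \tfrac{\beta}{\sqrt{2n}})\tau$ and the part from the second-order cross term. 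The first piece contributes $\|(\tau - \tau^+)e\|/\tau^+ = \frac{\beta}{\sqrt{2n}-\beta}\sqrt{2n}$-type quantities that I would bound using $\beta = \frac{\alpha - \sigma}{1 + \alpha/\sqrt{2n}}$.

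The heart of the argument, and the step I expect to be the main obstacle, is bounding $\|\Delta x \pdot \Delta s\|$. The clean route uses the scaled variables: define $d = \sqrt{x/s}$ (componentwise), $p = d^{-1}\pdot\Delta x$, $q = d\pdot\Delta s$, so that $\Delta x \pdot \Delta s = p \pdot q$ and $p + q = (x\pdot s)^{-1/2}\pdot(\tau e - x\pdot s)$. Crucially, \eqref{eqn_Delta_x_Delta_s_positive} gives $\Delta x^\top \Delta s = p^\top q \ge 0$, which is exactly the orthogonality-type property that lets one invoke the standard lemma $\|p \pdot q\| \le \tfrac{1}{2\sqrt{2}}\|p + q\|^2$ (valid when $p^\top q \ge 0$; the $\sqrt{2}$ replacing the usual constant accounts for the $2n$-dimensional ambient space, or one simply uses $\|p\pdot q\|_\infty \le \tfrac14\|p+q\|^2$ and then $\|\cdot\| \le \|\cdot\|$ bookkeeping). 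Then $\|p+q\|^2 \le \frac{\|\tau e - x\pdot s\|^2}{\min_i (x_is_i)}$, and since $(x,s)$ lies in the current neighborhood with parameter $\xi \le \alpha$, one has $\min_i(x_is_i) \ge (1-\alpha)\tau$ and $\|\tau e - x\pdot s\| \le \alpha\tau$, yielding $\|\Delta x\pdot\Delta s\| \le \frac{\alpha^2\tau}{2(1-\alpha)} = \sigma\tau$. Assembling the two pieces and dividing by $\tau^+ = (1-\beta/\sqrt{2n})\tau$, the bound $\xi^+ \le \alpha$ reduces to precisely the algebraic condition $\beta(1 + \alpha/\sqrt{2n}) \le \alpha - \sigma$, which is the definition of $\beta$ in the hypothesis; the condition $\sigma < \alpha$ (equivalently $\alpha < 1/2$) is what keeps $\beta > 0$. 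Positivity $(x^+,s^+) \ge 0$ follows because $x^+\pdot s^+ = \tau e + \Delta x\pdot\Delta s$ with the deviation bounded by $\sigma\tau < \tau$ forcing every component $x_i^+ s_i^+ > 0$, combined with a continuity/homotopy argument along the segment (the standard "no component can cross zero without the product vanishing" reasoning), also using $\mu \in (0,1)$ to compare scales.

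For the iteration count, I would show the duality gap contracts geometrically: from $x^+\pdot s^+ = \tau e + \Delta x\pdot\Delta s$ one has $(x^+)^\top s^+ = 2n\tau + \Delta x^\top\Delta s$, but more usefully, staying in $\mathcal{F}_{\mathrm{cen}}(\alpha,\tau)$ gives $x^\top s \le 2n\tau + \|\tau e - x\pdot s\|\cdot\sqrt{2n}$-type slack, concretely $x^\top s \le (2n + \alpha\sqrt{2n})\tau$ since $\sum_i x_is_i \le 2n\tau + \|x\pdot s - \tau e\|_1 \le 2n\tau + \sqrt{2n}\,\alpha\tau$. Since $\tau$ is multiplied by $(1 - \beta/\sqrt{2n})$ each iteration and $\tau^0 = 1$, after $k$ iterations $\tau^k = (1-\beta/\sqrt{2n})^k$, so $x^\top s \le (2n + \alpha\sqrt{2n})(1-\beta/\sqrt{2n})^k$. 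Requiring the right-hand side $\le \epsilon$ and solving for $k$ gives $k \ge \frac{\log((2n+\alpha\sqrt{2n})/\epsilon)}{-\log(1-\beta/\sqrt{2n})}$, and taking the ceiling yields exactly $\mathcal{N}_{iter}$ in \eqref{eqn_iteration_number}. I would emphasize that the count is \emph{exact} (not merely an upper bound) because the update rule \eqref{eqn_assumption_tau} is deterministic and data-independent: $\tau^k$ is a fixed function of $n$ and $k$ alone, so the stopping index is predetermined. The only genuinely delicate bookkeeping is tracking which norm ($\ell_2$ versus $\ell_\infty$ versus $\ell_1$) is used where and the dimension factor $\sqrt{2n}$ versus $\sqrt{n}$; the conceptual content is entirely the scaled-variable cross-term estimate enabled by \eqref{eqn_Delta_x_Delta_s_positive}.
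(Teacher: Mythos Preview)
Your plan is essentially the paper's own proof: the same scaled variables $p=\sqrt{s/x}\pdot\Delta x$, $q=\sqrt{x/s}\pdot\Delta s$, the same cross-term bound $\|\Delta x\pdot\Delta s\|\le \tfrac12\|p+q\|^2\le \tfrac{\alpha^2}{2(1-\alpha)}\tau=\sigma\tau$ via $p^\top q\ge 0$, the same split of $\xi^+$ into the $\tau\to\tau^+$ piece and the $\Delta x\pdot\Delta s$ piece, and the same geometric-decay count. Two small places where the paper differs from your sketch: (i) for positivity the paper does not use a homotopy argument but simply bounds the relative increments, showing $\bigl\|\tfrac{\Delta x}{x}\bigr\|\le \tfrac{\alpha}{1-\alpha}=\mu<1$ (and likewise for $s$), which immediately gives $x_i^+=x_i(1+\Delta x_i/x_i)>0$ --- this is in fact where the hypothesis $\mu\in(0,1)$ is actually used; (ii) for the word ``exact'' the paper does more than invoke determinism of $\tau^k$: it also records the \emph{lower} bound $(x^k)^\top s^k\ge (2n-\alpha\sqrt{2n})(1-\beta/\sqrt{2n})^k$ and checks the algebraic inequality $(2n-\alpha\sqrt{2n})(1-\beta/\sqrt{2n})\le 2n+\alpha\sqrt{2n}$ to argue the duality gap cannot cross $\epsilon$ before the predicted iteration. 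Your determinism remark alone only shows the algorithm \emph{runs} for $\mathcal N_{\mathrm{iter}}$ steps, not that fewer steps would be insufficient; if you want the stronger reading of ``exact'' you should add this two-sided estimate.
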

\begin{proof}
    See Appendix \ref{sec_corollary_n_35}.
\end{proof} 
\begin{remark}
The number of iterations $\mathcal{N}$ in  \eqref{eqn_iteration_number} is proved to be $O(\sqrt{n})$ (as $-\log(1-\beta/\sqrt{2n})\geq\beta/\sqrt{2n}$ when $0<\beta/\sqrt{2n}<1$) and each iteration requires solving the linear system \eqref{eqn_compact_linsys} whose [flops] cost is $O(n^3)$. Thus, the total time complexity is $O(n^{3.5})$. The pseudo-code of an $O(n^{3.5})$ \emph{feasible} IPM algorithm is shown in Alg.\  \ref{alg_n_3_dot_5}.
\end{remark}
\begin{algorithm}[H]
    \caption{An $O(n^{3.5})$ Feasible IPM Algortihm for Box-QP \eqref{eqn_box_QP}}\label{alg_n_3_dot_5}
    \textbf{Input}: Given the Box-QP data $(H,h)$, the dimension $n$.
    \vspace*{.1cm}\hrule\vspace*{.1cm}
    \textbf{if }$\|h\|_\infty=0$, \textbf{return} $z^*=0$;\\
    \textbf{otherwise},\\
     Choose $0<\alpha<0.5$ and an optimality level $\epsilon$ (such as $\epsilon=10^{-6}$). Set $\lambda=\frac{\alpha}{\sqrt{2n}}$. Cache $2\lambda\tilde{H}=\frac{2\lambda}{\|h\|_\infty}H \mathrm{and} \tilde{h}=\frac{1}{\|h\|_\infty}h$. 
    Initialize $(z,\gamma,\theta,\phi,\psi)$ as per  \eqref{eqn_initialization_stragegy} and $\tau=1$. Compute $\sigma=\frac{\alpha^2}{2(1-\alpha)}$, $\beta=\frac{\alpha-\sigma}{1+\frac{\alpha}{\sqrt{2n}}}$, and the required number of iterations  $\mathcal{N}_\mathrm{iter}=\!\left\lceil\frac{\log\left(\frac{2n+\alpha\sqrt{2n}}{\epsilon}\right)}{-\log\left(1-\frac{\beta}{\sqrt{2n}}\right)}\right\rceil$;\\ \\
    \textbf{for} $k=1,2,\cdots{},\mathcal{N}_\mathrm{iter}$, then stop
    \begin{enumerate}[label*=\arabic*., ref=\theenumi{}]    
       \item Compute $\Delta z$ by  solving  \eqref{eqn_compact_linsys} and $(\Delta\gamma,\Delta\theta,\Delta\phi,\Delta\psi)$ by  \eqref{eqn_Delta_gamma_theta_phi_psi};\\
      \item $(z,\gamma,\theta,\phi,\psi)\leftarrow(z,\gamma,\theta,\phi,\psi) + (\Delta z,\Delta\gamma,\Delta\theta,\Delta\phi,\Delta\psi)$;\\
      \item $\tau\leftarrow\left(1-\frac{\beta}{2\sqrt{n}}\right)\tau$;
    \end{enumerate}
    \textbf{end}\\
    \textbf{return} $z^*=z$;
\end{algorithm}

\section{$O(n^3)$ Feasible IPM Algorithm}
Reducing the certified $O(\sqrt{n})$ iteration complexity to $O(\log(n))$ remains an open challenge in the development of IPMs. If achieved, it would reduce the total time complexity of IPMs to $O(n^3\log(n))\approx O(n^3)$. 

Rather than aiming to reduce the number of iterations, this article shifts attention to lowering the cost per iteration by exploiting the similarity between adjacent linear systems. This similarity comes from the conservative $O(\sqrt{n})$ iteration complexity and is exploited by the proposed \emph{approximated Newton step}, which consists of multiple rank-1 updates.

\subsection{Approximated Newton step}
Instead of solving the linear Newton system \eqref{eqn_KKT_delta}, the \emph{approximated Newton step} introduces the approximate 
\[
(\Tilde{x},\Tilde{s})\triangleq(\tilde{\gamma},\tilde{\theta},\tilde{\phi},\tilde{\psi})
\]
for $(x,s)=(\gamma,\theta,\phi,\psi)$ into the Newton system \eqref{eqn_KKT_delta}. That is,
\begin{subequations}\label{eqn_approximate_Newton_step}
\begin{align}
2\lambda\tilde{H}\Delta z +\Omega\Delta x &= 0,\label{eqn_approximate_a}\\
    \Omega^\top\Delta z + \Delta s &=0,\label{eqn_approximate_b}\\
\tilde{s}\pdot\Delta x+ \tilde{x}\pdot\Delta s &= \tau e- x\pdot s.\label{eqn_approximate_c}
\end{align}
\end{subequations}
Similar to \eqref{eqn_Delta_x_Delta_s_positive} (as $H\succeq0$), we have that 
\[
 \Delta x^\top \Delta s = \Delta x^\top (-\Omega^\top \Delta z)=\Delta z^\top(2\lambda\tilde{H})\Delta z\geq0.
\]
 By letting
\begin{subequations}\label{eqn_approximated_delta_gamma_etc_update}
    \begin{align}
        &\Delta \gamma=\frac{\tilde{\gamma}}{\tilde{\phi}}\Delta z+\tau\frac{1}{\tilde{\phi}}- \frac{\gamma\pdot\phi}{\tilde{\phi}},\\
        &\Delta \theta=-\frac{\tilde{\theta}}{\tilde{\psi}}\Delta z+\tau\frac{1}{\tilde{\psi}}-\frac{\theta\pdot\psi}{\tilde{\psi}},\\
        &\Delta\phi = - \Delta z,\\
        &\Delta\psi = \Delta z,
    \end{align}
\end{subequations}
\eqref{eqn_approximate_Newton_step} can be reduced into a more compact system of linear equations,
\begin{equation}\label{eqn_approximated_Newton_compact_system}
\Big(2\lambda\tilde{H}+\diag\!\Big(\frac{\tilde{\gamma}}{\tilde{\phi}}\Big) + \diag\!\Big(\frac{\tilde{\theta}}{\tilde{\psi}}\Big) \!\Big) \Delta z=\tau\frac{1}{\tilde{\psi}}-\tau\frac{1}{\tilde{\phi}}+ \frac{\gamma\pdot\phi}{\tilde{\phi}}- \frac{\theta\pdot\psi}{\tilde{\psi}}.
\end{equation}
Unlike the compact system \eqref{eqn_compact_linsys} in the exact Newton step \eqref{eqn_KKT_delta} typically solved using the $O(n^3)$ Cholesky factorization, the compact system \eqref{eqn_approximated_Newton_compact_system} in the \textit{approximated Newton step} \eqref{eqn_approximate_Newton_step} leverages multiple rank-1 update techniques based on the introduced update rule for $(\Tilde{x}, \Tilde{s})$.

The update rule for the approximate $(\Tilde{x},\Tilde{s})$ is that: \textit{i)} before starting iterations,  $(\tilde{x},\tilde{s})\leftarrow(x,s)$; \textit{ii)} at each iteration once $(x,s)$ updates, then $(\tilde{x},\tilde{s})$ is updated as
\begin{equation}\label{eqn_update_rule_approximate_x_s}
    \begin{aligned}
        &\tilde{x}_i \leftarrow\left \{
        \begin{array}{l}
x_i,~\textrm{for }
\frac{\tilde{x}_i}{x_i} \notin\!\left[ \frac{1}{1+\delta}, 1+\delta\right], \\
\tilde{x}_i, ~\textrm{otherwise},
    \end{array}\right.~ i=1,2,\cdots{},2n\\
&\tilde{s}_i\leftarrow \left\{\begin{array}{l}
s_i,~\textrm{for } \frac{\Tilde{s}_i}{s_i}\notin \! \left[ \frac{1}{1+\delta}, 1+\delta\right], \\
\tilde{s}_i, ~\textrm{otherwise},
\end{array}\right.~ i=1,2,\cdots{},2n
\end{aligned}
\end{equation}
where $\delta>0$ is the approximation parameter (its value will be discussed later). From the update rule \eqref{eqn_update_rule_approximate_x_s} for the approximate $(\tilde{x},\tilde{s})$, it is easy to derive the conditions:
\[
    \begin{aligned}
    &\frac{\tilde{x}_i}{x_i} \in \!\left[\frac{1}{1+\delta},1+ \delta\right], \quad i=1,2,\cdots{},2n,\\
&\frac{\tilde{s}_i}{s_i} \in \!\left[\frac{1}{1+\delta},1+\delta \right], \quad i=1,2,\cdots{},2n,
    \end{aligned}
\]
which is crucial for proving the time complexity in subsequent subsections.

Moreover, expanding \eqref{eqn_update_rule_approximate_x_s} gives that
\begin{equation}\label{eqn_tilde_gamma_theta_phi_psi_update}
    \begin{aligned}
&\tilde{\gamma}_i\leftarrow\left\{\begin{array}{l}
\gamma_i,~\textrm{for } \frac{\tilde{\gamma}_i}{\gamma_i},\notin\! 
\left[\frac{1}{1+\delta}, 1+\delta\right],\\
\tilde{\gamma}_i,~\textrm{otherwise},
\end{array}\right.~ i=1,2,\cdots{},n\\
&\tilde{\theta}_i\leftarrow 
\left\{\begin{array}{l}
 \theta_i,~\textrm{for } \frac{\tilde{\theta}_i}{\theta_i}\notin\! \left[\frac{1}{1+\delta},1+\delta\right], \\
\tilde{\theta}_i,~\textrm{otherwise},
 \end{array}\right.~ i=1,2,\cdots{},n\\
&\tilde{\phi}_i\leftarrow\left\{\begin{array}{l}
\phi_i,~\textrm{for } \frac{\tilde{\phi}_i}{\phi_i}\notin \left[\frac{1}{1+\delta},1+\delta\right], \\
\tilde{\phi}_i,~\textrm{otherwise},
\end{array}\right.~ i=1,2,\cdots{},n\\
&\tilde{\psi}_i\leftarrow 
\left\{\begin{array}{l}
\psi_i,~\textrm{for } \frac{\tilde{\psi}_i}{\psi_i}\notin \left[\frac{1}{1+\delta},1+\delta\right], \\
\tilde{\psi}_i,~\textrm{otherwise}
\end{array}\right.~ i=1,2,\cdots{},n\\
    \end{aligned}
\end{equation}
Before showing how to exploit \eqref{eqn_tilde_gamma_theta_phi_psi_update} to solve the compact system  \eqref{eqn_approximated_Newton_compact_system} cheaply, we first introduce the vector $d$ and the matrix $B$ as
\begin{subequations}
    \begin{align}
    d&\triangleq\frac{\tilde{\gamma}}{\tilde{\phi}}+\frac{\tilde{\theta}}{\tilde{\psi}},\\
    B&\triangleq2\lambda\tilde{H}+\diag(d).   
    \end{align}
\end{subequations}
Below are some sets used in the derivation of the update rule for $B$ and $d$.
\begin{definition}
  Let $I_{\gamma}, I_{\theta}, I_{\phi}, I_{\psi}$ denote the sets of indices $\{i\left|i=1,2,\cdots{},n\right.\}$ as
\begin{equation}\label{eqn_sets_I_gamma_theta_phi_psi}
  \begin{aligned}
   I_{\gamma} &\triangleq\left\{i \,\left|\, \frac{\tilde{\gamma}_i}{\gamma_i}\notin\! \left[\frac{1}{1+\delta},1+\delta\right],\ i=1,2,\cdots{},n \right.\right\}  \\  
    I_\theta &\triangleq\left\{i \,\left|\, \frac{\tilde{\theta}_i}{\theta_i}\notin\! \left[\frac{1}{1+\delta},1+\delta\right],\ i=1,2,\cdots{},n \right.\right\}\\
    I_{\phi} &\triangleq\left\{i \,\left|\, \frac{\tilde{\phi}_i}{\phi_i}\notin\! \left[\frac{1}{1+\delta},1+\delta\right],\ i=1,2,\cdots{},n \right.\right\}  \\  
    I_\psi &\triangleq\left\{i \,\left|\, \frac{\tilde{\psi}_i}{\psi_i}\notin\! \left[\frac{1}{1+\delta}, 1+\delta\right],\ i=1,2,\cdots{},n \right.\right\}  
    \end{aligned}
  \end{equation}
for the current iterate $(z,\gamma,\theta,\phi,\psi)$ and $(\tilde{\gamma},\tilde{\theta},\tilde{\phi},\tilde{\psi})$.
\end{definition}
Then, an implementable procedure for  \eqref{eqn_tilde_gamma_theta_phi_psi_update} and the update rule for sets $I_\gamma,I_\theta,I_\phi,I_\psi$ is formulated as
\begin{subequations}\label{procedure_gamma_theta_phi_psi}
    \begin{align}
&I_\gamma\leftarrow\emptyset,~I_\theta\leftarrow\emptyset,~I_\phi\leftarrow\emptyset,~I_\psi\leftarrow\emptyset;\\
    &\textbf{for}~i=1,2,\cdots{},n\notag\\
&~\mathrm{if}~\frac{\tilde{\gamma}_i}{\gamma_i}\notin\! \left[\frac{1}{1+\delta},1+\delta\right]: \, \tilde{\gamma}_i\leftarrow \gamma_i, I_\gamma\leftarrow I_\gamma\cup\left\{i\right\};\\
    &~\mathrm{if}~\frac{\tilde{\theta}_i}{\theta_i}\notin\! \left[\frac{1}{1+\delta},1+\delta\right]:\, \tilde{\theta}_i\leftarrow \theta_i, I_\theta\leftarrow I_\theta\cup\left\{i\right\};\\
    &~\mathrm{if}~\frac{\tilde{\phi}_i}{\phi_i}\notin\! \left[\frac{1}{1+\delta},1+\delta\right]:\, \tilde{\gamma}_i\leftarrow \gamma_i, I_\phi\leftarrow I_\phi\cup\left\{i\right\};\\
    &~\mathrm{if}~\frac{\Tilde{\psi}_i}{\psi_i}\notin\! \left[\frac{1}{1+\delta},1+\delta\right]:\, \tilde{\psi}_i\leftarrow \psi_i, I_\psi\leftarrow I_\psi\cup\left\{i\right\};\\
        &\textbf{end}\notag
    \end{align}
\end{subequations}
After executing the above procedure \eqref{procedure_gamma_theta_phi_psi}, the update rules for $B$ and $d$ are 
\begin{subequations}\label{eqn_B_update}
    \begin{align}
        &B \leftarrow B+ \sum_{i\in I_\gamma\cup I_\theta\cup I_\phi\cup I_\psi} \! \left(\frac{\tilde{\gamma}_i}{\tilde{\phi}_i}+\frac{\tilde{\theta}_i}{\tilde{\psi}_i}-d_i\right)\!e_ie_i^\top,\\
        &d \leftarrow \frac{\tilde{\gamma}}{\tilde{\phi}}+\frac{\tilde{\theta}}{\tilde{\psi}}.
    \end{align}
\end{subequations}
Solving the compact system \eqref{eqn_approximated_Newton_compact_system} requires maintaining the inverse of $B$, instead of $B$ itself. Thus, we introduce the matrix
\begin{equation}
    M\triangleq B^{-1}.
\end{equation}
Then, based on the well-known Sherman-Morrison formula and  \eqref{eqn_B_update}, the update rules of $M$ and $d$ are
\begin{subequations}\label{eqn_M_update}
    \begin{align}
        &\textbf{for}~i\in I_\gamma\cup I_\theta\cup I_\phi\cup I_\psi\notag\\
        &\quad\Delta \leftarrow \frac{\tilde{\gamma}_i}{\tilde{\phi}_i}+\frac{\tilde{\theta}_i}{\tilde{\psi}_i}-d_i\\
        &\quad M\leftarrow M -\frac{\Delta}{1+\Delta \cdot M_{i,i}}M_{\cdot,i}M_{\cdot,i}^\top\label{eqn_M_update_b}\\
        &\quad d_i \leftarrow \frac{\tilde{\gamma_i}}{\tilde{\phi_i}}+\frac{\tilde{\theta_i}}{\tilde{\psi_i}}\\
        &\textbf{end}\notag
    \end{align}
\end{subequations}
\begin{remark}
As the update of $M$, \eqref{eqn_M_update} involves multiple rank-1 updates and each rank-1 update requires $O(n^2)$ operations. Thus,  \eqref{eqn_M_update} totally requires $O\left(|I_\gamma\cup I_\theta\cup I_\phi\cup I_\psi|\times n^2\right)$ operations.
\end{remark}

After updating $M$, calculating $\Delta z$ from \eqref{eqn_approximated_Newton_compact_system} is a matrix-vector multiplication,
\begin{equation}\label{eqn_delta_z_update}
    \Delta z = M\!\left(\tau\frac{1}{\tilde{\psi}}-\tau\frac{1}{\tilde{\phi}}+ \frac{\gamma\pdot\phi}{\tilde{\phi}}- \frac{\theta\pdot\psi}{\tilde{\psi}}\right),
\end{equation}
and then $(\Delta \gamma, \Delta\theta,\Delta\phi,\Delta\psi)$ can be calculated from  \eqref{eqn_approximated_delta_gamma_etc_update}.

The next two subsections prove the iteration complexity and the total number of rank-1 updates during the total iterations, respectively.

\subsection{$O(\sqrt{n})$ iteration complexity}
One main contribution of this article is the following theorem, which demonstrates that using the \emph{approximated Newton step} in the feasible IPM algorithm framework yields the same $O(\sqrt{n})$ iteration complexity as using the exact Newton step.
\begin{theorem}\label{theorem_n_3_iteration}
Consider any values of $\alpha\in(0,1),\delta>0$ that satisfy\footnote{For example, choosing $\alpha=0.3$, $\delta=0.15$ results in $\mu=0.6518\in(0,1), ~\sigma=0.1997<\alpha$ (which satisfies $\beta>0$).} 
\begin{equation}\label{eqn_alpha_delta_beta}
        \begin{aligned}
        \mu&=\frac{(1+\delta)^3\alpha}{1-\alpha}\in(0,1),\\
        \beta&=\frac{\alpha-\sigma}{1+\frac{\alpha}{\sqrt{2n}}}>0,
    \end{aligned}
    \end{equation}
    where 
    \[
\sigma=\sqrt{2}\delta(1+\delta)^2\alpha\sqrt{\frac{1+\alpha}{1-\alpha}}+\frac{(1+\delta)^2\alpha^2}{2(1-\alpha)}.
    \]
    
   Suppose that 
    \begin{subequations}
        \begin{align}
            &(x,s)>0,\quad \xi\triangleq\frac{\|x\pdot s-\tau e\|}{\tau}\leq\alpha , \label{eqn_assumption_cen}\\
            &\frac{\tilde{x}_i}{x_i} \in\! \left[\frac{1}{1+\delta},1+\delta\right], \quad i=1,\cdots{},2n,\label{eqn_tilde_x_definition}\\
            &\frac{\tilde{s}_i}{s_i} \in\! \left[\frac{1}{1+\delta},1+\delta\right], \quad i=1,\cdots{},2n.\label{eqn_tilde_s_definition}
        \end{align}
    \end{subequations}
    Using the update rule \eqref{eqn_assumption_tau} for $\tau$, the next iterate $(z^+,x^+,s^+)$, given by
    \begin{equation}\label{eqn_next_point}
        (z^+, x^+,s^+) = (z, x,s) + (\Delta z, \Delta x, \Delta s)
    \end{equation}
   where $(\Delta z, \Delta x, \Delta s)$ is the solution of \eqref{eqn_approximate_Newton_step}, satisfies
\begin{equation} \label{eqn_next_iterate_center}
(x^+,s^+)>0,~\quad \xi^+\triangleq\frac{\|x^+ \pdot s^+-\tau^+ e\|}{\tau^+}\leq\alpha.
\end{equation}
    Furthermore, after \textbf{exact}
    \begin{equation}\label{eqn_number_iterations}
    \mathcal{N}_\mathrm{iter}=\!\left\lceil\frac{\log\!\left(\frac{2n+\alpha\sqrt{2n}}{\epsilon}\right)}{-\log\!\left(1-\frac{\beta}{\sqrt{2n}}\right)}\right\rceil
    \end{equation}
    iterations, the inequality $x^\top s\leq\epsilon$ holds.
\end{theorem}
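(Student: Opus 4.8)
The plan is to mirror the structure of the exact-step analysis in Corollary~\ref{corollary_n_35}, but carefully track the extra error introduced by replacing $(x,s)$ with the approximates $(\tilde x,\tilde s)$. I would organize the argument around three quantities: the scaled residual $\xi = \|x\pdot s - \tau e\|/\tau$, the step directions $(\Delta x,\Delta s)$, and the product $(x+\Delta x)\pdot(s+\Delta s)$. The first step is to derive, from the approximated Newton system \eqref{eqn_approximate_Newton_step}, a bound on $\|\Delta x\pdot\Delta s\|$ and on the cross-term error $\|\tilde s\pdot\Delta x + \tilde x\pdot\Delta s - (s\pdot\Delta x + x\pdot\Delta s)\|$; the hypotheses \eqref{eqn_tilde_x_definition}--\eqref{eqn_tilde_s_definition} give componentwise multiplicative control $|\tilde x_i/x_i - 1|\le\delta(1+\delta)$ (similarly for $s$), which should translate the approximation error into a term proportional to $\delta(1+\delta)^2\alpha\sqrt{(1+\alpha)/(1-\alpha)}$ — precisely the first summand in the definition of $\sigma$. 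Combined with the usual $\|\Delta x\pdot\Delta s\|\le \tfrac{1}{4}\|\cdot\|$-type bound (using $\Delta x^\top\Delta s\ge 0$ from \eqref{eqn_Delta_x_Delta_s_positive}) and the quadratic self-term $(1+\delta)^2\alpha^2/(2(1-\alpha))$, one assembles $\|x^+\pdot s^+ - \tau e\| \le \sigma\tau$.

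The second step is the rescaling to $\tau^+ = (1-\beta/\sqrt{2n})\tau$. Writing $x^+\pdot s^+ - \tau^+ e = (x^+\pdot s^+ - \tau e) + \tfrac{\beta}{\sqrt{2n}}\tau e$ and using $\|e\|=\sqrt{2n}$ gives $\|x^+\pdot s^+ - \tau^+ e\| \le (\sigma + \beta)\tau$. One must also relate $\tau$ to the new duality measure: from $\xi\le\alpha$ we get $e^\top(x^+\pdot s^+) = $ (something close to) $2n\tau$ up to a $\pm\alpha\sqrt{2n}$ correction, and the specific choice $\beta = (\alpha-\sigma)/(1+\alpha/\sqrt{2n})$ is exactly what makes $(\sigma+\beta)\tau \le \alpha\tau^+$, i.e.\ $\xi^+\le\alpha$. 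Positivity $(x^+,s^+)>0$ then follows by the standard argument: $\xi^+\le\alpha<1$ forces every component of $x^+\pdot s^+$ to be positive, and continuity along the segment from $(x,s)$ (where positivity holds) rules out a sign change. This closes the induction, so every iterate stays in $\mathcal{F}_{\mathrm{cen}}(\alpha,\tau)$ with the current $\tau$.

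The third step is bookkeeping on the duality gap: after $k$ iterations $\tau = (1-\beta/\sqrt{2n})^k$, and $\xi\le\alpha$ yields $x^\top s \le (2n+\alpha\sqrt{2n})\tau$. Setting the right-hand side $\le\epsilon$ and solving for $k$ gives exactly \eqref{eqn_number_iterations}; the ``exact'' claim follows because the algorithm runs a fixed, precomputed number of iterations rather than testing a stopping criterion. I expect the main obstacle to be Step~1 — specifically, getting the constant $\sigma$ to come out \emph{exactly} in the stated form. The delicate point is that the error enters both multiplicatively (through $\tilde s\pdot\Delta x + \tilde x\pdot\Delta s$ on the left of \eqref{eqn_approximate_c}) and through the scaling used to symmetrize the system; one has to choose the right scaled variables (e.g.\ $v = \sqrt{x\pdot s}/\sqrt\tau$ or a $\tilde x,\tilde s$-weighted analogue) so that the perturbation bound is tight enough that $\sigma<\alpha$ remains feasible for a nonempty range of $(\alpha,\delta)$, and so that it collapses to the exact-step $\sigma = \alpha^2/(2(1-\alpha))$ when $\delta\to 0$. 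Verifying that the footnote's numbers ($\alpha=0.3,\delta=0.15$) indeed satisfy $\mu\in(0,1)$ and $\sigma<\alpha$ serves as the sanity check that the constants are internally consistent.
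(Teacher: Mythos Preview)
Your proposal is essentially the paper's proof. The paper organizes Step~1 via two lemmas: first bounding $\|(\tilde x\pdot\tilde s)^{\pm 1/2}\|_\infty$ by $(1+\delta)(1\pm\alpha)^{\pm 1/2}\tau^{\pm 1/2}$, then introducing exactly the $\tilde x,\tilde s$-weighted scaled variables you anticipate, namely $r=(\tilde x\pdot\tilde s)^{-1/2}\pdot(\tau e-x\pdot s)$, $p_r=\sqrt{\tilde s/\tilde x}\,\Delta x$, $q_r=\sqrt{\tilde x/\tilde s}\,\Delta s$, and deriving $\|p_r\|+\|q_r\|\le\sqrt{2}\|r\|$ and $\|\Delta x\pdot\Delta s\|\le\|r\|^2/2$. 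The expansion $x^+\pdot s^+=\tau e+(s-\tilde s)\pdot\Delta x+(x-\tilde x)\pdot\Delta s+\Delta x\pdot\Delta s$ then yields $\|x^+\pdot s^+-\tau e\|\le\sigma\tau$ with the stated $\sigma$; your Step~2 arithmetic matches the paper's.

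Two points where the paper differs slightly from your outline. First, positivity: the paper does \emph{not} use a continuity argument along the segment. Instead it proves $\|\Delta x/x\|\le\eta=(1+\delta)^3\alpha/(1-\alpha)=\mu$ and likewise for $s$; the hypothesis $\mu\in(0,1)$ is there precisely so that $|\Delta x_i/x_i|<1$ componentwise, giving $x^+>0$ directly. Your route would work but leaves the role of the $\mu<1$ assumption unexplained. Second, the ``exact'' claim is stronger than ``the algorithm runs a fixed number of steps'': the paper also establishes the lower bound $(2n-\alpha\sqrt{2n})(1-\beta/\sqrt{2n})^k\le (x^k)^\top s^k$ and checks $(\sqrt{2n}-\alpha)(\sqrt{2n}+\sigma)\le(\sqrt{2n}+\alpha)^2$ (using $\sigma\le 3\alpha$ for the chosen parameters) to rule out reaching $\epsilon$ one iteration early.
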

\begin{proof}
    See Appendix \ref{sec_theorem_sqrt_n}.
\end{proof}

\subsection{$O(n)$ number of rank-1 updates} 
Another main contribution of this article is the analysis of the total number of rank-1 updates during the whole $\mathcal{N}_\mathrm{iter}$ iterations (given by \eqref{eqn_number_iterations}), which is proven to be bounded by $O(n)$, as presented in the below theorem.

Let $I_{\gamma}^k,~I_{\theta}^k,~I_{\phi}^k,~I_{\psi}^k$ denote the $I_{\gamma},~I_{\theta},~I_{\phi},~I_{\psi}$ (from the definition \eqref{eqn_sets_I_gamma_theta_phi_psi}) at the $k$th iterate, respectively.

\begin{theorem}\label{theorem_num_rank}
The total number of rank-1 updates required by Algorithm \ref{alg_n_3} is bounded, 
\begin{equation}\label{eqn_number_ranks}
    \mathcal{N}_{\mathrm{rank}-1}=\sum_{k=1}^{\mathcal{N}}|I_{\gamma}^k\cup I_{\theta}^k\cup I_{\phi}^k\cup I_{\psi}^k|\leq\left\lceil\frac{4\eta(\mathcal{N}_\mathrm{iter}-1)\sqrt{n}}{\left(1-\eta\right)\log(1+\delta)}\right\rceil.
\end{equation}
\end{theorem}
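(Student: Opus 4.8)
The plan is to bound the total number of rank-1 updates by tracking, for each coordinate, how often the approximate pair $(\tilde{x}_i,\tilde{s}_i)$ must be reset to the true value $(x_i,s_i)$. A reset of coordinate $i$ in (for instance) the $\tilde{\gamma}$-block at iteration $k$ happens precisely when $\tilde{\gamma}_i/\gamma_i$ has drifted outside $[\tfrac{1}{1+\delta},1+\delta]$, i.e. when the ratio has changed by a multiplicative factor of at least $1+\delta$ since the last reset of that coordinate. So the first step is to establish a \emph{per-iteration drift bound}: show that for every coordinate $i$ and every iteration $k$, $\gamma_i^{k+1}/\gamma_i^k$ (and likewise for $\theta,\phi,\psi$) lies in an interval $[1-\kappa_k,\,1+\kappa_k]$ whose half-width $\kappa_k$ is $O(1/\sqrt{n})$. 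This is where the $O(\sqrt{n})$ iteration complexity is exploited: because $\tau$ decreases by only the factor $1-\beta/\sqrt{2n}$ per step and the iterates stay in $\mathcal{F}_{\mathrm{cen}}(\alpha,\tau)$, the full approximated-Newton step moves each multiplier by a relative amount controlled by $\eta$ (the step-size quantity) divided by $\sqrt{n}$. Concretely I would bound $\|x^+\pdot s^+ - x\pdot s\|$ and use the centrality bound $\|x\pdot s-\tau e\|\le\alpha\tau$ together with $(x,s)>0$ to convert this into a coordinatewise relative bound; the constant $\eta$ and the $\tfrac{1}{\sqrt n}$ factor should come out of the analysis already done for Theorem~\ref{theorem_n_3_iteration}.

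The second step is a \textbf{potential / counting argument}. Fix a coordinate $i$ and one of the four blocks, say $\gamma$. Between two consecutive resets of $\tilde{\gamma}_i$ at iterations $k_1<k_2$, the ratio $\tilde{\gamma}_i/\gamma_i$ equals $\gamma_i^{k_1}/\gamma_i^{k_2}$ (the numerator is frozen), and by definition of the reset this ratio has left $[\tfrac{1}{1+\delta},1+\delta]$, so $\prod_{k=k_1}^{k_2-1}(\gamma_i^{k+1}/\gamma_i^k)$ differs from $1$ by a factor $\ge 1+\delta$. Taking logarithms, $\sum_{k=k_1}^{k_2-1}\bigl|\log(\gamma_i^{k+1}/\gamma_i^k)\bigr|\ge\log(1+\delta)$. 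Summing over all reset intervals for coordinate $i$ shows that the number of resets of $\tilde{\gamma}_i$ over all $\mathcal{N}_\mathrm{iter}$ iterations is at most $\frac{1}{\log(1+\delta)}\sum_{k=1}^{\mathcal{N}_\mathrm{iter}-1}\bigl|\log(\gamma_i^{k+1}/\gamma_i^k)\bigr|$. By step one, each term $|\log(\gamma_i^{k+1}/\gamma_i^k)|\le -\log(1-\kappa_k)\le \frac{\eta}{(1-\eta)}\cdot O(1/\sqrt n)$ (using $-\log(1-t)\le t/(1-t)$); summing over the $\mathcal{N}_\mathrm{iter}-1$ iterations gives a per-coordinate, per-block reset count of $O\!\bigl(\tfrac{\eta(\mathcal{N}_\mathrm{iter}-1)}{(1-\eta)\sqrt n\,\log(1+\delta)}\bigr)$.

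The final step is to aggregate. Summing the per-coordinate bound over $i=1,\dots,n$ and over the four blocks $\gamma,\theta,\phi,\psi$ multiplies by $4n$, yielding
\[
\mathcal{N}_{\mathrm{rank}-1}=\sum_{k=1}^{\mathcal{N}_\mathrm{iter}}|I_\gamma^k\cup I_\theta^k\cup I_\phi^k\cup I_\psi^k|
\;\le\;\sum_{k=1}^{\mathcal{N}_\mathrm{iter}}\bigl(|I_\gamma^k|+|I_\theta^k|+|I_\phi^k|+|I_\psi^k|\bigr)
\;\le\;\Bigl\lceil\frac{4\eta(\mathcal{N}_\mathrm{iter}-1)\sqrt n}{(1-\eta)\log(1+\delta)}\Bigr\rceil,
\]
after absorbing the $n/\sqrt n=\sqrt n$ into the stated bound and taking the ceiling. (Since $\mathcal{N}_\mathrm{iter}=O(\sqrt n)$, this is $O(n)$.) The main obstacle I anticipate is step one: making the per-iteration multiplicative drift bound on each $\gamma_i,\theta_i,\phi_i,\psi_i$ fully rigorous with an \emph{explicit} constant $\eta<1$, uniformly over all iterations. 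This requires carefully propagating the centrality invariant \eqref{eqn_next_iterate_center} from Theorem~\ref{theorem_n_3_iteration} into a lower bound on each coordinate of $x$ and $s$ (so that the additive step $\eta\Delta x_i$ translates into a controlled \emph{relative} change) and controlling $\|\Delta x\|_\infty$, $\|\Delta s\|_\infty$ via the right-hand side of \eqref{eqn_approximate_c} together with $\Delta x^\top\Delta s\ge 0$; the boundary coordinates $\phi,\psi$ where $\Delta\phi=-\Delta z$, $\Delta\psi=\Delta z$ may need separate handling since $\Delta z$ is not obviously small coordinatewise without invoking the conditioning of $B$.
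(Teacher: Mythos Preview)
Your counting argument in Step~2 is essentially the paper's: between two consecutive resets of $\tilde\gamma_i$ at iterations $k_1<k_2$ one has $|\log\gamma_i^{k_2}-\log\gamma_i^{k_1}|\ge\log(1+\delta)$, and this is controlled by $\sum_{k=k_1}^{k_2-1}|\gamma_i^{k+1}-\gamma_i^k|/\gamma_i^k$ up to the factor $1/(1-\eta)$. That part is fine.

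The genuine gap is Step~1. The per-coordinate relative drift is \emph{not} $O(1/\sqrt n)$; what the analysis of Theorem~\ref{theorem_n_3_iteration} actually yields (see \eqref{eqn_p_q_r_e}) is the \emph{vector} bound
\[
\left\|\frac{\Delta x}{x}\right\|_2\le\eta,\qquad \left\|\frac{\Delta s}{s}\right\|_2\le\eta,
\]
with $\eta=\frac{(1+\delta)^3\alpha}{1-\alpha}$ a constant independent of $n$. A single coordinate can absorb the full budget, i.e.\ $|\Delta\gamma_i/\gamma_i|$ may be as large as $\eta$, so your claimed bound $\kappa_k=O(1/\sqrt n)$ is false in general. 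If you proceed with the only valid coordinatewise bound $|\Delta\gamma_i/\gamma_i|\le\eta$, your aggregation in Step~3 multiplies by $4n$ and gives a total of order $n\,\eta(\mathcal N_{\mathrm{iter}}-1)/((1-\eta)\log(1+\delta))$, which is a factor $\sqrt n$ too large.

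The fix is to aggregate over coordinates \emph{before} applying a per-iteration bound, using Cauchy--Schwarz:
\[
\sum_{i=1}^{n}\frac{|\gamma_i^{k+1}-\gamma_i^k|}{\gamma_i^k}\;\le\;\sqrt n\,\left\|\frac{\gamma^{k+1}-\gamma^k}{\gamma^k}\right\|_2\;\le\;\sqrt n\,\eta.
\]
Combining this with your Step~2 bound summed over $i$ gives $\sum_k|I_\gamma^k|\le\frac{\eta(\mathcal N_{\mathrm{iter}}-1)\sqrt n}{(1-\eta)\log(1+\delta)}$, and likewise for the other three blocks; this is exactly how the paper closes the argument. Note also that the ``main obstacle'' you flag---lower-bounding each $x_i,s_i$ and controlling $\|\Delta z\|_\infty$---is not needed: the relative bound $\|\Delta x/x\|_2\le\eta$ is already a direct consequence of the centrality estimates in the proof of Theorem~\ref{theorem_n_3_iteration}, and it handles all four blocks (including $\phi,\psi$) simultaneously since $\Delta\phi,\Delta\psi$ are components of $\Delta s$.
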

\begin{proof}
    See Appendix \ref{sec_theorem_num_rank}.
\end{proof}

\subsection{Algorithm Implementation}
Based on the same framework (feasible IPMs) as Algorithm \ref{alg_n_3_dot_5} (using the same initialization), Theorems \ref{theorem_n_3_iteration} and  \ref{theorem_num_rank}, we outline an algorithm that adopts the \textit{approximated Newton step}, as shown in Algorithm \ref{alg_n_3}.

\begin{theorem}
 The total time complexity of Algorithm \ref{alg_n_3} is $O(n^3)$.   
\end{theorem}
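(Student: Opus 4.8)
The plan is to account separately for the three cost contributors that make up the running time of Algorithm \ref{alg_n_3}: the one-time setup before iterating, the per-iteration linear-algebra work, and the cumulative cost of maintaining $M=B^{-1}$ through rank-1 updates. For the setup, caching $2\lambda\tilde H$ and $\tilde h$ is $O(n^2)$, and the initial inverse $M=B^{-1}$ with $B=2\lambda\tilde H+\diag(d^0)$ must be formed once by a dense factorization, costing $O(n^3)$; this is the only full matrix inversion in the whole algorithm. I would state this explicitly so the reader sees where the $n^3$ term originates.

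Next I would bound the per-iteration cost, \emph{excluding} the rank-1 updates. By Theorem \ref{theorem_n_3_iteration} the number of iterations is exactly $\mathcal{N}_\mathrm{iter}$ given by \eqref{eqn_number_iterations}, which is $O(\sqrt n)$ since $-\log(1-\beta/\sqrt{2n})\ge \beta/\sqrt{2n}$. Within each iteration, once $M$ is current, computing $\Delta z$ via \eqref{eqn_delta_z_update} is a single matrix-vector product, $O(n^2)$; forming $(\Delta\gamma,\Delta\theta,\Delta\phi,\Delta\psi)$ from \eqref{eqn_approximated_delta_gamma_etc_update}, updating the iterate, updating $\tau$, and running the index-selection loop \eqref{procedure_gamma_theta_phi_psi} are all $O(n)$. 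Hence the iteration overhead other than rank-1 updates totals $O(\sqrt n\cdot n^2)=O(n^{2.5})$, which is dominated by $O(n^3)$.

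Finally I would handle the rank-1 update cost, which is the crux of the argument. Each rank-1 update of $M$ via \eqref{eqn_M_update_b} costs $O(n^2)$, so the cumulative cost is $O(n^2)\cdot\mathcal{N}_{\mathrm{rank}-1}$. By Theorem \ref{theorem_num_rank}, $\mathcal{N}_{\mathrm{rank}-1}\le\big\lceil \tfrac{4\eta(\mathcal{N}_\mathrm{iter}-1)\sqrt n}{(1-\eta)\log(1+\delta)}\big\rceil$, and since $\mathcal{N}_\mathrm{iter}=O(\sqrt n)$ while $\eta,\delta$ are data-independent constants, this is $O(\sqrt n\cdot\sqrt n)=O(n)$. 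Therefore the total rank-1 cost is $O(n^2)\cdot O(n)=O(n^3)$. Adding the three contributions, $O(n^3)+O(n^{2.5})+O(n^3)=O(n^3)$, completes the proof.

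The main obstacle is not in this theorem itself — given Theorems \ref{theorem_n_3_iteration} and \ref{theorem_num_rank} the bookkeeping is routine — but in making sure every per-iteration step is genuinely matrix-free (no hidden $O(n^3)$ operation sneaks in, e.g. one must never refactor $B$ from scratch but only touch $M$ through Sherman–Morrison), and in confirming that the constants $\alpha,\beta,\delta,\eta$ can be chosen simultaneously consistent with both \eqref{eqn_alpha_delta_beta} and the hypotheses of Theorem \ref{theorem_num_rank}, so that both $O(\sqrt n)$ and $O(n)$ bounds hold for the \emph{same} parameter choice used in Algorithm \ref{alg_n_3}.
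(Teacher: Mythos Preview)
Your proposal is correct and follows essentially the same approach as the paper: invoke Theorem \ref{theorem_n_3_iteration} for $\mathcal{N}_\mathrm{iter}=O(\sqrt{n})$, invoke Theorem \ref{theorem_num_rank} for $\mathcal{N}_{\mathrm{rank}-1}=O(\mathcal{N}_\mathrm{iter}\sqrt{n})=O(n)$, and multiply by the $O(n^2)$ cost of each rank-1 update. Your accounting is in fact more careful than the paper's own proof, which omits the explicit $O(n^3)$ initialization and the $O(n^{2.5})$ non-rank-1 per-iteration overhead that you correctly identify as dominated terms.
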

\begin{proof}
    First, Algorithm \ref{alg_n_3} exactly requires $\mathcal{N}_\mathrm{iter}$ iterations, given in \eqref{eqn_number_iterations}, which
    is proved to be $O(\sqrt{n})$ (as $-\log(1-\beta/\sqrt{2n})\geq\beta/\sqrt{2n}$ when $0<\beta/\sqrt{2n}<1$). Then, by Theorem \ref{theorem_num_rank}, the upper bound of total number of 
    rank-1 updates 
    is given by \eqref{eqn_number_ranks}, which is $O(\mathcal{N}_\mathrm{iter}\times \sqrt{n})=O(n)$. The time complexity of one rank-1 update is $O(n^2)$. Therefore, the total time complexity of Algorithm  \ref{alg_n_3} is $O(n^3)$, which completes the proof.
\end{proof}

\begin{algorithm}
    \caption{An $O(n^3)$ Feasible IPM Algorithm for }\label{alg_n_3}
    \textbf{Input}: Given the Box-QP data $(H,h)$, the dimension $n$. 
    \vspace*{.1cm}\hrule\vspace*{.1cm}
    \textbf{if }$\|h\|_\infty=0$, \textbf{return} $z^*=0$;\\
    \textbf{otherwise},\\
    Choose $\alpha=0.3$, $\delta=0.15$, and an optimality level $\epsilon$ (such as $\epsilon=10^{-6}$). Set $\lambda=\frac{\alpha}{\sqrt{2n}}$. Cache $2\lambda\tilde{H}=\frac{2\lambda}{\|h\|_\infty}H$ and $\tilde{h}=\frac{1}{\|h\|_\infty}h$. Initialize $(z,\gamma,\theta,\phi,\psi)$ from \eqref{eqn_initialization_stragegy} and $\tau=1$. Compute $\sigma=\sqrt{2}\delta(1+\delta)^2\alpha\sqrt{\frac{1+\alpha}{1-\alpha}}+\frac{(1+\delta)^2\alpha^2}{2(1-\alpha)}$ and $\beta=\frac{\alpha-\sigma}{1+\frac{\alpha}{\sqrt{2n}}}$, and the required number of iterations  $\mathcal{N}_\mathrm{iter}=\!\left\lceil\frac{\log\left(\frac{2n+\alpha\sqrt{2n}}{\epsilon}\right)}{-\log\left(1-\frac{\beta}{\sqrt{2n}}\right)}\right\rceil$;
    \\ \\
$\tilde{\gamma}\leftarrow\gamma,~\tilde{\theta}\leftarrow\theta,~\tilde{\phi}\leftarrow\phi,~\tilde{\psi}\leftarrow\psi$;\\  \\
    $d=\frac{\tilde{\gamma}}{\phi}+\frac{\tilde{\theta}}{\psi}$;\\ \\
$M=\left(2\lambda\tilde{H}+\diag(d)\right)^{\!-1}$;\\ \\
    \textbf{for} $k=1,2,\cdots{},\mathcal{N}_\mathrm{iter}$, then 
    \begin{enumerate}[label*=\arabic*., ref=\theenumi{}]       
        \item Update $(\tilde{\gamma},~\tilde{\theta},~\tilde{\phi},~\tilde{\psi})$ and assign the sets $(I_{\gamma},I_{\theta},I_{\phi},I_{\psi})$ as per \eqref{procedure_gamma_theta_phi_psi}\\
        \item Update $M$ and $d$ as per \eqref{eqn_M_update}\\
        \item Compute $\Delta z$ as per \eqref{eqn_delta_z_update} and $(\Delta \gamma, \Delta\theta,\Delta\phi,\Delta\psi)$ as per \eqref{eqn_approximated_delta_gamma_etc_update}\\
        \item $(z,\gamma,\theta,\phi,\psi)\leftarrow(z,\gamma,\theta,\phi,\psi) + (\Delta z,\Delta\gamma,\Delta\theta,\Delta\phi,\Delta\psi)$;\\
         \item $\tau\leftarrow\left(1-\frac{\beta}{\sqrt{2n}}\right)\tau$;
    \end{enumerate}
    \textbf{end}\\
    \textbf{return} $z^*=z$;
\end{algorithm}


\section{Numerical Validations}
Algorithms \ref{alg_n_3_dot_5} and \ref{alg_n_3} are implemented in C code and executed in Julia via the Julia-C interface ccall. Code is available at \url{https://github.com/liangwu2019/N3QP}. The reported numerical results were obtained on a Dell Precision 7760 (equipped with 2.5 GHz 16-core Intel Core i7-11850H and 128GB RAM) running Julia v1.10 on Ubuntu 22.04.

\subsection{Validation of number of iterations and rank-1 updates}
To validate Theorems \ref{theorem_n_3_iteration} and \ref{theorem_num_rank}, namely verifying that the number of iterations and rank-1 updates in Algorithm \ref{alg_n_3} is below $\mathcal{N}_\mathrm{iter}$ and $\mathcal{N}_{\mathrm{rank}-1}$, which are only dependent of the problem dimension $n$ and the optimality level $\epsilon$, we first generate random Box-QP examples with different sizes ($n=100, 200$). Then, we examine how many number of iterations and rank-1 updates are required by Algorithm \ref{alg_n_3} to solve Box-QPs ($n=100, 200$) to the optimality level $\epsilon=10^{-6}, 10^{-8}$.

Algorithm \ref{alg_n_3} belongs to feasible IPMs and thus the duality gap can represent the accuracy of iterates. 

Figs.\ \ref{fig_num_iters} and  \ref{fig_num_ranks} describe the decreasing relationship between the duality gap and the number of iterations and the number of rank-1 updates, respectively. Fig.\ \ref{fig_num_iters} demonstrates that Algorithm \ref{alg_n_3} arrives at the optimality level $\epsilon=10^{-6},10^{-8}$ in the \textbf{same} iterations with the theoretical bound for both $n=100$ and $200$. Fig.\ \ref{fig_num_ranks} demonstrates that Algorithm \ref{alg_n_3} requires \textbf{fewer} rank-1 updates than the theoretical bound for both $n=100$ and $200$. This numerically validates Theorems \ref{theorem_n_3_iteration} and  \ref{theorem_num_rank} for this case study.

\begin{figure}[!htbp]
    \centering
    \includegraphics[width=1.0\linewidth]{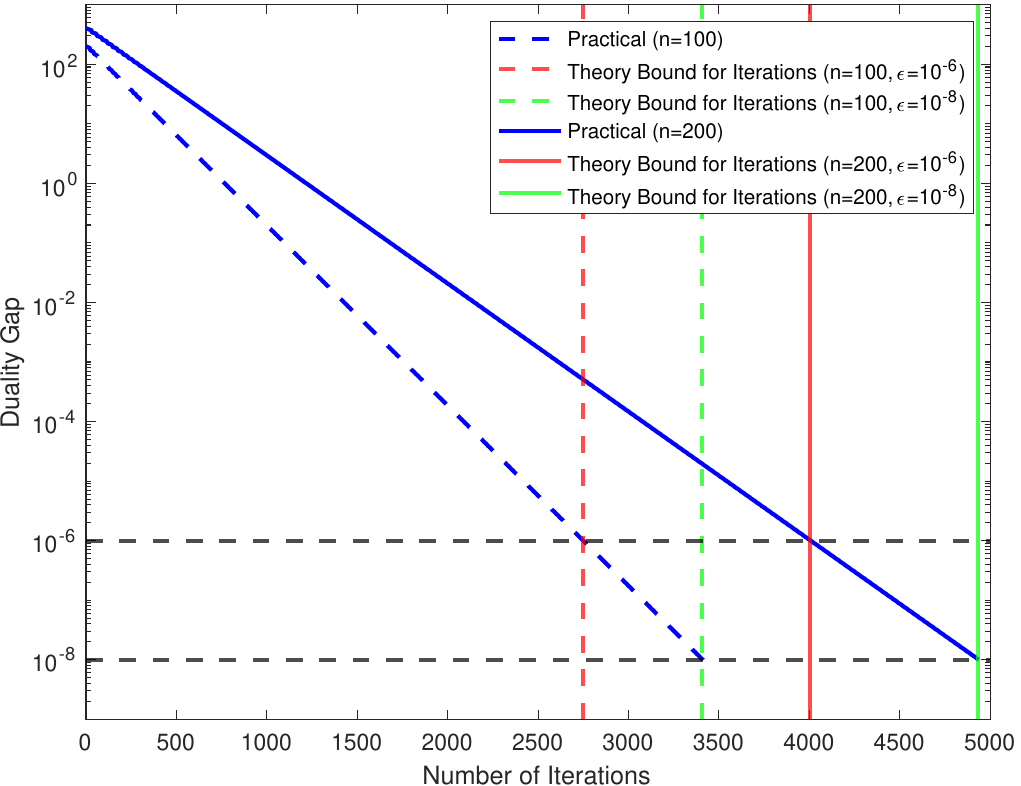}
    
    \vspace{-0.2cm} 
    \caption{Duality gap vs.\ the number of iterations and $\mathcal{N}_\mathrm{iter}$ (the theory bound for the number of iterations) by applying  Algorithm \ref{alg_n_3} to Box-QP for $n=100,200$ and $\epsilon=10^{-6},10^{-8}$.}
    \label{fig_num_iters}
\end{figure}

\begin{figure}[!htbp]
    \centering
    \includegraphics[width=1.0\linewidth]{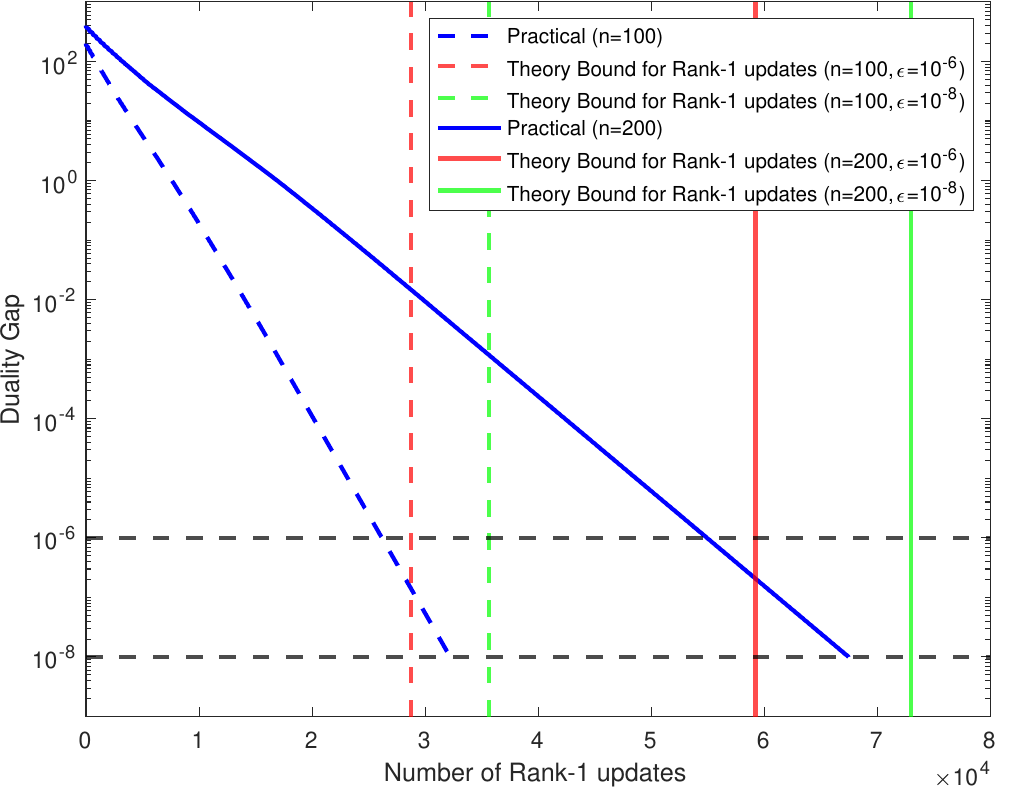}
    
    \vspace{-0.3cm} 
    \caption{Duality gap vs.\ the number of rank-1 updates and $\mathcal{N}_{\mathrm{rank}-1}$ (the theory bound for the number of rank-1 updates) by applying  Algorithm \ref{alg_n_3} to Box-QP for $n=100,200$ and $\epsilon=10^{-6},10^{-8}$.}
    \label{fig_num_ranks}
\end{figure}

\subsection{Numerical comparison of Algorithms \ref{alg_n_3_dot_5} and \ref{alg_n_3}}
It is well known that solving linear systems usually has the $O(n^3)$ time complexity, while solving QPs via IPMs is more expensive with  $O(n^{3.5})$  time complexity as in Algorithm \ref{alg_n_3_dot_5}. One main contribution of this article is the development of an implementable Algorithm \ref{alg_n_3}, which achieves the $O(n^3)$ time complexity by using the \textit{approximated Newton step}. Moreover, the approximated Newton step eliminates the need for a linear system solver (e.g., Cholesky factorization), rendering Algorithm \ref{alg_n_3} \textit{matrix-free} (or \textit{inverse-free}) during iterations. While a one-time matrix inversion is required before starting iterations, it can be precomputed offline. Algorithm \ref{alg_n_3} then relies solely on rank-1 updates of a symmetric positive definite matrix as shown in  \eqref{eqn_M_update_b}: $M\leftarrow M -\frac{\Delta}{1+\Delta \cdot M_{i,i}}M_{\cdot,i}M_{\cdot,i}^\top$. 

While Algorithm \ref{alg_n_3} has a lower time complexity than Algorithm \ref{alg_n_3_dot_5} in theory, their practical computing performance depends on the implementation of their most expensive steps: the rank-1 updates and Cholesky factorization of a symmetric positive definite matrix in Algorithms \ref{alg_n_3} and \ref{alg_n_3_dot_5}, respectively. To do a fair numerical comparison, the implementations of Algorithm \ref{alg_n_3} and Algorithm \ref{alg_n_3_dot_5} are based on the same OpenBLAS v0.3.30 library on the same computing platform. Specifically, in our implementation,
\begin{itemize}
    \item Algorithm \ref{alg_n_3_dot_5} calls the \textbf{LAPACKE\_dposv()} function for the Cholesky factorization step;
    \item Algorithm \ref{alg_n_3} calls the \textbf{cblas\_dsyr()} function for the rank-1 updates step.
\end{itemize}
In Fig.\  \ref{fig_run_time}, these variants are denoted as \textit{Algorithm \ref{alg_n_3_dot_5} (dposv)} and \textit{Algorithm \ref{alg_n_3} (dsyr)}, respectively. Fig.\  \ref{fig_run_time} plots their CPU time (s) spent on solving randomly generated Box-QP problems with different problem dimensions ranging from $n=50$ to $1000$. When $n\geq100$, Algorithm \ref{alg_n_3} has a smaller computation time than Algorithm \ref{alg_n_3_dot_5}, which is consistent with the theoretical complexity: $O(n^3)$ is faster than $O(n^{3.5})$. In our practical implementation, Algorithm \ref{alg_n_3} achieves around 3-4 fold speedup over Algorithm \ref{alg_n_3_dot_5}.

\begin{figure}[!htbp]
    \centering
    \includegraphics[width=1.0\linewidth]{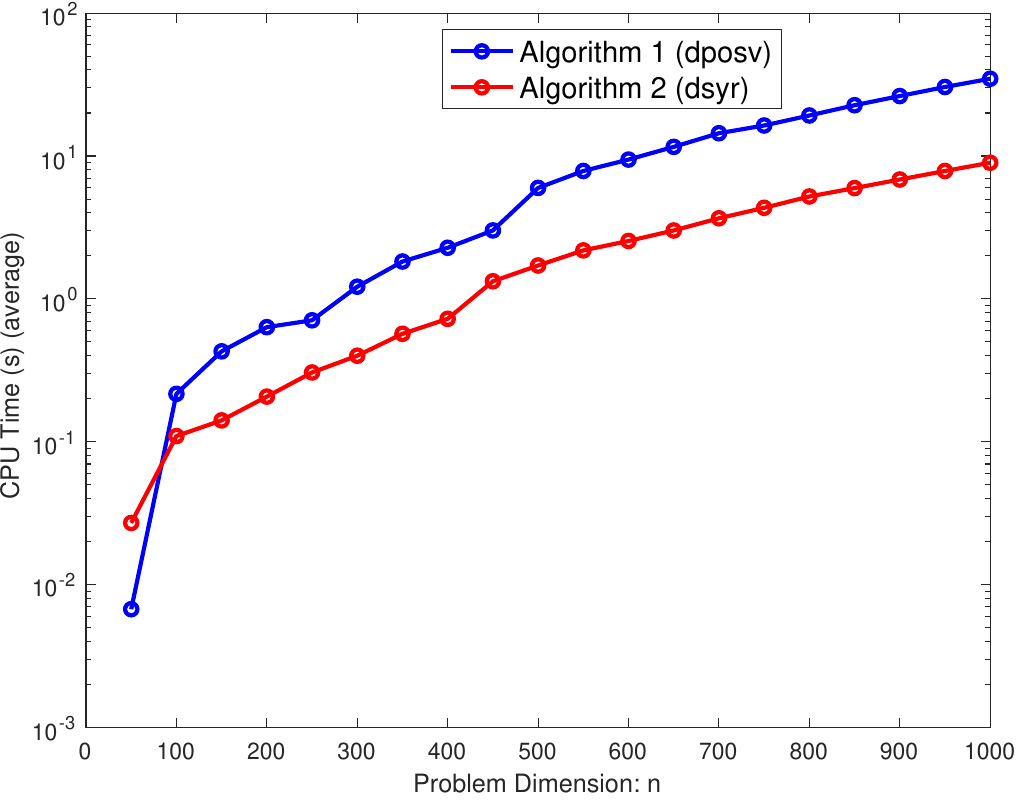}

\vspace{-0.2cm}
    
    \caption{CPU time (s) spent on random Box-QP problems with different dimensions.}
    \label{fig_run_time}
\end{figure}

\section{Applications}
\subsection{Online $\ell_1$-penalty soft-constrained MPC}
Among the industrial practice of online MPC, two issues are often encountered: \textit{i)} possible online infeasibility due to unknown disturbances or modeling errors, and \textit{ii)} a missing execution time certificate for solving the MPC problem. To solve the two issues simultaneously, this subsection adopts the $\ell_1$ penalty presented in Subsection \ref{sec_ell_1_penalty}, which transforms the resulting strictly convex QP (from condensing MPC formulation) to a Box-QP, and our proposed Algorithm \ref{alg_n_3} offers certified computation time complexity for Box-QP problems.

We consider linear MPC problems for tracking,
\begin{subequations}\label{problem_MPC_tracking}
    \begin{align}
        \min~&\frac{1}{2}\sum_{k=0}^{N_p-1}\left\|W_y\! \left(Cx_{k+1}-r(t)\right)\right\|_{2}^{2} + \left\|W_{\Delta u}  \Delta u_{k}\right\|_{2}^{2}\label{problem_MPC_tracking_obj}\\
        \mathrm{s.t.}~& x_{k+1}=A x_{k}+B u_{k}, ~k=0, \ldots, N_p-1,\label{problem_MPC_tracking_A_B}\\
        & u_{k} = u_{k-1} + \Delta u_{k},~k=0, \ldots, N_p-1,\\
        & E_x x_{k+1} + E_u u_k + E_{\Delta u} \Delta u_{k}\leq f_k,~k=0, \ldots, N_p-1,\label{problem_MPC_tracking_constraint}\\
        &x_0 = x(t), u_{-1} = u(t-1),
    \end{align}
\end{subequations}
where $x_k\in\mathbb{R}^{n_x},u_k\in\mathbb{R}^{n_u},\Delta u_k$ denote the states, the control inputs, and the control input increments along the prediction horizon $N_p$. The control goal is to minimize the tracking errors between the output $Cx_{k+1}$ and the reference signal $r(t)$ ($W_y\succ0, W_{\Delta u}\succ0$ denotes the weights for output tracking error and control input increments, respectively). $E_x x_{k+1} + E_u u_k + E_{\Delta u} \Delta u_{k}\leq f_k$ denotes the general constraints formulation such as the box constraints $u_{\min } \leq u_{k} \leq u_{\max}, x_{\min} \leq x_{k+1} \leq x_{\max},\Delta u_{\min } \leq \Delta u_{k} \leq \Delta u_{\max}$ or the terminal constraints $x_{N_p}\in\mathcal{X}_{N_p}$.

Consider the \textit{AFTI-16 MPC problem with initial infeasibility} as an example. The model \eqref{problem_MPC_tracking_A_B} in MPC is obtained by discretizing (with sampling time $0.05$ s) the open-loop unstable AFTI-F16 aircraft dynamics (from \cite{bemporad1997nonlinear}),
\[
\left\{\begin{aligned}
\dot{x} =&{\left[\begin{array}{cccc}
-0.0151 & -60.5651 & 0 & -32.174 \\
-0.0001 & -1.3411 & 0.9929 & 0 \\
0.00018 & 43.2541 & -0.86939 & 0 \\
0 & 0 & 1 & 0
\end{array}\right]} x\\&+{\left[\begin{array}{cc}
-2.516 & -13.136 \\
-0.1689 & -0.2514 \\
-17.251 & -1.5766 \\
0 & 0
\end{array}\right] }u \\
y =&{\left[\begin{array}{llll}
0 & 1 & 0 & 0 \\
0 & 0 & 0 & 1
\end{array}\right]\!x}
\end{aligned}\right.
\]
The constraints \eqref{problem_MPC_tracking_constraint} come from the input,  $|u_i| \leq 25^{\circ}$, $i = 1, 2$, and the output, $-0.5\leq y_1 \leq 0.5 $ and $-100 \leq y_2 \leq 100$. The control objective \eqref{problem_MPC_tracking_obj} is to regulate the attach angle $y_1$ to zero and make the pitch angle $y_2$ track a reference signal $r_2$ with the cost weight $W_y = \diag([10,10]), W_{\Delta u}= \diag([0.1, 0.1])$ and the prediction horizon $N_p=5$. 

By eliminating the states $\{x_k\}$, the MPC problem \eqref{problem_MPC_tracking} is condensed into a strictly convex QP \eqref{eqn_standard_QP}. When the current state already violates the output constraint (for example, $x(0)=[0,5,0,0]^\top$ ($y_1(0)=x_2(0)=5$) violates $-0.5\leq y_1 \leq 0.5$), the QP \eqref{eqn_standard_QP} has no feasible solution. Then, the QP \eqref{eqn_standard_QP} is softened to a Box-QP \eqref{eqn_box_QP} by adopting the $\ell_1$-penalty scheme ($\rho=10^3$ or $10^4$ for the output and input constraints), see Subsection \ref{sec_ell_1_penalty}. 

The resulting Box-QP has the dimension $n=40$ (the number of constraints is $4$$\times$$2$$\times$$N_p$). By choosing the optimality level $\epsilon=10^{-6}$, we can know in advance that, in Algorithm \ref{alg_n_3}, the upper bound of the number of iterations is $\mathcal{N}_\mathrm{iter}=1672$ and the number of rank-1 updates is $\mathcal{N}_{\mathrm{rank}-1}=11060$, which can in advance certify that the execution time in a choosing computing platform will be smaller than the chosen sampling time, $0.05$ s. For example, our computing platform takes $0.011$ s to do the calculation..

\begin{figure}[!htbp]
    \centering
    \includegraphics[width=1\linewidth]{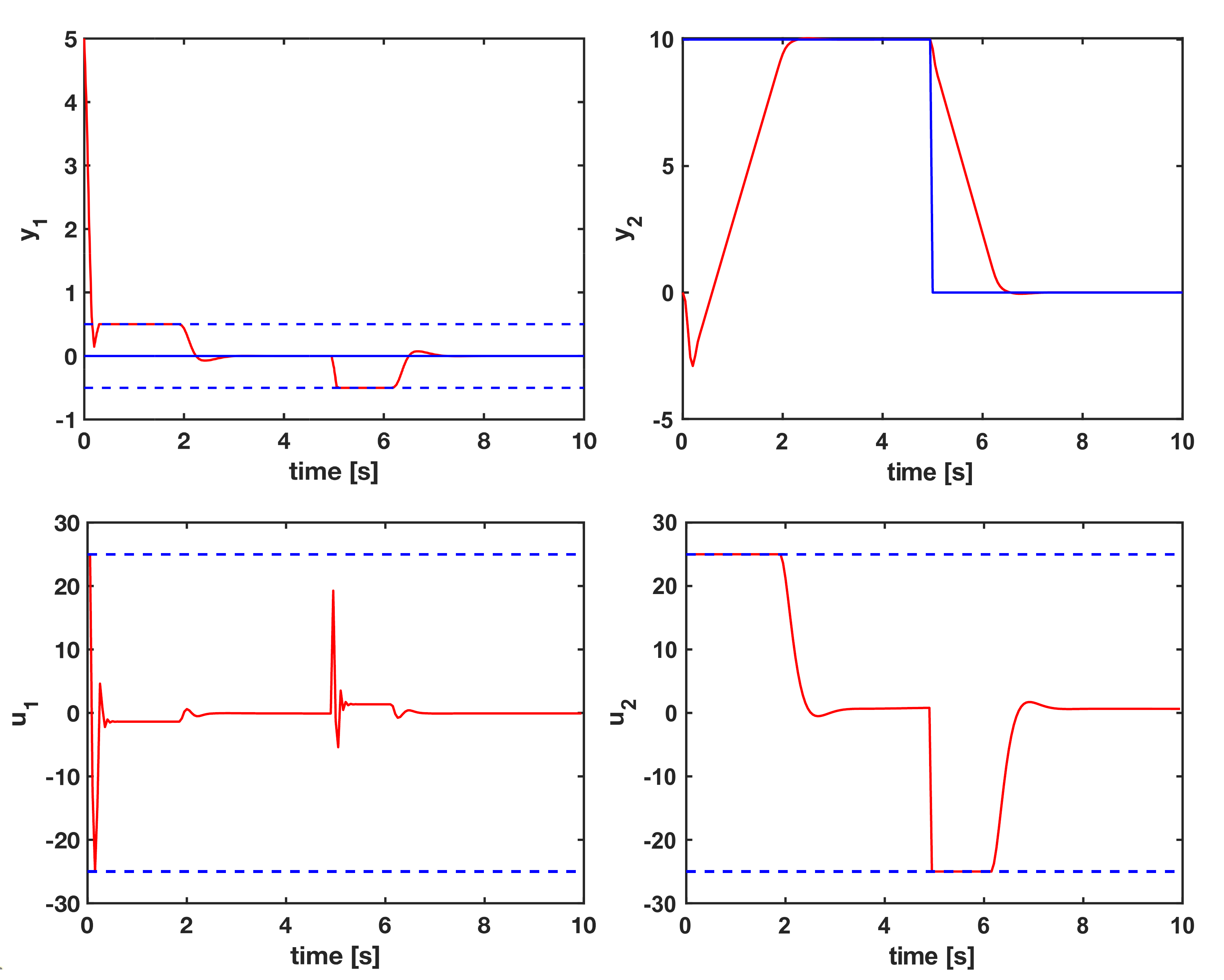}

\vspace{-0.25cm}
    
    \caption{AFTI-F16 closed-loop performance by adopting $\ell_1$-penalty soft-constrained MPC (the values of $\rho$ for output and input constraints are $10^3$ and $10^4$, respectively) using Algorithm \ref{alg_n_3}.}
    \label{fig_AFTI}
\end{figure}
The closed-loop performance is illustrated in Fig.\  \ref{fig_AFTI}, demonstrating that rapidly returning the attack angle $y_1$ to the safe region is treated as the top priority. The pitch angle $y_2$ initially moves in the opposite direction of the desired tracking signal and only begins to track it once $y_1$ has entered the safe region. The physical input constraints are never violated, thanks to the larger value of $\rho$ for input constraints and the sparsity-promoting nature of of $\ell_1$-penalty soft-constrained scheme. 

\subsection{Robust Kalman Filter: Online Lasso}
The Kalman Filter (KF) is a widely used method for state estimation in linear dynamical systems with independently and identically distributed (IID) Gaussian process and measurement noise. However, the KF's performance degrades significantly when handling an additional measurement noise term that is sparse, such as from unknown sensor failures, measurement outliers, or intentional jamming. In \cite{mattingley2010real}, a robust KF (RKF) was designed to handle additive sparse measurement noise. Specifically, consider the linear discrete-time system
\[
x_{t+1}=Ax_t+Bu_t + w_t,\quad y_t = Cx_t + v_t + z_t,
\]
where $x_t\in\mathbb{R}^{n_x}$ is the state to be estimated and $y_t\in\mathbb{R}^{n_y}$ is the available measurement at time step $t$; $w_t\in\mathbb{R}^{n_x}, v_t\in\mathbb{R}^{n_y}$ are the state and measurement Gaussian noise at time step $t$, which are IID $\mathcal{N}(0,Q)$ and $\mathcal{N}(0,R)$, respectively; and $z_t\in\mathbb{R}^m$ is an additional sparse outlier noise at time step $t$. The computing procedure of RKF in \cite{mattingley2010real} is expressed as
\begin{align}
\mathrm{Predict:} \quad \hat{x}_{t \mid t-1} & =A \hat{x}_{t-1 \mid t-1} + Bu_{t-1}, \\
P_{t \mid t-1} & =A P_{t-1 \mid t-1} A^\top + Q . \\
L & =P_{t \mid t-1} C^\top\left(C P_{t \mid t-1} C^\top+R\right)^{-1} \\
\mathrm{Update:} \qquad\ \   e_t & =y_t-C \hat{x}_{t \mid t-1}, \\
\hat{x}_{t \mid t} & =\hat{x}_{t \mid t-1}+L\left(e_t-\hat{z}_t\right), \\
P_{t \mid t} & =(I-L C) P_{t \mid t-1} .
\end{align}
where $P$ is a covariance matrix of the state estimation error and $\hat{z}_t$ is the solution of the Lasso problem,
\begin{equation}\label{problem_RKF_Lasso}
\min_{\hat{z}_t}\ (e_t-\hat{z}_t)^\top W (e_t-\hat{z}_t)+\rho \|\hat{z}_t\|_1
\end{equation} 
where $W=(I-CL)^\top R^{-1}(I-CL)+L^\top P_{t|t-1}^{-1}L$ and $\rho\geq0$ is a regularization parameter. When $\rho$ is sufficiently large, the solution of the Lasso \eqref{problem_RKF_Lasso} yields $\hat{z}_t=0$, causing the RKF estimates to coincide with the standard KF solution.

Compared to KF, RKF achieves better performance at the cost of solving the Lasso \eqref{problem_RKF_Lasso} online, whose execution time should be fast and certified to be smaller than the feedback sampling time. Subsection \ref{sec_Lasso_BoxQP} presents how to transform a Lasso problem into a Box-QP, and therefore our proposed Algorithm \ref{alg_n_3} with certified $O(n^3)$ time complexity can offer an execution time certificate in RKF applications.

Consider a three-tank system as an application example, whose system dynamic matrices are taken from \cite{forces_low_level_estimation},
\[
A = \! \left[\begin{array}{ccc}
    0.9 & 0 & 0 \\
    0 & 0.5 & 0 \\
    0.1 & 0.5 & 0.8
\end{array}\right]\!\!, \ B = \!\left[\begin{array}{c}
   0.5 \\
   0.5 \\
   0
\end{array}\right]\!\!, \ C = \!\left[\begin{array}{ccc}
     1 & 0 & 0 \\
     0 & 0 & 1
\end{array}\right]\!.
\]
The resulting Box-QP problem has the dimension $n=3$ and the optimality level is chosen as $\epsilon=10^{-6}$. Theorem \ref{theorem_n_3_iteration} indicates that the required number of iterations is $\mathcal{N}_\mathrm{iter}=304$ and Theorem \ref{theorem_num_rank} indicates that the required number of rank-1 updates is $\mathcal{N}_{\mathrm{rank}-1}=550$, which can certify the execution time in advance. For example, our computing platform takes $0.04$ ms (25 kHz). Fig.\  {\ref{Fig_RKF_three_tank} plots the relative magnitude square (RMS) error of the RKF and the standard KF estimations, and the magnitude of the RKF is reduced by around $70.06\%, 11.67\%, 57.71\%$ for the states $1,2,3$, respectively.
\begin{figure}[!htbp]
    \centering
    \includegraphics[width=1.0\linewidth]{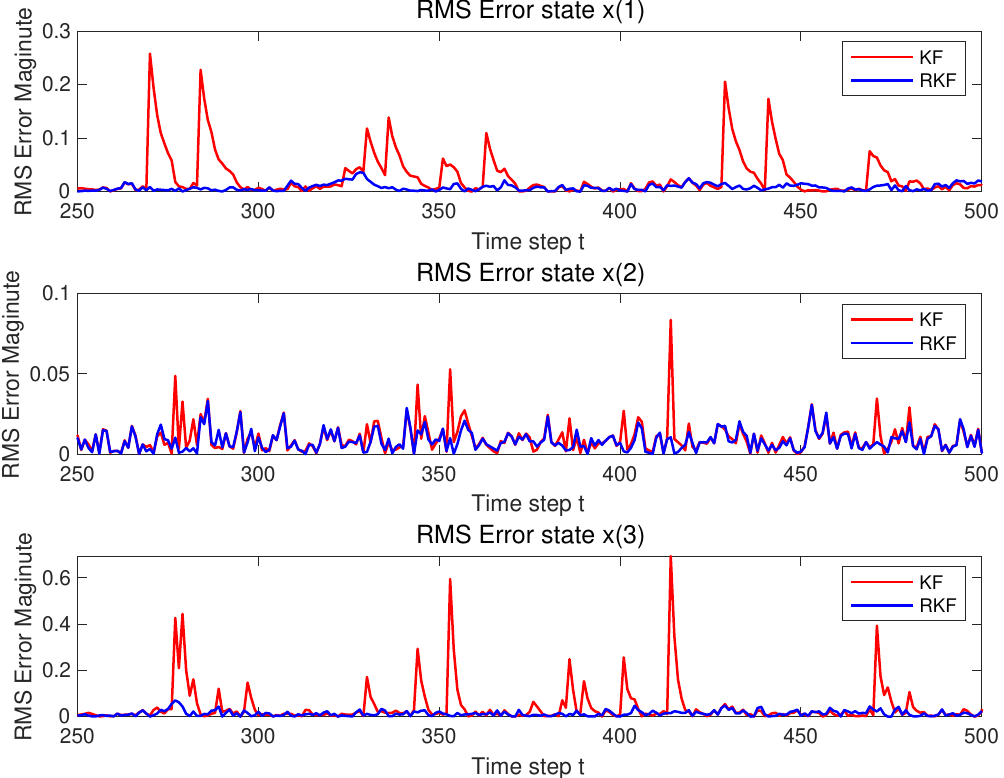}
    \caption{RMS error of the RKF and the standard KF estimations}\label{Fig_RKF_three_tank} 
\end{figure}

\section{Conclusion}
This article investigates the fundamental question of whether QPs can be solved with \textit{data-independent} $O(n^3)$ on par with direct methods, such as Cholesky, LU, and QR factorizations for solving linear systems. To this end, we for the first time propose an \textit{implementable} Box-QP algorithm with $O(n^3)$ time complexity. We demonstrate that Box-QPs have broad applicability, encompassing general strictly convex QPs, Lasso problems, and SVMs. Regarding the proposed Box-QP algorithm, we derive the explicit equation of the upper bound of its required number of iterations and the number of rank-1 updates, thereby being able to offer an execution time certificate for real-time QP-based applications. Moreover, the proposed Box-QP algorithm is \textit{simple to implement} and \textit{matrix-free} in the iterative procedures, making it suitable for embedded deployment.

 
\section{Acknowledgement}
This research was supported by the U.S. Food and Drug Administration under the FDA BAA-22-00123 program, Award Number 75F40122C00200.

\appendix
\subsection{Proof of Theorem \ref{theorem_n_3_iteration}}\label{sec_theorem_sqrt_n}
 Before proving Theorem \ref{theorem_n_3_iteration}, we first prove five lemmas.

\begin{lemma}\label{lemma_scalar_rho}
Under Assumptions \eqref{eqn_assumption_cen}, \eqref{eqn_tilde_x_definition}, and (\ref{eqn_tilde_s_definition}) of Theorem \ref{theorem_n_3_iteration}, it holds that
\begin{equation}
    \|(\Tilde{x}\pdot\Tilde{s})^{{1}/{2}}\|_\infty\leq(1+\delta) (1+\alpha)^{{1}/{2}}\tau^{{1}/{2}}
\end{equation}
and
\begin{equation}
 \|(\Tilde{x}\pdot\Tilde{s})^{-{1}/{2}}\|_\infty\leq(1+\delta)(1-\alpha)^{-{1}/{2}}\tau^{-{1}/{2}}.
\end{equation}
\end{lemma}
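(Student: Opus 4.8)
The plan is to control the $\infty$-norms of $(\tilde x\pdot\tilde s)^{1/2}$ and $(\tilde x\pdot\tilde s)^{-1/2}$ by first passing from $\tilde x,\tilde s$ to the exact iterates $x,s$ using the two-sided bounds \eqref{eqn_tilde_x_definition} and \eqref{eqn_tilde_s_definition}, and then using the centrality assumption \eqref{eqn_assumption_cen} to bound each product $x_is_i$ componentwise in terms of $\tau$. Concretely, from $\tfrac{1}{1+\delta}\le \tfrac{\tilde x_i}{x_i}\le 1+\delta$ and the same for $\tilde s_i$, multiplying gives $\tfrac{1}{(1+\delta)^2}\,x_is_i \le \tilde x_i\tilde s_i \le (1+\delta)^2\,x_is_i$ for every $i=1,\dots,2n$, hence $\|(\tilde x\pdot\tilde s)^{1/2}\|_\infty \le (1+\delta)\,\|(x\pdot s)^{1/2}\|_\infty$ and $\|(\tilde x\pdot\tilde s)^{-1/2}\|_\infty \le (1+\delta)\,\|(x\pdot s)^{-1/2}\|_\infty$.

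The second step handles the exact iterate. Since $\xi=\|x\pdot s-\tau e\|/\tau\le\alpha$, for each component $|x_is_i-\tau|\le \|x\pdot s-\tau e\|\le\alpha\tau$, so $(1-\alpha)\tau \le x_is_i \le (1+\alpha)\tau$. Taking square roots (legitimate because $(x,s)>0$ and $\alpha<1$) yields $\|(x\pdot s)^{1/2}\|_\infty\le(1+\alpha)^{1/2}\tau^{1/2}$ and $\|(x\pdot s)^{-1/2}\|_\infty\le(1-\alpha)^{-1/2}\tau^{-1/2}$. Combining with the first step gives exactly the two claimed inequalities.

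I do not expect a genuine obstacle here — the statement is essentially a bookkeeping lemma that packages the two hypotheses into a convenient form for later use. The only point requiring a little care is making sure the componentwise $\ell_\infty$ bound on $x\pdot s - \tau e$ follows from the $\ell_2$ bound $\|x\pdot s-\tau e\|\le\alpha\tau$ (it does, since $\|v\|_\infty\le\|v\|_2$), and that all square roots are of nonnegative quantities, which holds because $(x,s)>0$ and $0<\alpha<1$ force $x_is_i\ge(1-\alpha)\tau>0$.
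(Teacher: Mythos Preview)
Your proposal is correct and follows essentially the same approach as the paper's proof: both arguments use the componentwise bound $(1-\alpha)\tau\le x_is_i\le(1+\alpha)\tau$ obtained from $\|\cdot\|_\infty\le\|\cdot\|_2$ applied to \eqref{eqn_assumption_cen}, and then combine it with the two-sided ratio bounds \eqref{eqn_tilde_x_definition}--\eqref{eqn_tilde_s_definition} to pass from $(x,s)$ to $(\tilde x,\tilde s)$. The only cosmetic difference is the order of the two steps and that the paper writes the factorization as $\|(\tilde x/x)^{1/2}\|_\infty\|(\tilde s/s)^{1/2}\|_\infty\|(x\pdot s)^{1/2}\|_\infty$ rather than first multiplying the ratio bounds as you do.
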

\begin{proof}
   Assumption \eqref{eqn_assumption_cen} implies that
\begin{equation}
    \|x\pdot s-\tau e\|_\infty\leq \|x\pdot s-\tau e\|\leq\alpha\tau.
\end{equation}
That is, for every $i=1,\cdots{},2n$,
\begin{equation}
    (1-\alpha)\tau \leq x_i s_i \leq (1+\alpha) \tau.
\end{equation}
For every $i=1,\cdots{},2n$,
\begin{equation}  (x_is_i)^{{1}/{2}}\leq(1+\alpha)^{{1}/{2}}\tau^{{1}/{2}},
\end{equation}
    and
\begin{equation}
     (x_is_i)^{-{1}/{2}}\leq(1-\alpha)^{-{1}/{2}}\tau^{-{1}/{2}};
\end{equation}
    that is,
 \begin{equation}
    \|(x\pdot s)^{{1}/{2}}\|_\infty\leq(1+\alpha)^{{1}/{2}}\tau^{{1}/{2}},
\end{equation}
\begin{equation}
\|(x\pdot s)^{-{1}/{2}}\|_\infty\leq(1-\alpha)^{-{1}/{2}}\tau^{-{1}/{2}}.
\end{equation}
Assumptions (\ref{eqn_tilde_x_definition}) and (\ref{eqn_tilde_s_definition}) imply that 
\begin{equation}
\left\| \left( \frac{\tilde{x}}{x}\right)^{\!\!{1}/{2}} \right\|_\infty \leq(1+\delta)^{{1}/{2}},\quad \left\|\left(\frac{\tilde{x}}{x}\right)^{\!\!-{1}/{2}}\right\|_\infty \leq(1+\delta)^{{1}/{2}},
\end{equation}
\begin{equation}
\left\|\left(\frac{\tilde{s}}{s}\right)^{\!\!{1}/{2}}\right\|_\infty\leq(1+\delta)^{{1}/{2}}, \quad \left\|\left(\frac{\tilde{s}}{s}\right)^{\!\!{1}/{2}}\right\|_\infty\leq(1+\delta)^{{1}/{2}};
\end{equation}
thus,
\begin{equation}
    \begin{aligned}
    \|(\tilde{x} \pdot \tilde{s})^{{1}/{2}}\|_\infty &= \left\| \left( \left( \frac{\tilde{x}}{x}\right) \pdot \left( \frac{\tilde{s}}{s}\right) \pdot(x\pdot s) \right)^{\!\!{1}/{2}}\right\|_\infty\\
&\leq \left\| \left( \frac{\tilde{x}}{x}\right)^{\!\!{1}/{2}} \right\|_\infty  \left\| \left( \frac{\tilde{s}}{s}\right)^{\!\!{1}/{2}} \right\|_\infty  \left\|(x\pdot s)^{{1}/{2}}\right\|_\infty\\
&\leq(1+\delta) (1+\alpha)^{{1}/{2}}\tau^{{1}/{2}}
\end{aligned}
\end{equation}
and similarly, 
\begin{equation}
    \|(\tilde{x} \pdot \tilde{s})^{-{1}/{2}}\|_\infty\leq(1+\delta)(1-\alpha)^{-{1}/{2}}\tau^{-{1}/{2}},
  \end{equation}
    which completes the proof.
\end{proof}

\begin{lemma}\label{lemma_dx_ds_u}
Let $(\Delta z, \Delta x, \Delta s)$ be the solutions of \eqref{eqn_approximate_Newton_step} and denote
\begin{equation}
    \begin{aligned}
       &r\triangleq(\tilde{x}\pdot\tilde{s})^{-{1}/{2}} \pdot(\tau e- x\pdot s),\\
    &p_r=\sqrt{\frac{\tilde{s}}{\tilde{x}}}\Delta x,\quad q_r=\sqrt{\frac{\tilde{x}} {\tilde{s}}}\Delta s.
    \end{aligned}
\end{equation}
Under the assumptions of Theorem \ref{theorem_n_3_iteration}, it holds that
\begin{subequations}
\begin{align}
&\|r\|\leq(1+\delta)\alpha(1-\alpha)^{-{1}/{2}}\tau^{{1}/{2}},\label{eqn_p_q_r_a}\\
&\|p_r\|\leq\|r\|,\quad \|q_r\|\leq\|r\|,\label{eqn_p_q_r_b}\\
    &\|\Delta x \pdot \Delta s\|\leq \frac{\|r\|^2}{2},\label{eqn_p_q_r_c}\\
    &~\|p_r\| + \|q_r\|\leq\sqrt{2}\|r\|,\label{eqn_p_q_r_d}\\
&\left\|\frac{\Delta x}{x}\right\|\leq\eta,\quad \left\|\frac{\Delta s}{s}\right\|\leq\eta\label{eqn_p_q_r_e},
\end{align}
\end{subequations}
where $\eta\triangleq\frac{(1+\delta)^3\alpha}{1-\alpha}$.
\end{lemma}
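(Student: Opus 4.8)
The plan is to transform the approximate Newton system \eqref{eqn_approximate_Newton_step} into a symmetric, scaled coordinate system in which $p_r$ and $q_r$ become orthogonal-type components of a single vector, and then bound everything in terms of $\|r\|$. First I would rewrite \eqref{eqn_approximate_c} as $\tilde{s}\pdot\Delta x + \tilde{x}\pdot\Delta s = \tau e - x\pdot s$ and divide by $(\tilde{x}\pdot\tilde{s})^{1/2}$ to obtain $p_r + q_r = r$, using the definitions $p_r = \sqrt{\tilde{s}/\tilde{x}}\,\Delta x$ and $q_r = \sqrt{\tilde{x}/\tilde{s}}\,\Delta s$. Next, from \eqref{eqn_Delta_x_Delta_s_positive}-type reasoning (already noted in the text for the approximated step), $\Delta x^\top\Delta s = \Delta z^\top(2\lambda\tilde{H})\Delta z \ge 0$, and since $\Delta x^\top\Delta s = p_r^\top q_r$, we get $p_r^\top q_r \ge 0$. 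Combined with $p_r + q_r = r$ this gives $\|p_r\|^2 + \|q_r\|^2 \le \|p_r\|^2 + 2p_r^\top q_r + \|q_r\|^2 = \|r\|^2$, which immediately yields \eqref{eqn_p_q_r_b} and, via $(\|p_r\|+\|q_r\|)^2 \le 2(\|p_r\|^2+\|q_r\|^2)$, also \eqref{eqn_p_q_r_d}.

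For \eqref{eqn_p_q_r_a}, I would bound $\|r\| = \|(\tilde{x}\pdot\tilde{s})^{-1/2}\pdot(\tau e - x\pdot s)\| \le \|(\tilde{x}\pdot\tilde{s})^{-1/2}\|_\infty \,\|\tau e - x\pdot s\|$; the first factor is bounded by $(1+\delta)(1-\alpha)^{-1/2}\tau^{-1/2}$ from Lemma~\ref{lemma_scalar_rho}, and the second by $\alpha\tau$ from assumption \eqref{eqn_assumption_cen}, giving the claimed $(1+\delta)\alpha(1-\alpha)^{-1/2}\tau^{1/2}$. For \eqref{eqn_p_q_r_c}, note $\Delta x \pdot \Delta s = p_r \pdot q_r$ componentwise, so $\|\Delta x\pdot\Delta s\| = \|p_r\pdot q_r\| \le \frac{1}{2}(\|p_r\|^2 + \|q_r\|^2) \le \frac{1}{2}\|r\|^2$, using the elementary inequality $\|a\pdot b\| \le \frac{1}{2}(\|a\|^2+\|b\|^2)$ (which follows from $|a_ib_i|\le\frac12(a_i^2+b_i^2)$ and then $\sum(a_i^2+b_i^2)^2/4 \le (\sum a_i^2/2)^2 + \dots$; more simply $\|a\pdot b\|\le\|a\|_\infty\|b\| \le \dots$ — I would use the cleanest available chain, likely $\|p_r \pdot q_r\|\le \|p_r\|\,\|q_r\| \le \frac12(\|p_r\|^2+\|q_r\|^2)$).

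Finally, for \eqref{eqn_p_q_r_e}, I would write $\frac{\Delta x}{x} = \frac{\Delta x}{\tilde{x}}\pdot\frac{\tilde{x}}{x}$ and then relate $\frac{\Delta x}{\tilde{x}}$ to $p_r$: since $p_r = \sqrt{\tilde{s}/\tilde{x}}\,\Delta x$, we have $\frac{\Delta x}{\tilde{x}} = p_r \pdot (\tilde{x}\pdot\tilde{s})^{-1/2}$, so $\left\|\frac{\Delta x}{x}\right\| \le \left\|\frac{\tilde{x}}{x}\right\|_\infty \|(\tilde{x}\pdot\tilde{s})^{-1/2}\|_\infty \,\|p_r\| \le (1+\delta)\cdot(1+\delta)(1-\alpha)^{-1/2}\tau^{-1/2}\cdot(1+\delta)\alpha(1-\alpha)^{-1/2}\tau^{1/2} = \frac{(1+\delta)^3\alpha}{1-\alpha} = \eta$, and symmetrically for $\frac{\Delta s}{s}$. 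The main obstacle I anticipate is not any single inequality but keeping the scaling bookkeeping consistent — making sure the $\tau^{\pm 1/2}$ powers and the $(1+\delta)$ factors from the three sources (the $\tilde{x}/x$ ratio, the $\tilde{x}\pdot\tilde{s}$ normalization, and $\|r\|$ itself) combine to exactly $\eta$ without slack, and double-checking that $p_r^\top q_r \ge 0$ genuinely transfers from $\Delta x^\top \Delta s \ge 0$ given the diagonal rescaling (it does, since the rescaling is by the common positive diagonal $(\tilde{x}\pdot\tilde{s})^{1/2}$).
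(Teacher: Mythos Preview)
Your proposal is correct and follows essentially the same route as the paper's proof: derive $p_r+q_r=r$ by scaling \eqref{eqn_approximate_c}, use $p_r^\top q_r=\Delta x^\top\Delta s\ge 0$ to get $\|p_r\|^2+\|q_r\|^2\le\|r\|^2$ (hence \eqref{eqn_p_q_r_b} and \eqref{eqn_p_q_r_d}), bound $\|r\|$ via Lemma~\ref{lemma_scalar_rho} and \eqref{eqn_assumption_cen}, use $\|p_r\pdot q_r\|\le\|p_r\|\,\|q_r\|\le\tfrac12\|r\|^2$ for \eqref{eqn_p_q_r_c}, and factor $\frac{\Delta x}{x}=\frac{\tilde{x}}{x}\pdot(\tilde{x}\pdot\tilde{s})^{-1/2}\pdot p_r$ for \eqref{eqn_p_q_r_e}. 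The only cosmetic difference is that the paper obtains \eqref{eqn_p_q_r_d} from the already-established $\|p_r\|\,\|q_r\|\le\tfrac12\|r\|^2$ rather than from $(\|p_r\|+\|q_r\|)^2\le 2(\|p_r\|^2+\|q_r\|^2)$, but both are one-line consequences of the same ingredients.
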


\begin{proof}
By the definition of $r$, we have that
\begin{equation}
\begin{aligned}
    \|r\|&=\left\|(\tilde{x} \pdot \tilde{s})^{-{1}/{2}} \pdot (\tau e-x \pdot s) \right\|\\   
&\leq\|(\tilde{x} \pdot \tilde{s})^{-{1}/{2}}\|_\infty  \|x\pdot s-\tau e\|\\
&\quad\quad \text{(by Assumption \eqref{eqn_assumption_cen} of Theorem \ref{theorem_n_3_iteration})}\\
&\leq \|(\tilde{x} \pdot \tilde{s})^{-{1}/{2}}\|_\infty \cdot \alpha\tau  \qquad \text{(by Lemma \ref{lemma_scalar_rho}})\\
        &\leq (1+\delta) \alpha (1-\alpha)^{-{1}/{2}} \tau^{{1}/{2}}
    \end{aligned}
    \end{equation}
    which completes the proof of \eqref{eqn_p_q_r_a}.
    
Multiplying \eqref{eqn_approximate_c}  by $(\tilde{x}\pdot\tilde{s})^{-{1}/{2}}$ gives that
\begin{equation}
    p_r + q_r = (\tilde{x} \pdot \tilde{s})^{-{1}/{2}} \pdot (\tau e - x\pdot s) = r
\end{equation}
and, by \eqref{eqn_Delta_x_Delta_s_positive}, we have that
\[
    p_r^\top q_r = \Delta x^\top \Delta s\geq0.
\]
Then $\|r\|^2=\|p_r+q_r\|^2=\|p_r\|^2+2p_r^\top q_r + \|q_r\|^2\geq\|p_r\|^2+\|q_r\|^2$, which implies that $\|p_r\|\leq\|r\|$ and $\|q_r\|\leq\|r\|$, completing the proof of \eqref{eqn_p_q_r_b}.

Since $(\|p_r\|-\|q_r\|)^2 = \|p_r\|^2 - 2\|p_r\| \|q_r\| + \|q_r\|^2 \geq 0$, it holds that
\begin{equation}
\|p_r\| \|q_r\|\leq\frac{\|p_r\|^2+\|q_r\|^2}{2}\leq\frac{\|r\|^2}{2}.
\end{equation}
Thus,
\begin{equation}
       \|\Delta x \pdot \Delta s\| = \|p_r \pdot q_r\| \leq \|p_r\| \|q_r\| \leq \frac{\|r\|^2}{2},
\end{equation}
which completes the proof of \eqref{eqn_p_q_r_c}.
    
By $\|p_r\|^2+\|q_r\|^2\leq\|r\|^2$ and $\|p_r\| \|q_r\|\leq\frac{\|r\|^2}{2}$, we have that
\begin{equation}
\left(\|p_r\|+\|q_r\|\right)^2=\|p_r\|^2+\|q_r\|^2+2\|p_r\| \|q_r\| \leq 2\|r\|^2,
\end{equation}
That is,
$\|p_r\|+\|q_r\|\leq\sqrt{2}\|r\|$, which completes the proof of \eqref{eqn_p_q_r_d}.
    
Furthermore, we have that
\begin{equation}
\begin{aligned}
\left\|\frac{\Delta x}{x} \right\|&= \left \| \frac{\tilde{x}}{x} \frac{1}{\sqrt{\tilde{x} \pdot \tilde{s}}} \sqrt{\frac{\tilde{s}}{\tilde{x}}} \Delta x \right\| \\
&\leq\left\|\frac{\Tilde{x}}{x}\right\|_\infty  \left\|(\Tilde{x}\pdot\Tilde{s})^{-{1}/{2}} \right\|_\infty  \left\|p\right\|\\
&\quad\quad \text{(by Assumption (\ref{eqn_tilde_x_definition}), Lemma \ref{lemma_scalar_rho}, and  \eqref{eqn_p_q_r_b})} \\
&\leq(1+\delta)^2 (1-\alpha)^{-{1}/{2}} \tau^{-{1}/{2}}\|r\| \qquad \text{(by \eqref{eqn_p_q_r_a})} \\
&\leq\frac{(1+\delta)^3\alpha}{1-\alpha}=\eta.
\end{aligned}
\end{equation}
Similarly we have that $\left\|\frac{\Delta s}{s}\right\|\leq\eta$, which completes the proof of \eqref{eqn_p_q_r_e}.
\end{proof}

\begin{lemma}\label{lemma_bar_x_bar_s}
    Given the next iterate $(z^+,x^+,s^+)$ from \eqref{eqn_next_point}
    and under the assumptions of Theorem \ref{theorem_n_3_iteration}, then 
\begin{equation}
        \|x^+\pdot s^+-\tau e\|\leq\sigma\tau
\end{equation}
    where 
\begin{equation}
\sigma=\sqrt{2}\delta(1+\delta)^2\alpha\sqrt{\frac{1+\alpha}{1-\alpha}}+\frac{(1+\delta)^2\alpha^2}{2(1-\alpha)}.
\end{equation}
\end{lemma}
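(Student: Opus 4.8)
## Proof Proposal for Lemma~\ref{lemma_bar_x_bar_s}

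The plan is to expand $x^+ \pdot s^+ - \tau e$ using the update rule \eqref{eqn_next_point}, namely $x^+ = x + \Delta x$ and $s^+ = s + \Delta s$, which gives
\[
x^+ \pdot s^+ = x\pdot s + (s\pdot\Delta x + x\pdot\Delta s) + \Delta x\pdot\Delta s.
\]
The key observation is that the middle term is \emph{not} exactly $\tau e - x\pdot s$ (that would hold only for the exact Newton step), because \eqref{eqn_approximate_c} uses the approximations $\tilde{x},\tilde{s}$ rather than $x,s$. So I would write $s\pdot\Delta x + x\pdot\Delta s = (\tilde{s}\pdot\Delta x + \tilde{x}\pdot\Delta s) + (s-\tilde{s})\pdot\Delta x + (x-\tilde{x})\pdot\Delta s = (\tau e - x\pdot s) + (s-\tilde{s})\pdot\Delta x + (x-\tilde{x})\pdot\Delta s$, using \eqref{eqn_approximate_c}. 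Substituting back and cancelling $x\pdot s$ against $-x\pdot s$ yields
\[
x^+\pdot s^+ - \tau e = (s-\tilde{s})\pdot\Delta x + (x-\tilde{x})\pdot\Delta s + \Delta x\pdot\Delta s,
\]
so that $\|x^+\pdot s^+ - \tau e\| \le \|(s-\tilde{s})\pdot\Delta x\| + \|(x-\tilde{x})\pdot\Delta s\| + \|\Delta x\pdot\Delta s\|$ by the triangle inequality. The third term is already bounded by $\tfrac12\|r\|^2 \le \tfrac12 (1+\delta)^2\alpha^2(1-\alpha)^{-1}\tau$ by \eqref{eqn_p_q_r_c} and \eqref{eqn_p_q_r_a}, which matches the second summand in $\sigma$.

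For the first two (perturbation) terms I would rewrite them in the scaled coordinates $p_r = \sqrt{\tilde{s}/\tilde{x}}\,\Delta x$ and $q_r = \sqrt{\tilde{x}/\tilde{s}}\,\Delta s$ from Lemma~\ref{lemma_dx_ds_u}. Note $(s-\tilde s)\pdot\Delta x = (s-\tilde s)\sqrt{\tilde x/\tilde s}\pdot p_r$ and $(x-\tilde x)\pdot\Delta s = (x-\tilde x)\sqrt{\tilde s/\tilde x}\pdot q_r$, so each is bounded by an $\ell_\infty$ factor times $\|p_r\|$ or $\|q_r\|$. The $\ell_\infty$ factor is where the approximation parameter enters: from \eqref{eqn_tilde_s_definition}, $|s_i - \tilde s_i| = |1 - \tilde s_i/s_i|\, s_i \le \delta\, s_i$ (using that $\tilde s_i/s_i \in [(1+\delta)^{-1}, 1+\delta]$ implies $|1-\tilde s_i/s_i|\le\delta$), and then $|s_i - \tilde s_i|\sqrt{\tilde x_i/\tilde s_i} = |1 - \tilde s_i/s_i|\sqrt{\tilde x_i \tilde s_i}\cdot\frac{s_i}{\tilde s_i} \le \delta(1+\delta)\|(\tilde x\pdot\tilde s)^{1/2}\|_\infty$. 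Combining with Lemma~\ref{lemma_scalar_rho} gives the $\ell_\infty$ factor $\le \delta(1+\delta)^2(1+\alpha)^{1/2}\tau^{1/2}$, and multiplying by $\|p_r\|+\|q_r\| \le \sqrt2\|r\| \le \sqrt2(1+\delta)\alpha(1-\alpha)^{-1/2}\tau^{1/2}$ from \eqref{eqn_p_q_r_d} and \eqref{eqn_p_q_r_a} produces exactly $\sqrt2\,\delta(1+\delta)^2\alpha\sqrt{(1+\alpha)/(1-\alpha)}\,\tau$, the first summand of $\sigma$.

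The main obstacle is purely bookkeeping: getting the scaling factors $\sqrt{\tilde x/\tilde s}$ versus $\sqrt{\tilde s/\tilde x}$ attached to the right vector and tracking the powers of $(1+\delta)$ carefully, so that the two perturbation terms combine into a single clean bound rather than being estimated separately and loosely. I would handle this by treating the two terms symmetrically — bound $\|(s-\tilde s)\pdot\Delta x\|$ and $\|(x-\tilde x)\pdot\Delta s\|$ by the \emph{same} expression in terms of $\|p_r\|$ and $\|q_r\|$ respectively, then invoke \eqref{eqn_p_q_r_d} once at the end. There is no conceptual difficulty beyond that; the inequality $|1-t|\le\delta$ for $t\in[(1+\delta)^{-1},1+\delta]$ and the sub-multiplicativity of $\|\cdot\|_\infty$ over Hadamard products are the only tools needed, and all the numerical constants are already isolated in Lemmas~\ref{lemma_scalar_rho} and~\ref{lemma_dx_ds_u}.
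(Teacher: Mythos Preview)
Your approach is essentially identical to the paper's: the same expansion of $x^+\pdot s^+-\tau e$, the same rewriting of the perturbation terms via the scaled vectors $p_r,q_r$, and the same use of Lemmas~\ref{lemma_scalar_rho} and~\ref{lemma_dx_ds_u}. However, there is one bookkeeping slip that makes your final constant too large by a factor of $(1+\delta)$.

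You write $\dfrac{|s_i-\tilde s_i|}{\tilde s_i}=\Big|1-\dfrac{\tilde s_i}{s_i}\Big|\cdot\dfrac{s_i}{\tilde s_i}\le \delta(1+\delta)$, and then combine with $\|(\tilde x\pdot\tilde s)^{1/2}\|_\infty\le(1+\delta)(1+\alpha)^{1/2}\tau^{1/2}$ to get an $\ell_\infty$ factor $\delta(1+\delta)^2(1+\alpha)^{1/2}\tau^{1/2}$. Multiplying this by $\|p_r\|+\|q_r\|\le\sqrt2(1+\delta)\alpha(1-\alpha)^{-1/2}\tau^{1/2}$ actually yields $\sqrt2\,\delta(1+\delta)^{3}\alpha\sqrt{(1+\alpha)/(1-\alpha)}\,\tau$, not $(1+\delta)^2$ as you claim; you dropped one power of $(1+\delta)$ in the last multiplication. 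The fix is simply to bound $\dfrac{|s_i-\tilde s_i|}{\tilde s_i}=\Big|\dfrac{s_i}{\tilde s_i}-1\Big|\le\delta$ directly (since $s_i/\tilde s_i\in[(1+\delta)^{-1},1+\delta]$ as well), which is exactly what the paper does. With that one-line correction your argument goes through and matches $\sigma$ exactly.
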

\begin{proof}
First, by \eqref{eqn_next_point} and \eqref{eqn_approximate_c}, we have that
    \[
    \begin{aligned}
        &~x^+\pdot s^+ = x\pdot s + (s\pdot\Delta x+x\pdot\Delta s) + \Delta x\pdot \Delta s \\
        &= x\pdot s + (\Tilde{s}\pdot\Delta x + \Tilde{x}\pdot\Delta s) + (s-\Tilde{s})\pdot\Delta x + (x-\Tilde{x})\pdot\Delta s\\
        &\quad +\Delta x\Delta s\\
        &=\tau e + (s-\Tilde{s})\pdot\Delta x + (x-\Tilde{x})\pdot\Delta s+\Delta x\pdot\Delta s.
    \end{aligned}
    \]
Then we have that
    \[
    \begin{aligned}
        &\quad\|x^+\pdot s^+-\tau e\|\\
        &\leq\|(s-\Tilde{s})\pdot\Delta x + (x-\Tilde{x})\pdot\Delta s\| + \|\Delta x\pdot\Delta s\|\\
        &=\left\|\sqrt{\Tilde{x}\pdot\Tilde{s}}\left(\frac{s-\Tilde{s}}{\Tilde{s}}\sqrt{\frac{\Tilde{s}}{\Tilde{x}}}\Delta x + \frac{x-\Tilde{x}}{\Tilde{x}}\sqrt{\frac{\Tilde{x}}{\tilde{s}}}\Delta s\right) \right\| + \|\Delta x\pdot\Delta s\|\\
        &\leq\left\|\sqrt{\tilde{x}\pdot\tilde{s}}\right\|_\infty \left(\left\|\frac{s-\tilde{s}}{\tilde{s}}\right\|_\infty \left\|p_r\right\|  + \left\|\frac{x-\tilde{x}}{\tilde{x}}\right\|_\infty \left\|q_r\right\| \right)  \\
        &\quad + \|\Delta x\pdot\Delta s\|\\
        &\quad\text{(by Lemma \ref{lemma_scalar_rho}, Assumptions (\ref{eqn_tilde_x_definition}), (\ref{eqn_tilde_s_definition}), and  \eqref{eqn_p_q_r_b})}\\
    &\leq (1+\delta)(1+\alpha)^{{1}/{2}}\tau^{{1}/{2}}\left(\delta \left \|p_r\right\| + \delta\left\|q_r \right\|\right) + \frac{\|r\|^2}{2}\\
&\quad\text{(by \eqref{eqn_p_q_r_d})} \\
&\leq(1+\delta)(1+\alpha)^{{1}/{2}}\tau^{{1}/{2}} \delta \sqrt{2} \|r\|+ \frac{\|r\|^2}{2}\\
 &\quad\text{(by \eqref{eqn_p_q_r_a})}\\
&\leq\Bigg(\sqrt{2}\delta(1+\delta)^2\alpha\sqrt{\frac{1+\alpha}{1-\alpha}}+\frac{(1+\delta)^2\alpha^2}{2(1-\alpha)}\Bigg)\tau\\
&=\sigma\tau,
    \end{aligned}
    \]
    which completes the proof.
\end{proof}

Now, we are ready to provide the proof of Theorem \ref{theorem_n_3_iteration}.

\textbf{(1) Proof of \eqref{eqn_next_iterate_center}:} according to the choice of the approximation parameter $\delta$ in  \eqref{eqn_alpha_delta_beta} and by  \eqref{eqn_p_q_r_e}. we have that
\begin{equation}
\begin{aligned}
&\left\|\frac{\Delta x}{x}\right\|_\infty \leq \left\|\frac{\Delta x}{x}\right\|\leq\frac{(1+\delta)^3\alpha}{1-\alpha}<1,\\
&\left\|\frac{\Delta s}{s}\right\|_\infty \leq \left\|\frac{\Delta s}{s}\right\|\leq\frac{(1+\delta)^3\alpha}{1-\alpha}<1.
\end{aligned}
\end{equation}
That is, based on $(x,s)>0$, we have that
\begin{equation}
x^+=x+\Delta x>0,\quad s^+=s+\Delta s>0.
\end{equation}
Furthermore, by Lemma \ref{lemma_bar_x_bar_s} and Assumption \eqref{eqn_assumption_tau}, we have that
\begin{equation}\label{eqn_next_x_s_mu}
\begin{aligned}
\xi^+&\triangleq\frac{\left\|x^+\pdot s^+-\tau^+e\right\|}{\tau^+}\\
&=\frac{\left\|x^+\pdot s^+-\tau e + \tau e - \tau^+e \right\|}{\tau^+}\\
&\leq\frac{\left\| x^+\pdot s^+-\tau e\right\| + \left\|\tau^+e - \tau e\right\|}{\tau^+}\\
    &=\frac{\left\| x^+\pdot s^+-\tau e\right\| + \sqrt{2n}\left|\tau^+-\tau\right|}{\tau^+}\\
    &\quad\text{(by Lemma \ref{lemma_bar_x_bar_s} and  \eqref{eqn_assumption_tau})}\\
&\leq\frac{\sigma+\beta}{1-\frac{\beta}{\sqrt{2n}}}.
\end{aligned}    
\end{equation}
To make $\frac{\sigma+\beta}{1-\frac{\beta}{\sqrt{2n}}}=\alpha$, we choose $ \beta=\frac{\alpha-\sigma}{1+\frac{\alpha}{\sqrt{2n}}}>0$, which is  \eqref{eqn_alpha_delta_beta}, and we have that
\[
\xi^+\leq\alpha.
\]
which completes the proof of  \eqref{eqn_next_iterate_center} of Theorem \ref{theorem_n_3_iteration}. 

\textbf{(2) Proof of \eqref{eqn_number_iterations}:} let $\tau^k$ and $\tau^0$ denote the $k$th and initial value of $\tau$ ($\tau^0=1$), and according to  Assumption \eqref{eqn_assumption_tau}, we have that
\[
\tau^k=\left(1-\frac{\beta}{\sqrt{2n}}\right)^{\!\!k}\tau^0=\left(1-\frac{\beta}{\sqrt{2n}}\right)^{\!\!k}
\]
Let $(x^k,s^k)$ be the $k$th iterate
then the duality gap $(x^k)^\top s^k$ has
\[
\begin{aligned}
(x^k)^\top s^k&= e^\top (x^k\pdot s^k) = e^\top \left(\tau^k e+(x^k\pdot s^k-\tau^k e) \right)\\
&=2n\tau^k + e^\top (x^k\pdot s^k-\tau^k e).    
\end{aligned}
\]
By the Cauchy-Schwarz inequality, we have that
\[
|e^\top (x^k\pdot s^k-\tau^k e) |\leq\|e\|_2 \|x^k\pdot s^k-\tau^k e\|_2=\sqrt{2n} \|x^k\pdot s^k-\tau ^ke\|.
\]
Then, by the proof \eqref{eqn_next_iterate_center}, all iterates are positive and lie in the narrow neighborhood: $\|x^k\pdot s^k-\tau ^ke\|\leq\alpha\tau^k$. We have that
\[
2n\tau^k-\sqrt{2n}\alpha\tau^k\leq(x^k)^\top s^k\leq 2n\tau^k+\sqrt{2n}\alpha\tau^k,
\]
namely, that
\begin{equation}\label{eqn_iteration_inequality}
\begin{aligned}
&(2n-\alpha\sqrt{2n})\left(1-\frac{\beta}{\sqrt{2n}}\right)^{\!\!k}\leq(x^{k})^\top s^{k}\\
&\quad\quad\leq(2n+\alpha\sqrt{2n})\left(1-\frac{\beta}{\sqrt{2n}}\right)^{\!\!k}.  
\end{aligned}
\end{equation}
Hence, $(x^k)^\top s^k\leq\epsilon$ holds if
\begin{equation}
(2n+\alpha\sqrt{2n})\left(1-\frac{\beta}{\sqrt{2n}}\right)^{\!\!k}\leq\epsilon.
\end{equation}
Taking the logarithm of both sides gives that
\begin{equation}
k\log\!\left(1-\frac{\beta}{\sqrt{2n}} \right)+\log\!\left(2n+\alpha\sqrt{2n}\right)\leq\log(\epsilon),
\end{equation}
which holds if 
\begin{equation}
k\geq\frac{\log\left(\frac{\epsilon}{2n+\alpha\sqrt{2n}}\right)}{\log\left(1-\frac{\beta}{\sqrt{2n}}\right)}=\frac{\log\left(\frac{2n+\alpha\sqrt{2n}}{\epsilon}\right)}{-\log\left(1-\frac{\beta}{\sqrt{2n}}\right)}
\end{equation}
Thus, after at most
\[
\mathcal{N}_\mathrm{iter}=\!\left\lceil\frac{\log\left(\frac{2n+\alpha\sqrt{2n}}{\epsilon}\right)}{-\log\left(1-\frac{\beta}{\sqrt{2n}}\right)}\right\rceil
\]
iterations, $(x^0)^\top s^0\leq\epsilon$.

Furthermore, the above worst-case iteration complexity can be proved to be \textbf{exact} by using \eqref{eqn_iteration_inequality}. For the $(k-1)$th iteration,  we have that
\begin{equation}
\begin{aligned}
&(2n-\alpha\sqrt{2n})\left(1-\frac{\beta}{\sqrt{2n}}\right)^{\!\!k-1}\leq(x^{k-1})^\top s^{k-1}\\
&\quad\quad\leq(2n+\alpha\sqrt{2n})\left(1-\frac{\beta}{\sqrt{2n}}\right)^{\!\!k-1}.  
\end{aligned}
\end{equation}
The duality gap will reach the optimality level $\epsilon$ no earlier than and no later than $\mathcal{N}_\mathrm{iter}$ iterations if
\[
(2n-\alpha\sqrt{2n})\left(1-\frac{\beta}{\sqrt{2n}}\right)^{\!\!k}\leq(2n+\alpha\sqrt{2n})\left(1-\frac{\beta}{\sqrt{2n}}\right)^{\!\!k-1},
\]
that is,  
\[
(2n-\alpha\sqrt{2n})\left(1-\frac{\beta}{\sqrt{2n}}\right)\leq2n+\alpha\sqrt{2n}, \ \forall n=1,2,\cdots
\]
Substituting $\beta=\frac{\alpha-\sigma}{1+\frac{\alpha}{\sqrt{2n}}}$ into the above inequality, we need to prove that
\begin{equation}
(\sqrt{2n}-\alpha)(\sqrt{2n}+\sigma)\leq(\sqrt{2n}+\alpha)^2,
\end{equation}
that is, $\sigma\sqrt{2n}\leq3\alpha\sqrt{2n}+\alpha^2+\alpha\sigma$, which is always true as $\alpha^2+\alpha\sigma>0$ and we can prove $\sigma\leq3\alpha$ by $\sigma=\sqrt{2}\delta(1+\delta)^2\alpha\sqrt{\frac{1+\alpha}{1-\alpha}}+\frac{(1+\delta)^2\alpha^2}{2(1-\alpha)}$ with $\alpha=0.3, \delta=0.15$. This completes the proof of \eqref{eqn_number_iterations} of Theorem \ref{theorem_n_3_iteration}.

\subsection{Proof of Corollary \ref{corollary_n_35}}\label{sec_corollary_n_35}
\begin{proof}
Corollary \ref{corollary_n_35} is a special case of Theorem \ref{theorem_n_3_iteration} when $\delta=0$, thus their proof is similar. Taking $\delta=0$ can easily prove that
\begin{itemize}
\item[1)] \[
\left\|(x\pdot s)^{{1}/{2}} \right\|_\infty \leq (1-\alpha)^{-{1}/{2}} \tau^{-{1}/{2}}
        \]
\item[2)] Denote $v\triangleq(x\pdot s)^{-{1}/{2}} \pdot (\tau e-x\pdot s)$, then 
\[
\|v\|\leq\alpha(1-\alpha)^{-{1}/{2}}\tau^{{1}/{2}}
\]
\item[3)] Denote $p_v\triangleq\sqrt{\frac{s}{x}}\Delta x,~q_v\triangleq \sqrt{\frac{x}{s}}\Delta s$, then
\[
\|p_v\|\leq\|v\|,\quad \|q_v\|\leq\|v\|.
\]
\item[4)] \[
        \|\Delta x\pdot\Delta s\|\leq \frac{\|v\|^2}{2}
        \]
\item[5)]\[
        \|p_r\| + \|q_r\|\leq\sqrt{2}\|v\|
        \]
\item[6)] \[        \left\|\frac{\Delta x}{x}\right\|\leq\frac{\alpha}{1-\alpha},\quad \left\|\frac{\Delta s}{s}\right\|\leq\frac{\alpha}{1-\alpha}.
        \]
    \end{itemize}
Then we have that
    \[
    \begin{aligned}
\xi^+&=\frac{\left\|x^+\pdot s^+-\tau^+e \right\|}{\tau^+}\\ &=\frac{\left\|x\pdot s+x\pdot\Delta s+s\pdot\Delta x+\Delta x\pdot\Delta s-\tau^+e \right\|}{\tau^+}\\
&=\frac{\left\|\tau e+\Delta x\pdot\Delta s-\tau^+e \right\|}{\tau^+}\\
&\leq\frac{\|\Delta x\pdot\Delta s\|}{\tau^+} + \frac{\sqrt{2n}|\tau-\tau^+|}{\tau^+}\\
        &\leq \frac{\|v\|^2}{2\tau^+}+ \frac{\sqrt{2n}|\tau-\tau^+|}{\tau^+}\\
&\leq\frac{\sigma+\beta}{1-\frac{\beta}{\sqrt{2n}}},
\end{aligned}
    \]
where $\sigma=\frac{\alpha^2}{2(1-\alpha)}$. To make $\frac{\sigma+\beta}{1-\frac{\beta}{\sqrt{2n}}}=\alpha$, we choose $ \beta=\frac{\alpha-\sigma}{1+\frac{\alpha}{\sqrt{2n}}}>0.$
    
    The remaining proof is similar to the \textbf{Proof of \eqref{eqn_number_iterations}} in Appendix \ref{sec_theorem_sqrt_n}.
\end{proof}

\subsection{Proof of Theorem \ref{theorem_num_rank}}\label{sec_theorem_num_rank}
For the $k$th iteration of Algorithm \ref{alg_n_3}, the total number of rank-1 updates is bounded by 
\[
\sum_{k=1}^{\mathcal{N}} |I_{\gamma}^k\cup I_{\theta}^k\cup I_{\phi}^k\cup I_{\psi}^k|\leq\sum_{k=1}^{\mathcal{N}_\mathrm{iter}}I_{\gamma}^k+\sum_{k=1}^{\mathcal{N}_\mathrm{iter}} I_{\theta}^k+\sum_{k=1}^{\mathcal{N}_\mathrm{iter}} I_{\phi}^k+\sum_{k=1}^{\mathcal{N}_\mathrm{iter}} I_{\psi}^k
\]
where $\mathcal{N}_\mathrm{iter}$ is the total number of iterations from  \eqref{eqn_number_iterations}.

\begin{definition}
    For every element $i=1,\cdots{},n$ in $\gamma,\theta,\phi,\psi$, define the sets $\mathcal{K}_{\gamma_i},\mathcal{K}_{\theta_i},\mathcal{K}_{\phi_i},\mathcal{K}_{\psi_i}$, the subsequences of $\{1,\cdots{},\mathcal{N}_\mathrm{iter}\}$, such that:
    \[
    \begin{aligned}
    &\text{Let } \mathcal{K}_{\gamma_i}=\{1\}, \text{and } k\in\{2,\cdots{},\mathcal{N}_\mathrm{iter}\} \text{ to } \mathcal{K}_{\gamma_i} \text{ iff } i\in I_{\gamma}^k,\\
    &\text{Let } \mathcal{K}_{\theta_i}=\{1\}, \text{and } k\in\{2,\cdots{},\mathcal{N}_\mathrm{iter}\} \text{ to } \mathcal{K}_{\theta_i} \text{ iff } i\in I_{\theta}^k,\\
    &\text{Let } \mathcal{K}_{\phi_i}=\{1\}, \text{and } k\in\{2,\cdots{},\mathcal{N}_\mathrm{iter}\} \text{ to } \mathcal{K}_{\phi_i} \text{ iff } i\in I_{\phi}^k,\\
    &\text{Let } \mathcal{K}_{\psi_i}=\{1\}, \text{and } k\in\{2,\cdots{},\mathcal{N}_\mathrm{iter}\} \text{ to } \mathcal{K}_{\psi_i} \text{ iff } i\in I_{\psi}^k.
    \end{aligned}
    \]
\end{definition}
\begin{remark}
    By the definition of the sets $\mathcal{K}_{\gamma_i},\mathcal{K}_{\theta_i},\mathcal{K}_{\phi_i},\mathcal{K}_{\psi_i}$, summarizing the cardinality of the set $\mathcal{K}_{\gamma_i},\mathcal{K}_{\theta_i},\mathcal{K}_{\phi_i},\mathcal{K}_{\psi_i}$ for all elements $i=1,\cdots{},n$ in $\gamma,\theta,\phi,\psi$, we have that \begin{equation}\label{eqn_I_x}
    \begin{aligned}
&\sum_{i=1}^{n}\left|\mathcal{K}_{\gamma_i} \right|-n=\sum_{k=1}^{\mathcal{N}_\mathrm{iter}}\left|I_{\gamma}^k \right|,~\sum_{i=1}^{n}\left|\mathcal{K}_{\theta_i} \right|-n=\sum_{k=1}^{\mathcal{N}_\mathrm{iter}}\left|I_{\theta}^k \right|\\
&\sum_{i=1}^{n}\left|\mathcal{K}_{\phi_i} \right|-n=\sum_{k=1}^{\mathcal{N}_\mathrm{iter}}\left|I_{\phi}^k \right|,~\sum_{i=1}^{n}\left|\mathcal{K}_{\psi_i} \right|-n=\sum_{k=1}^{\mathcal{N}_\mathrm{iter}}\left|I_{\psi}^k \right| 
    \end{aligned}
    \end{equation}
    where the term ``$-n$" comes from the fact that, at the first iteration ($k=1$): $I_\gamma^1=\emptyset, I_{\theta}^1=\emptyset, I_{\phi}^1=\emptyset, I_{\psi}^1=\emptyset$ and all $\mathcal{K}_{\gamma_i},\mathcal{K}_{\theta_i},\mathcal{K}_{\phi_i},\mathcal{K}_{\psi_i}$ ($i=1,\cdots{},n$) already include the first iteration ($k=1$) case.
\end{remark}
\begin{lemma}\label{lemma_j_th_number_rank_1_update}
    For every $i=1,\cdots{},n$, we have that
\begin{subequations}
        \begin{align}
&|\mathcal{K}_{\gamma_i} |-1 \leq\frac{\sum_{k=1}^{\mathcal{N}_\mathrm{iter}-1}\left(\frac{|\gamma_i^{k+1}-\gamma_i^k|}{\gamma_i^k}\right)}{\left(1-\eta\right)\log(1+\delta)}\label{eqn_K_gamma}\\
&|\mathcal{K}_{\theta_i} |-1 \leq\frac{\sum_{k=1}^{\mathcal{N}_\mathrm{iter}-1}\left(\frac{|\theta_i^{k+1}-\theta_i^k|}{\theta_i^k}\right)}{\left(1-\eta\right)\log(1+\delta)}\label{eqn_K_theta}\\
&|\mathcal{K}_{\phi_i} |-1 \leq\frac{\sum_{k=1}^{\mathcal{N}_\mathrm{iter}-1}\left(\frac{|\phi_i^{k+1}-\phi_i^k|}{\phi_i^k}\right)}{\left(1-\eta\right)\log(1+\delta)}\label{eqn_K_phi}\\
&|\mathcal{K}_{\psi_i} |-1 \leq\frac{\sum_{k=1}^{\mathcal{N}_\mathrm{iter}-1}\left(\frac{|\psi_i^{k+1}-\psi_i^k|}{\psi_i^k}\right)}{\left(1-\eta\right)\log(1+\delta)}\label{eqn_K_psi}        \end{align}
    \end{subequations}
\end{lemma}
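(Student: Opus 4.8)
The plan is to charge each reset of a coordinate of $\tilde\gamma$ after the first — that is, each element of $\mathcal{K}_{\gamma_i}$ beyond the seed $k_1=1$ — to a geometric amount of drift of $\gamma_i$ by a factor of at least $1+\delta$, and then to control the accumulated drift by telescoping the per‑step relative increments, each of which is bounded by $\eta$ through Lemma \ref{lemma_dx_ds_u}. I will carry out the argument for $\mathcal{K}_{\gamma_i}$; the bounds \eqref{eqn_K_theta}--\eqref{eqn_K_psi} for $\mathcal{K}_{\theta_i},\mathcal{K}_{\phi_i},\mathcal{K}_{\psi_i}$ follow by the identical computation with $\theta,\phi,\psi$ (and the corresponding $x$ or $s$) in place of $\gamma$ (and $x$).

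First I would fix $i$ and enumerate $\mathcal{K}_{\gamma_i}=\{1=k_1<k_2<\cdots<k_m\}$ with $m=|\mathcal{K}_{\gamma_i}|$. By the update rule \eqref{eqn_update_rule_approximate_x_s}--\eqref{eqn_tilde_gamma_theta_phi_psi_update}, $\tilde\gamma_i$ is overwritten by the current $\gamma_i$ exactly at iterations $k_1,\dots,k_m$ and is frozen in between, so the value of $\tilde\gamma_i$ failing the membership test at iteration $k_{\ell+1}$ is $\gamma_i^{k_\ell}$, and $i\in I_\gamma^{k_{\ell+1}}$ (see \eqref{eqn_sets_I_gamma_theta_phi_psi}) means precisely $\gamma_i^{k_\ell}/\gamma_i^{k_{\ell+1}}\notin[\tfrac1{1+\delta},1+\delta]$, i.e. $|\log\gamma_i^{k_{\ell+1}}-\log\gamma_i^{k_\ell}|>\log(1+\delta)$ (all $\gamma_i^j>0$ since $(x,s)>0$ along the iterates by Theorem \ref{theorem_n_3_iteration}). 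Writing $\gamma_i^{j+1}=\gamma_i^j+\Delta\gamma_i^j$ for the full step of Algorithm \ref{alg_n_3} and telescoping, $\log\gamma_i^{k_{\ell+1}}-\log\gamma_i^{k_\ell}=\sum_{j=k_\ell}^{k_{\ell+1}-1}\log(1+\Delta\gamma_i^j/\gamma_i^j)$; since the hypotheses of Theorem \ref{theorem_n_3_iteration} hold at every iterate (\eqref{eqn_assumption_cen} by its invariance proof, \eqref{eqn_tilde_x_definition}--\eqref{eqn_tilde_s_definition} directly from the update rule), bound \eqref{eqn_p_q_r_e} gives $|\Delta\gamma_i^j|/\gamma_i^j\le\|\Delta x/x\|\le\eta<1$ at each $j$, where $\eta=\tfrac{(1+\delta)^3\alpha}{1-\alpha}=\mu\in(0,1)$ by \eqref{eqn_alpha_delta_beta}. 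Combining the triangle inequality with the elementary estimate $|\log(1+t)|\le|t|/(1-\eta)$ valid for $|t|\le\eta<1$ then yields
\begin{equation*}
\log(1+\delta)<\frac1{1-\eta}\sum_{j=k_\ell}^{k_{\ell+1}-1}\frac{|\gamma_i^{j+1}-\gamma_i^j|}{\gamma_i^j}.
\end{equation*}

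Finally I would sum over $\ell=1,\dots,m-1$: because $k_1=1$ and the blocks $[k_\ell,k_{\ell+1})$ are pairwise disjoint and contained in $\{1,\dots,\mathcal{N}_\mathrm{iter}-1\}$, the summed right‑hand side is at most $\tfrac1{1-\eta}\sum_{j=1}^{\mathcal{N}_\mathrm{iter}-1}|\gamma_i^{j+1}-\gamma_i^j|/\gamma_i^j$, so $(m-1)\log(1+\delta)$ is bounded by this quantity, and dividing by $\log(1+\delta)$ gives \eqref{eqn_K_gamma}; the same computation with $\|\Delta s/s\|\le\eta$ from \eqref{eqn_p_q_r_e} produces \eqref{eqn_K_theta}--\eqref{eqn_K_psi}. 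The only delicate point is the bookkeeping in the opening step: pinning down, against the precise order of operations in Algorithm \ref{alg_n_3} (update $\tilde\gamma$, then $M$, then the iterate, then $\tau$), that $\tilde\gamma_i$ is constant and equal to $\gamma_i^{k_\ell}$ throughout $[k_\ell,k_{\ell+1})$, and that the seed $k_1=1$ (with $I_\gamma^1=\emptyset$) is correctly accounted for; the analytic estimates are routine.
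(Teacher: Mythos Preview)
Your proposal is correct and follows essentially the same route as the paper: enumerate consecutive reset times $k_\ell<k_{\ell+1}$ in $\mathcal{K}_{\gamma_i}$, observe that $|\log\gamma_i^{k_{\ell+1}}-\log\gamma_i^{k_\ell}|>\log(1+\delta)$, telescope this log-gap over the intermediate steps, and absorb the per-step relative increments (bounded by $\eta$ via \eqref{eqn_p_q_r_e}) with a $1/(1-\eta)$ factor before summing over $\ell$. The only cosmetic difference is that the paper writes the log-gap as $|\int_{\gamma_i^{k_1}}^{\gamma_i^{k_2}}\tfrac{1}{\gamma}\,d\gamma|$ and bounds it by $\sum\max\{|\Delta\gamma_i^k|/\gamma_i^{k+1},|\Delta\gamma_i^k|/\gamma_i^k\}$ together with $\gamma_i^k/\gamma_i^{k+1}\le 1/(1-\eta)$, whereas you telescope $\sum\log(1+\Delta\gamma_i^j/\gamma_i^j)$ and invoke $|\log(1+t)|\le|t|/(1-\eta)$ for $|t|\le\eta$; these are the same estimate in different dress. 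One small slip: $\theta$ is a component of $x$, not $s$, so \eqref{eqn_K_theta} also comes from $\|\Delta x/x\|\le\eta$, while \eqref{eqn_K_phi}--\eqref{eqn_K_psi} use $\|\Delta s/s\|\le\eta$.
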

\begin{proof}
By \eqref{eqn_p_q_r_e} in Lemma \ref{lemma_dx_ds_u}, we have that
\begin{equation}
\left\|\frac{\Delta x}{x}\right\|_\infty \leq \left\|\frac{\Delta x}{x}\right\|\leq\eta,~\left\|\frac{\Delta s}{s}\right\|_\infty \leq \left\|\frac{\Delta s}{s}\right\|\leq\eta.
\end{equation}
Moreover, $(x,s)$ stays positive during the whole iterations, so for all $i=1,\cdots{},n$, we have that, $\forall k=1,\cdots{},\mathcal{N}_\mathrm{iter}-1$,
\begin{equation}
\begin{aligned}
&\frac{|\gamma_i^{k+1}-\gamma_i^{k}|}{\gamma_i^k}\leq\eta,
\quad \frac{|\theta_i^{k+1}-\theta_i^{k}|}{\theta_i^k}\leq\eta,\\
&\frac{|\phi_i^{k+1}-\phi_i^{k}|}{\phi_i^k}\leq\eta, \quad \frac{|\psi_i^{k+1}-\psi_i^{k}|}{\psi_i^k}\leq\eta.
\end{aligned}
\end{equation}
These inequalities directly lead to, $\forall k=1,\cdots{},\mathcal{N}_\mathrm{iter}-1,$
\begin{equation}
    \begin{aligned}
    &(1-\eta)\gamma_i^k\leq \gamma_i^{k+1}\leq(1+\eta)\gamma_i^k,\\
    &(1-\eta)\theta_i^k\leq \theta_i^{k+1}\leq(1+\eta)\theta_i^k,\\ 
    &(1-\eta)\phi_i^k\leq \phi_i^{k+1}\leq(1+\eta)\phi_i^k,\\ 
    &(1-\eta)\psi_i^k\leq \psi_i^{k+1}\leq(1+\eta)\psi_i^k,
    \end{aligned}
\end{equation}

Thus, for all $i=1,\cdots{},n$, we have that, $\forall k=1,\cdots,\mathcal{N}_\mathrm{iter}-1$,
\begin{equation}\label{eqn_x_j_k}
    \begin{aligned}
    \frac{|\gamma_i^{k+1}-\gamma_i^{k}|}{\gamma_i^{k+1}}&=\frac{\gamma_i^k}{\gamma_i^{k+1}}\frac{|\gamma_i^{k+1}-\gamma_i^{k}|}{\gamma_i^{k}}\leq\frac{1}{1-\eta}\frac{|\gamma_i^{k+1}-\gamma_i^{k}|}{\gamma_i^k},\\
     \frac{|\theta_i^{k+1}-\theta_i^{k}|}{\theta_i^{k+1}}&=\frac{\theta_i^k}{\theta_i^{k+1}}\frac{|\theta_i^{k+1}-\theta_i^{k}|}{\theta_i^{k}}\leq\frac{1}{1-\eta}\frac{|\theta_i^{k+1}-\theta_i^{k}|}{\theta_i^k},\\
    \frac{|\phi_i^{k+1}-\phi_i^{k}|}{\phi_i^{k+1}}&=\frac{\phi_i^k}{\phi_i^{k+1}}\frac{|\phi_i^{k+1}-\phi_i^{k}|}{\phi_i^{k}}\leq\frac{1}{1-\eta}\frac{|\phi_i^{k+1}-\phi_i^{k}|}{\phi_i^k},\\
    \frac{|\psi_i^{k+1}-\psi_i^{k}|}{\psi_i^{k+1}}&=\frac{\psi_i^k}{\psi_i^{k+1}}\frac{|\psi_i^{k+1}-\psi_i^{k}|}{\psi_i^{k}}\leq\frac{1}{1-\eta}\frac{|\psi_i^{k+1}-\psi_i^{k}|}{\psi_i^k},  
    \end{aligned}
\end{equation}

Taking the proof of \eqref{eqn_K_gamma} related to $\mathcal{K}_{\gamma_i}$ as an example, let $k_1$ and $k_2$ be every pair of consecutive indices in the set $\mathcal{K}_{\gamma_i}$. By the definition of the set $\mathcal{K}_{\gamma_i}$, we have that
\begin{equation}
\frac{\gamma_i^{k_2}}{\gamma_i^{k_1}}\notin \left[\frac{1}{1+\delta},1+\delta \right],
\end{equation}
    that is,  $1+\delta<\frac{\gamma_i^{k_2}}{\gamma_i^{k_1}}$. Taking logarithms gives that
\begin{equation}
    \begin{aligned}
        \log(1+\delta)&\leq |\log(\gamma_i^{k_2})-\log(\gamma_i^{k_1})|\\
        &= \left|\int_{\gamma_i^{k_1}}^{\gamma_i^{k_2}}\frac{1}{\gamma}d\gamma \right|\\
        &\leq \sum_{k=k_1}^{k_2-1}\max\left\{ \frac{|\gamma_i^{k+1}-\gamma_i^k|}{\gamma_i^{k+1}}, \frac{|\gamma_i^{k+1}-x_i^k|}{\gamma_i^k}\right\}\\
         &\quad\text{(by inequality \eqref{eqn_x_j_k}) )}\\
        &\leq\sum_{k=k_1}^{k_2-1}\frac{1}{1-\eta}\left(\frac{|\gamma_i^{k+1}-\gamma_i^k|}{\gamma_i^k}\right).
    \end{aligned}
\end{equation}
    As there are $(|\mathcal{K}_{\gamma_i}|-1)$ pairs of consecutive indices for such as $k_1$ and $k_2$, then taking the summation of those inequalities, we have that
    \[
    (|\mathcal{K}_{\gamma_i}|-1)\log(1+\delta)\leq\frac{1}{(1-\eta)}\sum_{k=1}^{\mathcal{N}_\mathrm{iter}-1}\left(\frac{|\gamma_i^{k+1}-\gamma_i^k|}{\gamma_i^k}\right)
    \]
     which completes the proof of \eqref{eqn_K_gamma}. The proofs of \eqref{eqn_K_theta}, \eqref{eqn_K_phi}, and \eqref{eqn_K_psi} are similar.
\end{proof}

Now, it is ready to present the proof of Theorem \ref{theorem_num_rank}. Taking the proof of the bound of $\sum_{k=1}^{\mathcal{N}}I_{\gamma}^k$ as an example, applying the Cauchy–Schwarz inequality for vectors $\left| \frac{\gamma^{k+1}-\gamma^k}{\gamma^k}\right|$ and $e=(1,\cdots,1)\in\rr^{n}$ results in
\begin{equation}
\begin{aligned}
\sum_{i=1}^{n}\left|\frac{\gamma_i^{k+1}-\gamma_i^k}{\gamma_i^k}\right|\cdot 1&\leq\sqrt{\left(\sum_{i=1}^{n} \left| \frac{\gamma_i^{k+1}-\gamma_i^k}{\gamma_i^k}\right|^2\right)}\cdot\sqrt{\sum_{i=1}^{n} 1^2}\\
&=\sqrt{n}\left\|\frac{\gamma^{k+1}-\gamma^k}{\gamma^k}\right\|.
\end{aligned}
\end{equation}
By Lemma \ref{lemma_j_th_number_rank_1_update}, taking the summation of all inequalities \eqref{eqn_K_gamma}, we have that
\begin{equation}
\begin{aligned}
\sum_{i=1}^{n}\left|\mathcal{K}_{\gamma_i} \right| -n &\leq\frac{\sum_{i=1}^{n}\sum_{k=1}^{\mathcal{N}_\mathrm{iter}-1}\left(\frac{|\gamma_i^{k+1}-\gamma_i^k|}{\gamma_i^k}\right)}{\left(1-\eta\right)\log(1+\delta)}   \\
&\leq\frac{\sum_{k=1}^{\mathcal{N}_\mathrm{iter}-1}\sqrt{n}\left\|\frac{\gamma^{k+1}-\gamma^k}{\gamma^k}\right\|}{\left(1-\eta\right)\log(1+\delta)}
\end{aligned}
\end{equation}
Eqn.\  \eqref{eqn_p_q_r_e} in Lemma \ref{lemma_dx_ds_u}} can be written as
\begin{equation}
\begin{aligned}
\sqrt{\sum_{i=1}^{n} \left| \frac{\gamma_i^{k+1}-\gamma_i^k}{\gamma_i^k}\right|^2 + \sum_{i=1}^{n} \left| \frac{\theta_i^{k+1}-\theta_i^k}{\theta_i^k}\right|^2}\leq\eta,\\  
\sqrt{\sum_{i=1}^{n} \left| \frac{\phi_i^{k+1}-\phi_i^k}{\phi_i^k}\right|^2 + \sum_{i=1}^{n} \left| \frac{\psi_i^{k+1}-\psi_i^k}{\psi_i^k}\right|^2}\leq\eta.
\end{aligned}
\end{equation}
That is, we have that
\begin{equation}
\begin{aligned}
    &\left\|\frac{\gamma^{k+1}-\gamma^k}{\gamma^k}\right\|=\sqrt{\sum_{i=1}^{n} \left| \frac{\gamma_i^{k+1}-\gamma_i^k}{\gamma_i^k}\right|^2}\leq\eta\\
    &\left\|\frac{\theta^{k+1}-\theta^k}{\theta^k}\right\|=\sqrt{\sum_{i=1}^{n} \left| \frac{\theta_i^{k+1}-\theta_i^k}{\theta_i^k}\right|^2}\leq\eta\\
    &\left\|\frac{\phi^{k+1}-\phi^k}{\phi^k}\right\|=\sqrt{\sum_{i=1}^{n} \left| \frac{\phi_i^{k+1}-\phi_i^k}{\phi_i^k}\right|^2}\leq\eta\\
    &\left\|\frac{\psi^{k+1}-\psi^k}{\psi^k}\right\|=\sqrt{\sum_{i=1}^{n} \left| \frac{\psi_i^{k+1}-\psi_i^k}{\psi_i^k}\right|^2}\leq\eta
\end{aligned}
\end{equation}
Finally, by \eqref{eqn_I_x}, we have that
\[
\sum_{k=1}^{\mathcal{N}_\mathrm{iter}}\left|I_\gamma^k \right| = \sum_{i=1}^{n}\left|\mathcal{K}_{\gamma_i} \right|-n\leq\frac{\eta(\mathcal{N}_\mathrm{iter}-1)\sqrt{n}}{\left(1-\eta\right)\log(1+\delta)}.
\]
Similarly, we can prove that
\begin{equation}
\begin{aligned}
&\sum_{k=1}^{\mathcal{N}_\mathrm{iter}}\left|I_\theta^k \right|\leq\frac{\eta(\mathcal{N}_\mathrm{iter}-1)\sqrt{n}}{\left(1-\eta\right)\log(1+\delta)},\\
&\sum_{k=1}^{\mathcal{N}_\mathrm{iter}}\left|I_\phi^k \right|\leq\frac{\eta(\mathcal{N}_\mathrm{iter}-1)\sqrt{n}}{\left(1-\eta\right)\log(1+\delta)},\\
&\sum_{k=1}^{\mathcal{N}_\mathrm{iter}}\left|I_\psi^k \right|\leq\frac{\eta(\mathcal{N}_\mathrm{iter}-1)\sqrt{n}}{\left(1-\eta\right)\log(1+\delta)}.
\end{aligned}
\end{equation}
Thus the total number of rank-1 updates is bounded as
\[
\begin{aligned}
    \mathcal{N}_{
    \mathrm{rank}-1}&=\sum_{k=1}^{\mathcal{N}_\mathrm{iter}} |I_{\gamma}^k\cup I_{\theta}^k\cup I_{\phi}^k\cup I_{\psi}^k|\\
    &\leq\sum_{k=1}^{\mathcal{N}_\mathrm{iter}} I_{\gamma}^k +\sum_{k=1}^{\mathcal{N}_\mathrm{iter}} I_{\theta}^k +\sum_{k=1}^{\mathcal{N}_\mathrm{iter}} I_{\phi}^k+\sum_{k=1}^{\mathcal{N}_\mathrm{iter}} I_{\psi}^k\\
    &\leq\frac{4\eta(\mathcal{N}_\mathrm{iter}-1)\sqrt{n}}{\left(1-\eta\right)\log(1+\delta)}\\
    &\leq\left\lceil\frac{4\eta(\mathcal{N}_\mathrm{iter}-1)\sqrt{n}}{\left(1-\eta\right)\log(1+\delta)}\right\rceil,
\end{aligned}
\]
which completes the proof of Theorem \ref{theorem_num_rank}.

\subsection{SVM and SVR to Box-QP}\label{sec_SVM_BoxQP}
Both SVM and SVR can be equivalently transformed into a Box-QP. This subsection takes SVM as an example to illustrate this transformation. 

Given a set of instance-label paris $(x_i,y_i),~i=1,2,\cdots{},m,~x_i\in\mathbb{R}^n,~y_i\in\left\{-1,+1\right\}$, SVM usually first maps training vectors into a high-dimensional feature space via a nonlinear function $\varphi=\varphi(x)$ and then aims to find a hyperplane that separates the points $x_i$ into two groups: one where $y_i=1$, and another where $y_i=-1$:
\begin{equation}
\textrm{find}~w,b~\text{s.t.}~y_i(\varphi_i^\top w+b)\geq1.
\end{equation}
The bias term $b$ is often dealt with by adding an extra dimension to each instance,
\begin{equation}
\varphi_i\leftarrow \mathrm{col}(\varphi_i, 1),\quad  w\leftarrow \mathrm{col}(w, b).
\end{equation}
To address the problems of nonseparable datasets, as well as sensitivity to outliers, the key is to soften the constraints by introducing the slack variables $\zeta$:
\begin{equation}
\textrm{find}~w ~\text{s.t.}~y_i(\varphi_i^\top w)\geq1-\zeta_i,\ \zeta_i\geq0.
\end{equation}
Then, a soft-margin SVM problem is 
\begin{equation}\label{problem_SVM}
    \begin{aligned}
    \min&~\frac{1}{2}w^\top w+ \rho e^\top \zeta\\
    \text{s.t.}&~y_i(\varphi_i^\top w)\geq1-\zeta_i,~\zeta_i\geq0,\ i=1,\cdots{},m.
\end{aligned}
\end{equation}
where $\rho>0$ is a regularization parameter that balances the trade-off between model complexity and training error.
\begin{lemma}
    The optimal solution $w^*$ of soft-margin SVM \eqref{problem_SVM} can be recovered with
    \[
    w^*=\sum_{i=1}^mz_i^*y_i\varphi_i
    \]
    where $z^*$ is the optimal solution of the  Box-QP: \begin{equation}\label{eqn_SVM_BoxQP}
    \begin{aligned}
    \min_{z\in\rr^m}&~\frac{1}{2}\sum_{i=1}^mz_iy_i\varphi_i^\top \sum_{i=1}^mz_iy_i\varphi_i -\sum_{i=1}^mz_i\\
    \mathrm{s.t. }&~0\leq z \leq \rho
    \end{aligned}
    \end{equation}
\end{lemma}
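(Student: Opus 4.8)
The plan is to obtain the Box-QP \eqref{eqn_SVM_BoxQP} as the Lagrangian dual of \eqref{problem_SVM}, mirroring the Lasso-to-Box-QP derivation above. First I would attach multipliers $z_i\ge 0$ to the margin constraints $1-\zeta_i-y_i\varphi_i^\top w\le 0$ and $\lambda_i\ge 0$ to $-\zeta_i\le 0$, forming
\[
\mathcal{L}(w,\zeta,z,\lambda)=\tfrac{1}{2}w^\top w+\rho e^\top\zeta+\sum_{i=1}^m z_i\bigl(1-\zeta_i-y_i\varphi_i^\top w\bigr)-\sum_{i=1}^m\lambda_i\zeta_i.
\]
Minimizing over $w$ is unconstrained and strictly convex, with stationarity $\nabla_w\mathcal{L}=w-\sum_i z_i y_i\varphi_i=0$, which already yields the claimed recovery $w^*=\sum_i z_i^* y_i\varphi_i$. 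Minimizing over $\zeta$ is the delicate step: $\mathcal{L}$ is \emph{linear} in each $\zeta_i$ with coefficient $\rho-z_i-\lambda_i$, so the partial infimum is $-\infty$ unless $\rho-z_i-\lambda_i=0$; imposing this and using $\lambda_i\ge 0$ forces $z_i\le\rho$, which, combined with $z_i\ge0$, produces exactly the box $0\le z\le\rho$ and simultaneously cancels all $\zeta$-terms from $\mathcal{L}$.

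Next I would substitute $w=\sum_i z_i y_i\varphi_i$ back into $\mathcal{L}$ to get the dual function $\mathcal{D}(z)=\sum_{i=1}^m z_i-\tfrac{1}{2}\bigl\|\sum_{i=1}^m z_i y_i\varphi_i\bigr\|_2^2$ on $\{0\le z\le\rho\}$, and observe that maximizing $\mathcal{D}$ is the same as the minimization in \eqref{eqn_SVM_BoxQP}. Its Hessian $H_{ij}=y_i y_j\varphi_i^\top\varphi_j$ is a Gram matrix, hence $H\succeq0$, so \eqref{eqn_SVM_BoxQP} is a bona fide convex Box-QP of the form \eqref{eqn_box_QP} after the affine rescaling of $0\le z\le\rho$ to $-e\le z\le e$ already noted in Section \ref{sec_BoxQP}.

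Finally I would close the loop with strong duality: \eqref{problem_SVM} is convex with only affine inequality constraints and is trivially feasible (take $w=0$, $\zeta_i=2$), so there is no duality gap and a primal optimizer is attained at the minimizer of $\mathcal{L}(\cdot,\cdot,z^*,\lambda^*)$; since $\mathcal{L}$ is strictly convex in $w$, this minimizer is unique and equals $\sum_i z_i^* y_i\varphi_i$, confirming the recovery formula. The only real obstacle is the $\zeta$-minimization just described --- handling an inner infimum that is $-\infty$ off a hyperplane and arguing that restricting to that hyperplane is lossless --- while everything else is a routine dual computation structurally identical to the Lasso case; the SVR variant follows the same template with two-sided slacks.
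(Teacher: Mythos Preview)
Your proposal is correct and follows essentially the same Lagrangian-duality route as the paper: the paper writes down the KKT conditions of \eqref{problem_SVM} (stationarity in $w$ gives $w=\sum_i z_i y_i\varphi_i$, stationarity in $\zeta$ gives $v+z=\rho e$ with $v\ge0$, hence $0\le z\le\rho$) and then observes that, after eliminating $w$, these coincide with the KKT conditions of the Box-QP \eqref{eqn_SVM_BoxQP}. Your derivation of the dual function and explicit appeal to strong duality via Slater is a slightly more fleshed-out presentation of the same argument, and your handling of the $\zeta$-minimization (linear coefficient forced to zero) is exactly the mechanism behind the paper's $v+z=\rho e$, $v\ge0$ line.
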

\begin{proof}
According to \cite[Ch 5]{boyd2004convex}, the Karush–Kuhn–Tucker (KKT) condition of soft-margin SVM \eqref{problem_SVM} is
\begin{equation}
\begin{aligned}
    &w - \sum_{i=1}^mz_iy_i\varphi_i=0,\\
    &v + z = \rho e, \\
    &v\pdot \zeta=0,\\
    &v\geq0,~\zeta\geq0,\\
    &z_i \cdot  \left[y_i(\varphi_i^\top w)-1+\zeta_i\right]=0,i=1,\cdots,m,\\
    &z_i\geq0,~y_i(\varphi_i^\top w)-1+\zeta_i\geq0,i=1,\cdots,m,
\end{aligned}
\end{equation}
where $z,v$ denote the Lagrangian dual variables for the inequality constraint $y_i(\varphi_i^\top w)-1+\zeta_i=0,~\zeta_i\geq0~(i=1,\cdots,m)$, respectively. 

By eliminating $w$ with $w=\sum_{i=1}^mz_iy_i\varphi_i$, the KKT conditions of the soft-margin SVM \eqref{problem_SVM} and the Box-QP \eqref{eqn_SVM_BoxQP} are the same, which completes the proof.
\end{proof}
The Box-QP \eqref{eqn_SVM_BoxQP} can be easily scaled to Box-QP \eqref{eqn_box_QP}. The feature space can blow up in size and even be infinite-dimensional. A kernel function $k(x_i,x_j)$ can express a dot product in a (possibly infinite-dimensional) feature space (Mercer’s theorem), that is, $\varphi_i^\top \varphi_j=\varphi(x_i)^\top\varphi(x_j)=k(x_i,x_j)$, to save computations. Therefore, the prediction classifier is $y(x)=\sum_{i=1}^mz_i^*y_ik(x,x_i)$.

\bibliographystyle{IEEEtran}
\bibliography{ref} 

\begin{IEEEbiography}[{\includegraphics[width=1in,height=1.25in,clip,keepaspectratio]{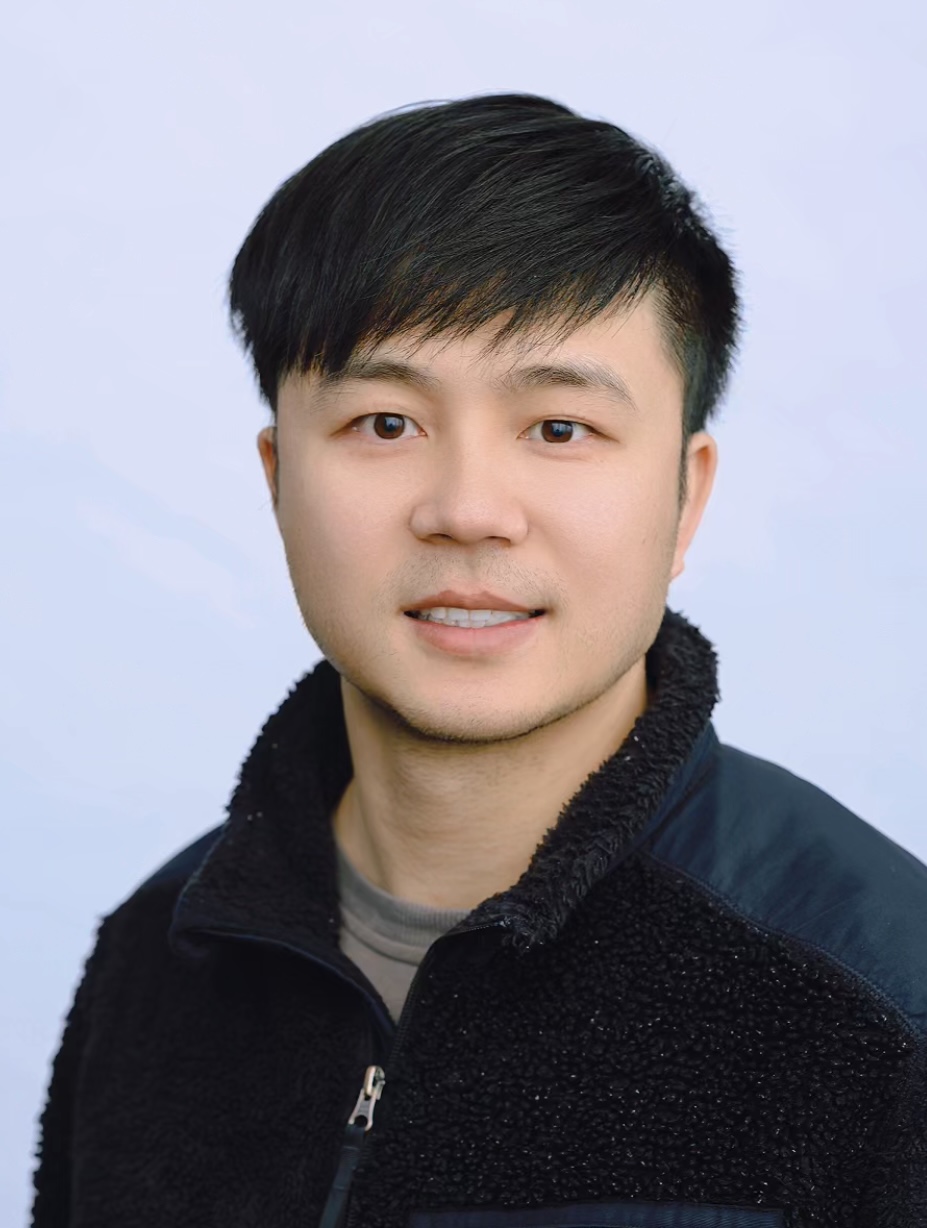}}]{Liang Wu}
received a B.E. and M.E. in chemical engineering, majoring in computational fluid dynamics and process modeling. He then turned his research interest to model predictive control and received his Ph.D. in March 2023 from the IMT School for Advanced Studies Lucca supervised by Prof.\ Alberto Bemporad. 

He is now a postdoctoral associate at the Massachusetts Institute of Technology (MIT) supervised by Prof.\  Richard D. Braatz since March 27, 2023. His research interests include quadratic programming, model predictive control, convex nonlinear programming, and control of PDE systems.
\end{IEEEbiography}

\begin{IEEEbiography}[{\includegraphics[width=1in,height=1.25in,clip,keepaspectratio]{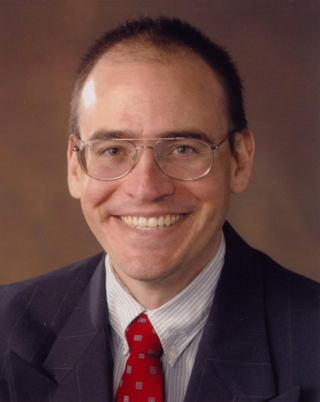}}]{Richard D. Braatz}
is the Edwin R. Gilliland Professor at the Massachusetts Institute of Technology (MIT). He received an M.S. and
Ph.D. from the California Institute of Technology. He is a past Editor-in-Chief of the IEEE Control Systems Magazine, Past President of the American Automatic Control Council, a Vice President of the International Federation of Automatic Control, and was General Co-Chair of the 2020 IEEE Conference on Decision and Control. He received the AACC Donald P. Eckman Award, the Antonio Ruberti Young Researcher
Prize, and best paper awards from IEEE- and IFAC-sponsored control journals. He is a Fellow of IEEE and IFAC and a member
of the U.S. National Academy of Engineering.
\end{IEEEbiography}

\end{document}